\numberwithin{equation}{section}
\newtheoremstyle{plainsl}%
	{\topsep}
	{\topsep}
	{\slshape} 
	{}
	{\normalfont\bfseries}
	{.}
	{ }
	{}
\theoremstyle{plainsl}
\newtheorem{theorem}{Theorem}[section]
\newtheorem{lemma}[theorem]{Lemma}
\newtheorem{corollary}[theorem]{Corollary}
\renewenvironment{proof}{\noindent{{\sl Proof. }}}{\medbreak}
\def\sqr#1#2{{\vbox{\hrule height.#2pt
    \hbox{\vrule width.#2pt height#1pt \kern#1pt
        \vrule width.#2pt}\hrule height.#2pt}}}
\def\eqed{\tag*{\sqr53}}
\def\qed{%
    \ifmmode\eqno\eqed
    \else\nobreak\ \hfill\sqr53\medbreak\fi}
\newcommand\defn[1]{\textsl{#1}}
\newcommand\al{\alpha}
\newcommand\be{\beta}
\newcommand\ga{\gamma}
\newcommand\la{\lambda}
\renewcommand\th{\theta} 
\newcommand\de{\delta}
\newcommand\ka{\kappa}
\newcommand\om{\omega}
\newcommand\sg{\sigma}
\newcommand\ints{{\mathbb Z}}
\newcommand\sbs{\subseteq}
\newcommand\cx{{\mathbb C}}
\newcommand\re{{\mathbb R}}
\newcommand\gras[2]{\mathcal{G}_{#1,#2}}
\newcommand\ff[1]{{\mathbb F}_{#1}}
\newcommand\zz[1]{{\mathbb Z}_{#1}}
\newcommand\ip[2]{\left\langle#1,#2\right\rangle}
\newcommand\floor[1]{\left\lfloor#1\right\rfloor}
\newcommand\smallsbinom[2]{\left[\begin{smallmatrix}#1\\#2\end{smallmatrix}\right]}
\newcommand\sbinom[2]{\left[\begin{matrix}#1\\#2\end{matrix}\right]}
\newcommand\scr[1]{{\mathcal#1}}
\newcommand\abs[1]{\left|#1\right|}
\newcommand\conj[1]{\overline{#1}}
\newcommand\opk[1]{\mathop{\mathrm{#1}}\nolimits}
\newcommand\tr{\opk{tr}}
\newcommand\diag{\opk{diag}}
\newcommand\Harm{\opk{Harm}}
\newcommand\len{\opk{len}}
\newcommand\ann{\opk{ann}}
\newcommand\Hom{\opk{Hom}}
\begin{document}

\title{Bounds for codes and designs in complex subspaces}
\author{Aidan Roy\footnote{email:aroy@qis.ucalgary.ca} \\
Institute for Quantum Information Science, University of Calgary\\
Calgary, Alberta T2N 1N4, Canada}

\maketitle

\abstract{We introduce the concepts of complex Grassmannian codes and designs. Let $\gras{m}{n}$ denote the set of $m$-dimensional subspaces of $\cx^n$: then a \defn{code} is a finite subset of $\gras{m}{n}$ in which few distances occur, while a \defn{design} is a finite subset of $\gras{m}{n}$ that polynomially approximates the entire set. Using Delsarte's linear programming techniques, we find upper bounds for the size of a code and lower bounds for the size of a design, and we show that association schemes can occur when the bounds are tight. These results are motivated by the bounds for real subspaces recently found by Bachoc, Coulangeon and Nebe, and the bounds generalize those of Delsarte, Goethals and Seidel for codes and designs on the complex unit sphere.}


\section{Introduction}

In this paper, we introduce the concept of complex Grassmannian codes and designs: codes and designs in the collection of fixed-rank subspaces of a complex vector space.

In the 1970's, Delsarte \cite{del1} developed a series of excellent bounds for certain error-correcting codes by treating codewords as points in an association scheme and then applying linear programming. Shortly thereafter, Delsarte, Goethals and Seidel \cite{dgs} showed that the same technique could also be used on systems of points on the real or complex unit sphere, which they called spherical codes and spherical designs; this resulted in important contributions to problems in sphere-packing \cite[Chapter 9]{cs1}. This linear programming technique, which is now known as ``Delsarte LP theory", has proved surprisingly portable. Recently, Bachoc, Coulangeon and Nebe \cite{bcn1} generalized the results of Delsarte, Goethals and Seidel to real Grassmannian spaces, and Bachoc \cite{bac1} pointed out that ``the same game" can be played over the complex numbers. In this paper, we investigate more closely the case of complex Grassmannian codes.

The motivation for studying complex Grassmannians comes from the theory of quantum measurements. Roughly speaking, any complex Grassmannian $1$-design defines a projective measurement in the theory of quantum mechanics. It has recently been discovered that complex projective $2$-designs correspond to quantum measurements that are optimal for the purposes of nonadaptive quantum state tomography \cite{sco}. In fact, this is also true in the more general Grassmannian setting: complex Grassmannian $2$-designs are the optimal choices of measurements for nonadaptive quantum state tomography when the observer only has access to measurements with a restricted number of outcomes. More details will appear in a paper by Godsil, R\"otteler, and the author \cite{grr}. Complex Grassmannians also play a role in certain wireless communication protocols \cite{aru1}.

Define $\gras{m}{n}$ to be the set of $m$-dimensional subspaces of an $n$-dimensional complex vector space. Without loss of generality, we will always assume $m \leq n/2$. Usually, we will represent a subspace $a$ by its $n \times n$ projection matrix $P_a$. The inner product on $\gras{m}{n}$ is the trace inner product for projection matrices:
\begin{align*}
\ip{a}{b} & := \tr(P_a^*P_b) \\
& = \tr(P_aP_b).
\end{align*}
Since $\ip{a}{b} = \ip{b}{a}$, the inner product is real. This is a measure of separation, or distance, between two subspaces---note that is not a distance metric per se: the inner product of $P_a$ with itself is maximal rather than minimal. However, the \defn{chordal distance} \cite{chs}, defined by 
\[
d_c(P_a,P_b) := \sqrt{m - \tr(P_aP_b)},
\]
is a monotonic function of the inner product. Given a finite set of inner product values $\scr{A}$, an \defn{$\scr{A}$-code} is a subset $S$ of $\gras{m}{n}$ such that 
\[
\scr{A} = \{\tr(P_aP_b): a,b \in S, \; a \neq b \}.
\]
An \defn{$s$-distance set} is an $\scr{A}$-code with $|\scr{A}| = s$. This generalizes the concept of an $s$-distance set on the complex unit sphere: if $u$ and $v$ are unit vectors, then their separation distance on the unit sphere is a function of 
\[
\abs{u^*v}^2  = \tr(uu^*vv^*).
\]
We are interested in codes of maximal size for a fixed $\scr{A}$ or $s$, and bounds on their size based on zonal polynomials. Table \ref{tab:codebounds} in Section \ref{sec:bounds} gives a summary of the bounds for small $\abs{\scr{A}}$.

The outline of this paper is as follows. In Section \ref{sec:orbitals}, we describe the orbits of pairs of subspaces in $\gras{m}{n}$ under the action of $U(n)$: these orbits play a significant role in the bounds derived later on. In Sections \ref{sec:representations}, \ref{sec:symmetric} and \ref{sec:zonal}, we develop the necessary representation theory background needed for our LP bounds. In particular, we discuss the decomposition of the square-integrable functions on $\gras{m}{n}$ into irreducible representations of $U(d)$, and the zonal polynomials for these representations. The results in this section are all known, and the development is quite similar to that of Bachoc, Coulangeon and Nebe for real Grassmannians. In fact, the complex case is actually easier than the real case, because representations of the unitary group $U(n)$ are easier to describe than representations of the orthogonal group $O(n)$. In Section \ref{sec:bounds}, we develop absolute and relative bounds for codes, and show how these bounds for $\gras{m}{n}$ reduce to known bounds for complex spherical codes when $m = 1$. These bounds are compared to some other known bounds for subspaces in Section \ref{sec:otherbounds}. In Section \ref{sec:designs}, we consider Grassmanian designs. Grassmannian codes enjoy a form of duality with complex Grassmannian designs, very similar to real Grassmannian codes or spherical codes. In Section \ref{sec:examples}, we give examples in which the bounds are tight. In many cases codes of maximal size or designs of minimal size have the structure of an association scheme, which we describe in Section \ref{sec:assoc}.

\section{Orbitals}
\label{sec:orbitals}

In this section we describe the orbits of pairs of elements of $\gras{m}{n}$ under the action of $U(n)$. 

First, we claim that $\gras{m}{n}$ can be identified with a factor group of the unitary group, $\nobreak{U(n)/(U(m) \times U(n-m))}$. For, consider the first $m$ columns of a matrix of $U(n)$ as the basis for a subspace $a$ of dimension $m$ in $\cx^n$, letting the last $n-m$ columns be a basis for $a^{\perp}$. Then $a$ is invariant under the action of $U(m)$ on the first $m$ columns, while $a^{\perp}$ is invariant under $U(n-m)$. 

As a result of this factor group, $U(n)$ acts on $\gras{m}{n}$ as follows: if $U$ is in $U(n)$ and $P_a$ is the projection matrix for $a \in \gras{m}{n}$, then 
\[
U: P_a \mapsto UP_aU^*.
\]
This action is an isometry, in that it preserves the trace inner product on $\gras{m}{n}$. Unlike the complex unit sphere, however, $U(n)$ is not \defn{$2$-homogeneous} on $\gras{m}{n}$: $U(n)$ does not act transitively on pairs of subspaces with the same distance. In other words, the fact that $\tr(P_aP_b) = \tr(P_cP_d)$ does not imply that there is a unitary matrix mapping $a$ to $c$ and $b$ to $d$. In order to use zonal polynomials, we need to understand the orbits of pairs in $\gras{m}{n}$ under this isometry group, which requires principal angles. 

Given $a$ and $b$ in $\gras{m}{n}$, the \defn{principal angles} $\th_1,\ldots,\th_m$ between $a$ and $b$ are defined as follows: firstly, $\th_1$ is the largest angle that occurs between any two unit vectors $a_1 \in a$ and $b_1 \in b$:
\[
\th_1 := \min_{\substack{a_1 \in a\\b_1 \in b}} \; \arccos \abs{a_1^*b_1}.
\]
Secondly, $\th_2$ is the largest angle that occurs between any two unit vectors $a_2 \in a \cap a_1^\perp$ and $b_2 \in b \cap b_1^\perp$. Similarly define $\th_3,\ldots,\th_m$. These principle angles are closely related to the eigenvalues of $P_aP_b$: the first $m$ eigenvalues of $P_aP_b$ are $\{\cos^2 \th_1,\ldots,\cos^2\th_m\}$. Because of this correspondence, for the remainder of this paper we simply refer to the eigenvalues $y_i := \cos^2 \th_i$ (rather than the values $\th_i$) as the principal angles between $a$ and $b$. Note that $n-m$ of the eigenvalues of $P_aP_b$ are zero, so we need only consider the first $m$ eigenvalues. Conway, Hardin, and Sloane \cite{chs} accredit the following lemma to Wong \cite[Theorem 2]{wong}.

\begin{lemma}
The principal angles characterize the orbits of pairs of subspaces under $U(n)$.
\end{lemma}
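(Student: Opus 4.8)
The statement asserts an equivalence: two pairs $(a,b)$ and $(c,d)$ in $\gras{m}{n}$ lie in the same $U(n)$-orbit if and only if they share the same principal angles. The plan is to prove the two directions separately, with the forward (invariance) direction being routine and the converse (construction of an intertwining unitary) carrying essentially all of the work.

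For invariance, I would observe that $U \in U(n)$ sends $P_a \mapsto U P_a U^*$, so that $P_{Ua} P_{Ub} = U (P_a P_b) U^*$. Since conjugation by a unitary preserves the spectrum, the eigenvalues $y_i = \cos^2 \th_i$ of $P_a P_b$ are unchanged, and hence the principal angles are a $U(n)$-invariant of the pair.

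For the converse, the idea is to put an arbitrary pair into a canonical form depending only on its principal angles. First I would produce principal vectors: letting $M_a, M_b$ be $n \times m$ matrices whose columns are orthonormal bases of $a$ and $b$, I take a singular value decomposition $M_a^* M_b = V \Sigma W^*$, where $\Sigma = \diag(\cos\th_1, \ldots, \cos\th_m)$; by the eigenvalue computation in the preceding lemma the singular values are exactly the cosines of the principal angles. Replacing the bases by $M_a V$ and $M_b W$ yields orthonormal bases $a_1, \ldots, a_m$ of $a$ and $b_1, \ldots, b_m$ of $b$ with $\ip{a_i}{b_j} = \cos\th_i \, \delta_{ij}$, real and nonnegative because these entries are the singular values. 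The orthogonality relations then force the planes $W_i = \mathrm{span}(a_i, b_i)$ to be mutually orthogonal, so within each $W_i$ I can choose a unit vector $c_i \perp a_i$ with $b_i = \cos\th_i\, a_i + \sin\th_i\, c_i$ and $\sin\th_i \geq 0$, fixing the phase of $c_i$. This exhibits $(a,b)$ in a standard configuration determined entirely by the numbers $\th_i$. Given a second pair $(c,d)$ with the same principal angles, I build the analogous canonical data and define $U$ to send each $a_i$ and each $c_i$ to the corresponding principal and complementary vector of $(c,d)$. Since the number of unit cosines equals $\dim(a \cap b)$, we have $\dim(a+b) = \dim(c+d)$, so the orthogonal complements have equal dimension and $U$ extends to all of $\cx^n$ by matching any orthonormal bases of these complements. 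By construction $U$ carries an orthonormal basis to an orthonormal basis, so $U \in U(n)$, with $Ua = c$ and $Ub = d$.

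The main obstacle is the careful bookkeeping in the degenerate cases: when $\cos\th_i = 1$ the vectors $a_i$ and $b_i$ coincide and $W_i$ collapses to a line, so no complementary $c_i$ is needed, while when $\cos\th_i = 0$ the pair spans a genuine plane with $a_i \perp b_i$. Ensuring the phase choices are consistent across both pairs, so that the sine coefficients are simultaneously real and nonnegative, and confirming that the complementary vectors together with a basis for the complement of $a+b$ really do assemble into an orthonormal basis of $\cx^n$, is where the argument demands the most attention.
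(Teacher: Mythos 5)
Your proof is correct and follows essentially the same route as the paper: the forward direction via invariance of the spectrum of $P_aP_b$ under unitary conjugation, and the converse via the singular value decomposition of $M_a^*M_b$ followed by reduction to a canonical form determined by the angles. The only real difference is in how the canonical form is completed---the paper uses a QR decomposition of $N_a^*M_b$ and argues the triangular factor is diagonal, while you construct the complementary unit vectors $c_i$ directly inside the mutually orthogonal planes $\mathrm{span}(a_i,b_i)$---and your explicit handling of the degenerate angles $\cos\th_i \in \{0,1\}$ is if anything slightly more careful than the paper's.
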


\begin{proof}
Suppose $U \in U(n)$ maps projection matrices $P_a$ and $P_b$ to $P_c$ and $P_d$ respectively. Then by similarity, the eigenvalues of 
\[
P_cP_d = (UP_aU^*)(UP_bU^*) = UP_aP_bU^*
\]
are the same as the eigenvalues of $P_aP_b$.

Conversely, we show that if $P_aP_b$ and $P_cP_d$ have the same eigenvalues, then some unitary matrix $U$ maps $a$ to $c$ and $b$ to $d$. We do this by unitarily mapping $a$ and $b$ into a canonical form that depends only on the eigenvalues of $P_aP_b$. 

Let $M_a$ be an $n \times m$ matrix whose columns $[a_1, \ldots, a_m]$ are an orthonormal basis for $a$, so that $M_aM_a^* = P_a$ and $M_a^*M_a = I$. Similarly define $M_b = [b_1, \ldots, b_m]$ for $b$. Suppose $M_a^*M_b$ has singular value decomposition $UDV^*$, where $U$ and $V$ are $m \times m$ unitary and $D$ is $m \times m$ diagonal. Then $(M_aU)^*(M_bV) = D$. Since the columns of $M_aU$ are another orthonormal basis for $a$, without loss of generality we replace $M_a$ by $M_aU$ and likewise replace $M_b$ with $M_bV$. In other words, we may assume without loss of generality that $M_a^*M_b = D$, where $D$ is a diagonal matrix of singular values. 

Next, define the columns of $N_a = [a_{m+1},\ldots,a_n]$ to be any orthonormal basis for $a^\perp$, so that $N_aN_a^* = I-P_a$ and $N_a^*N_a = I$. Further assume that $N_a^*M_b = QR$, where $Q$ is $(n-m) \times (n-m)$ unitary and $R$ is $(n-m) \times m$ upper triangular (the $QR$-decomposition of $N_a^*M_b$). Then $Q^*N_a^*M_b = R$, and the columns of $N_aQ$ form another orthonormal basis for $a^\perp$. Replacing $N_a$ by $N_aQ$, we may assume without loss of generality that $N_a^*M_b$ is upper triangular. 

Finally, let $U_a := \Big(\begin{smallmatrix}M_a^* \\ N_a^*\end{smallmatrix}\Big)$; this is an $n \times n$ unitary matrix. Then
\[
U_aM_a = \left(\begin{matrix}I_m \\ 0 \end{matrix}\right); \quad U_aM_b  = \left(\begin{matrix}D \\ R \end{matrix}\right).
\]
If $P_aP_b$ has eigenvalues $\cos^2 \th_i$, then $M_a^*M_b = D$ has singular values $\cos \th_i$. Moreover, since $U_aM_b$ has orthonormal columns, it follows that $R$ also has orthogonal columns. We may therefore assume that $R$ is not just the upper triangular but diagonal, with diagonal entries $\sin \th_i$. Thus $U_a$ is a unitary matrix which maps $M_a$ and $M_b$ into the form
\[
M_a \mapsto \left(\begin{matrix}I_m \\ 0 \end{matrix}\right), \quad M_b \mapsto \left(\begin{matrix}
\cos \th_1 & & \\
& \ddots & \\
& & \cos \th_m \\
\sin \th_1 & & \\
& \ddots & \\
& & \sin \th_m \\
& 0 & \\
\end{matrix}\right).
\]
Since any pair $(M_a,M_b)$ with principal angles $\cos^2 \th_i$ can be mapped to this canonical form, it follows that the eigenvalues of $P_aP_b$ characterize the orbits of pairs $(a,b)$ under the unitary group. \qed
\end{proof}

\section{Representations}
\label{sec:representations}

In this section and the next, we develop the representation theory needed for Grassmannian LP bounds.

As is standard for compact Lie groups, we work with functions on $\gras{m}{n}$ to find irreducible representations. Define an inner product for functions on $\gras{m}{n}$ as follows:
\[
\ip{f}{g} := \int_{\gras{m}{n}} \conj{f(a)}g(a) \; da.
\]
Here $da$ is the unique measure invariant on $\gras{m}{n}$, normalized so that $\nobreak{\int da = 1}$. That such a measure exists an is unique (the Haar measure) follows from the fact that $\gras{m}{n}$ is a compact Lie group. Equivalently, we may write
\[
\ip{f}{g} := \int_{U(n)} \conj{f(U^*P_aU)}g(U^*P_aU) \;dU,
\]
where $dU$ is the Haar measure on $U(n)$, and $P_a$ is the projection matrix for some fixed $a \in \gras{m}{n}$. Now let $L^2(\gras{m}{n})$ denote the space of square-integrable functions on $\gras{m}{n}$. Then $U(n)$ acts on $f\ \in \gras{m}{n}$ as follows:
\[
(Uf)(P_a) := f(U^*P_aU).
\]
It follows that $L^2(\gras{m}{n})$ provides a representation of $U(n)$. As we will see, this representation can be decomposed into irreducible subrepresentations explicity, and the decomposition is \defn{multiplicity-free}: no irreducible representation of $U(n)$ occurs more than once in $L^2(\gras{m}{n})$. 

Since $U(n)$ is a compact Lie group, its irreducible representations are well-studied: see for example \cite{sep,gw1,bump,fh1}. Every irreducible representation is indexed by a \defn{dominant weight} \cite[Theorem 7.34]{sep}. In the case of $U(n)$, we may take these weights to have the form \cite[Theorem 38.3]{bump}
\[
\la = (\la_1,\ldots,\la_n): \la_1 \geq \la_2 \geq \ldots \geq \la_n, \la_i \in \ints.
\]
The dimension of the irreducible representation $V_\la$ indexed by $\la$ is given by \defn{Weyl's character formula} \cite[Theorem 7.32]{sep}. In the case of $U(n)$, the formula reduces to: 
\begin{equation}
\dim V_\la = \prod_{1 \leq i < j \leq n} \frac{\la_i - \la_j + j-i}{j-i}.
\label{eqn:weylchar}
\end{equation}
For example, the standard representation of $U(n)$ is indexed by $\la = (1,0,\ldots,0)$, which gives
\[
\dim V_{(1,0,\ldots,0)} = n.
\]
Note that there is more than one irreducible representation with the same dimension.

Each dominant weight may also be thought of as a form acting on a maximal Abelian subgroup of the Lie group. Here $\la$ acts on the diagonal matrix $d =\diag(d_1,\ldots,d_n) \in U(n)$ as follows:
\[
d^{\la} := \prod_{i=1}^n d_i^{\la_i}.
\]
The next section describes exactly which of these forms contribute to the decomposition of $L^2(\gras{m}{n})$.

\section{Symmetric spaces}
\label{sec:symmetric}

The group $U(n)/U(m) \times U(n-m)$ is an example of a \defn{symmetric space}: a factor group $G/K$ such that $G$ is a connected semisimple Lie group and $K$ is the fixed point set of an involutive automorphism of $G$. In this section, we use results from Goodman and Wallach \cite{gw1} to explain how the decomposition of representations of $\gras{m}{n}$ follows from this structure.

Let $s_m$ denote the $m \times m$ matrix with backwards diagonal entries of $1$ and $0$ elsewhere:
\[
s_m := \left(\begin{matrix} 
0 & & 1 \\
& \iddots & \\
1 & & 0
                 \end{matrix} \right).
\]

Then $U(n,s_n)$ denotes the group of matrices which preserve the Hermitian form $(x,y) \mapsto x^*s_ny$: that is, $U(n,s_n)$ is the set of matrices $M$ such that $M^*s_nM = s_n$. This group is isomorphic the standard unitary group $U(n)$. Define
\[
J_{m,n} := \left(\begin{matrix} 
& & s_m \\
& I_{n-2m} & \\
s_m & &
                 \end{matrix} \right),
\]
and consider the involution $\th(M) := J_{m,n}MJ_{m,n}$ on $GL_n(\cx)$. The fixed points of $\th$ have the form 
\[
M = \left(\begin{matrix} 
a & b & c \\
d & e & ds_m \\
s_mcs_m & s_mb & s_mas_m 
                 \end{matrix} \right),
\]
so the fixed point set in $GL_n(\cx)$ is isomorphic to $GL_m(\cx) \times GL_{n-m}(\cx)$. 

\begin{lemma}
The fixed point set $K$ of $\th$ in $G = U(n,s_n)$ is isomorphic to $U(m) \times U(n-m)$. Therefore $\gras{m}{n}$ is a symmetric space.
\end{lemma}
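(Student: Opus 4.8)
The plan is to realize $K = G^\th$ as the intersection $G \cap H$, where $H := (GL_n(\cx))^\th$ is the fixed-point subgroup already identified above as $GL_m(\cx) \times GL_{n-m}(\cx)$, and then to cut this complex product down to its ``unitary part'' by means of the form $s_n$. Since $\th(M) = J_{m,n} M J_{m,n}$ and $J_{m,n}^2 = I_n$, a matrix is fixed by $\th$ exactly when it commutes with $J_{m,n}$, i.e.\ when it preserves each eigenspace of the involution $J_{m,n}$. I would first record that the $+1$-eigenspace $V_+$ and the $-1$-eigenspace $V_-$ of $J_{m,n}$ have dimensions $n-m$ and $m$ respectively: the middle block $I_{n-2m}$ together with the symmetric combinations of the two outer blocks accounts for $n-m$, and the antisymmetric combinations for $m$. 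Thus every element of $H$ block-diagonalizes as $GL(V_+)\times GL(V_-)$, recovering the stated isomorphism $H\cong GL_{n-m}(\cx)\times GL_m(\cx)$.

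The key step is a single commutation identity. A direct block multiplication gives
\[
s_n J_{m,n} = J_{m,n} s_n = \diag(I_m,\, s_{n-2m},\, I_m),
\]
so $s_n$ and $J_{m,n}$ commute. This does the essential work in two ways. First, since $J_{m,n}^* = J_{m,n}$, for $M\in G$ one computes $(J_{m,n}MJ_{m,n})^* s_n (J_{m,n}MJ_{m,n}) = J_{m,n}(M^* s_n M)J_{m,n} = J_{m,n}s_n J_{m,n} = s_n$, so $\th$ restricts to an involutive automorphism of $G$ and $K = G^\th$ is genuinely a subgroup of $G$. Second, because $J_{m,n}$ is Hermitian its eigenspaces $V_+$ and $V_-$ are orthogonal for the standard inner product, and the commutation relation upgrades this to orthogonality for the form $s_n$: if $v\in V_+$ and $w\in V_-$ then $s_n w\in V_-$, whence $v^* s_n w = 0$. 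Consequently $s_n$ restricts to a nondegenerate Hermitian form on each of $V_+$ and $V_-$.

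It then follows formally that an element of $K$ preserves both the form $s_n$ and each eigenspace, hence restricts to an isometry of $(V_+, s_n|_{V_+})$ and of $(V_-, s_n|_{V_-})$; conversely any pair of such isometries assembles to an element of $K$. So $K$ is the product of these two isometry groups, of sizes $n-m$ and $m$, which (consistently with the identification $U(n,s_n)\cong U(n)$ used throughout) is $U(n-m)\times U(m)\cong U(m)\times U(n-m)$. Finally, $\th$ is an involutive automorphism of $G$ with $K=G^\th$, so by the definition of a symmetric space $G/K$ is symmetric; combined with the identification $\gras{m}{n}\cong U(n)/(U(m)\times U(n-m))$ from Section~\ref{sec:orbitals}, this shows $\gras{m}{n}$ is a symmetric space.

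The main obstacle will be the passage from the complex fixed-point group $GL_m(\cx)\times GL_{n-m}(\cx)$ to its unitary part: one must verify that $s_n$ does not mix the two eigenspaces of $J_{m,n}$, which is exactly what the commutation $s_n J_{m,n} = J_{m,n} s_n$ guarantees. Pinning down the precise real form---that the restricted forms yield the compact $U(m)$ and $U(n-m)$ rather than some indefinite $U(p,q)$---amounts to reading off the signature of $s_n$ on $V_\pm$ and is forced once the isomorphism type of the pair $(G,s_n)$ is fixed; the remaining verifications are routine block-matrix bookkeeping.
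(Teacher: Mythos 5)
Your argument is correct and follows essentially the same route as the paper: identify $K$ with the elements of $U(n,s_n)$ that commute with $J_{m,n}$, decompose $\cx^n$ into the eigenspaces $V_\pm$ of $J_{m,n}$, check that these are orthogonal for the form $s_n$, and conclude $K \cong U(V_+,s_n|_{V_+}) \times U(V_-,s_n|_{V_-}) \cong U(n-m)\times U(m)$. The only substantive difference is that you derive the $s_n$-orthogonality of $V_+$ and $V_-$ from the commutation $s_nJ_{m,n}=J_{m,n}s_n$ (the paper asserts it directly, and it can also be checked by hand on the vectors $(a,b,\breve{a})$ and $(c,0,-\breve{c})$); both treatments leave the definiteness of $s_n|_{V_\pm}$ at the same level of detail.
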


\begin{proof}
For $a = (a_1,\ldots,a_m)$, let $\breve{a}$ denote the reversal of $a$, namely
\[
\breve{a} := s_ma = (a_m,\ldots,a_1).
\]
If $a$, $b$, and $c$ have length $m$, $n-2m$ and $m$ respectively, then we have $J_{m,n}(a,b,c)^T = (\breve{c},b,\breve{a})^T$. Therefore the $1$ and $-1$ eigenspaces of $J_{m,n}$ are $V_+ = \{(a,b,\breve{a})\}$ and $V_- = \{(a,0,-\breve{a})\}$ respectively. These spaces are orthogonal with respect to the form $(x,y) \mapsto x^*s_ny$. 

Now $K$ is the set of points in $U(n,s_n)$ which commute with $J_{m,n}$. So decomposing $\cx^n$ into $V_+ \oplus V_-$, we have that $K$ is the set of points in $U(n,s_n)$ which leave both $V_+$ and $V_-$ invariant. In other words, $K$ is the set of points which preserve the form $s_n$ on the subspaces $V_+$ and $V_-$. Thus 
\[
K \cong U(V_+,s_n|_{V_+}) \times U(V_-,s_n|_{V_-}) \cong U(n-m) \times U(m). \eqed
\]
\end{proof}

The fact that $K$ is the fixed point set of $\th$ in $G$ implies (\cite[Theorem 12.3.5]{gw1}) that $(G,K)$ is a \defn{spherical pair}: for every irreducible representation $V_\la$ of $G$, the subspace $V_\la^K$ of points fixed by $K$ satisfies $\dim V_\la^K \leq 1$. Those representations such that $V_\la^K$ has dimension exactly $1$ are called \defn{spherical representations}. The following theorem \cite[Theorem V.4.3]{hel} explains how those representation relate to $L^2(G/K)$.

\begin{theorem}
Let $G$ be a compact simply connected semisimple Lie group, and let $K \leq G$ be the fixed point group of an involutive automorphism of $G$. Further let $\hat{G}_K$ denote the set of equivalence classes of spherical representations $V_\la$ of $G$ with respect to $K$. Then $L^2(G/K)$ is a multiplicity-free representation of $G$, and
\[
L^2(G/K) \cong \bigoplus_{\la \in \hat{G}_K} V_\la.
\]
\end{theorem}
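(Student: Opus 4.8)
The engine of the proof is the Peter--Weyl theorem, which describes $L^2(G)$ completely as a representation of $G \times G$ under left and right translation; the plan is to pass from $L^2(G)$ to $L^2(G/K)$ by extracting the functions invariant under right translation by $K$, and then to read off the multiplicities from the spherical-pair hypothesis. To begin, I would identify $L^2(G/K)$, $G$-equivariantly, with the subspace of right-$K$-invariant functions in $L^2(G)$, where $G$ acts by left translation. Peter--Weyl then gives the Hilbert-space decomposition
\[
L^2(G) \cong \bigoplus_{\la \in \hat{G}} V_\la \otimes V_\la^*,
\]
realized by the matrix-coefficient map $v \otimes \phi \mapsto \left(g \mapsto \phi(g^{-1}v)\right)$, in which the left-translation action of $G$ lives on the first tensor factor and the right-translation action on the second factor $V_\la^*$.

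Next I would take right-$K$-invariants. Since $K$ acts only on the second factor, the $K$-fixed subspace of each summand is $V_\la \otimes (V_\la^*)^K$. Because $K$ is compact, the averaging projection $\int_K \dd k$ is orthogonal and respects the decomposition above, so invariants may be taken summand by summand without disturbing the completion. This produces
\[
L^2(G/K) \cong \bigoplus_{\la \in \hat{G}} V_\la \otimes (V_\la^*)^K
\]
as a representation of $G$ through the surviving first factor, whence the multiplicity of $V_\la$ in $L^2(G/K)$ is exactly $\dim (V_\la^*)^K$.

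Finally I would invoke the spherical-pair property. It asserts $\dim V_\la^K \leq 1$ for every $\la$; since averaging over the compact group $K$ commutes with dualization, $\dim (V_\la^*)^K = \dim V_\la^K \leq 1$ as well. Hence every irreducible $V_\la$ occurs with multiplicity at most one, so $L^2(G/K)$ is multiplicity-free, and $V_\la$ occurs (exactly once) precisely when $\dim V_\la^K = 1$, that is, when $V_\la$ is spherical. Restricting the sum to $\hat{G}_K$ yields the asserted isomorphism. The same conclusion follows more compactly from Frobenius reciprocity for compact groups: writing $L^2(G/K) = \mathrm{Ind}_K^G \mathbf{1}$, the multiplicity of $V_\la$ is $\dim \Hom_K(V_\la, \mathbf{1}) = \dim (V_\la^*)^K$.

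I do not expect a deep obstacle, since the statement is essentially a formal consequence of Peter--Weyl together with the definition of a spherical pair---indeed the argument uses only that $G$ and $K$ are compact and that the pair is spherical, not the full strength of the stated hypotheses. The points that genuinely demand care are the analytic identification of $L^2(G/K)$ with the right-$K$-invariants inside the Hilbert-space completion $L^2(G)$, justifying that $K$-invariants can be extracted summand by summand, and the bookkeeping of which tensor factor retains the $G$-action once the $K$-invariants on the other factor have been taken.
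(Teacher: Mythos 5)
The paper does not prove this theorem at all: it is quoted verbatim from Helgason \cite[Theorem V.4.3]{hel}, with the only locally supplied ingredient being the preceding remark (via Goodman--Wallach) that the fixed-point group of an involution makes $(G,K)$ a spherical pair, i.e.\ $\dim V_\la^K \le 1$. Your proposal therefore supplies a proof where the paper offers a citation, and the proof you give is correct and is essentially the standard one underlying Helgason's result: identify $L^2(G/K)$ with the right-$K$-invariants in $L^2(G)$, apply Peter--Weyl to get $\bigoplus_\la V_\la \otimes V_\la^*$ with left translation on the first factor and right translation on the second, average over $K$ block by block to obtain $\bigoplus_\la V_\la \otimes (V_\la^*)^K$, and use $\dim(V_\la^*)^K = \dim V_\la^K \le 1$ to conclude multiplicity-freeness with the surviving summands exactly the spherical ones. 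The bookkeeping of which factor carries which action is right, the averaging projector argument legitimately commutes with the Hilbert-space completion, and the Frobenius reciprocity remark is an accurate shortcut. You also correctly isolate where the real content lies: the formal Peter--Weyl part needs only compactness, while the involution hypothesis enters solely through the spherical-pair property, which in the paper's development is itself imported from \cite[Theorem 12.3.5]{gw1} rather than proved. In short, your argument is sound and is the proof the paper delegates to its references.
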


To describe which representations are spherical, we now consider diagonal subgroups of $G$ and $K$. For $d = (d_1,\ldots,d_n)$, let $\diag(d)$ denote the diagonal matrix with diagonal entries $d_1,\ldots,d_n$. Firstly, note that $\diag(d)$ is in $U(n,s_n)$ if and only if $d_{n+1-k} = 1/\bar{d_k}$, where $\bar{d_k}$ is the complex conjugate of $d_k$. In other words, if $\bar{d}^{-1}$ denotes the vector $(1/\bar{d_1},\ldots,1/\bar{d_k})$, then $\diag(d)$ is in $U(n,s_n)$ if and only if $\breve{d} = \bar{d}^{-1}$. Secondly, note that if $d = \diag(a,b,c)$ with $a$ and $c$ of length $m$, then $\th(d) = (\breve{c},b,\breve{a})$. It follows that the diagonal group 
\[
T := \{\diag(a_1,\ldots,a_m,c_{m+1},\ldots,c_{n-m},a_m,\ldots,a_1): \abs{a_i} = 1, \breve{c} = \bar{c}^{\,-1}\}
\]
is contained in $K$. In fact, it is a maximal Abelian subgroup of $K$: this is called a \defn{torus} of $K$. 

Recall that the irreducible representations of $G$ are indexed by the dominant weights $\la = (\la_1,\ldots,\la_n)$, where $\la_i \geq \la_{i+1}$ and $\la_i \in \ints$. Now the spherical representations of $G$ with respect to $K$ are indexed by those particular dominant weights such that $t^{\la} = 1$ for all $t = (t_1,\ldots,t_n)$ in the torus $T$ (see Goodman and Wallach \cite[p. 540]{gw1}). So a dominant weight $\la$ is spherical if it has the form
\[
\la = (\la_1,\ldots,\la_m,0,\ldots,0,-\la_m,\ldots,-\la_1)
\]
with $\la_1 \geq \ldots \geq \la_m \geq 0$ and $\la_i \in \ints$. In other words:

\begin{theorem}
The irreducible representations of $U(n)$ occurring in $L^2(\gras{m}{n})$ are in one-to-one correspondence with the integer partitions with at most $m$ parts.
\end{theorem}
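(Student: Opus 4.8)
The plan is to combine the two structural results already in hand. By the theorem of Helgason quoted above, $L^2(\gras{m}{n})$ is multiplicity-free with $L^2(G/K) \cong \bigoplus_{\la \in \hat{G}_K} V_\la$, so the irreducible representations occurring are exactly the spherical ones. It therefore suffices to show that the indexing set $\hat{G}_K$ is in bijection with the integer partitions having at most $m$ parts. For this I would invoke the Goodman--Wallach criterion recalled above: a dominant weight $\la$ lies in $\hat{G}_K$ precisely when $t^\la = 1$ for every $t$ in the torus $T \leq K$.

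The substance of the argument is to unwind this criterion against the explicit description of $T$. Writing $t = \diag(a_1,\ldots,a_m,c_{m+1},\ldots,c_{n-m},a_m,\ldots,a_1)$ and using $t^\la = \prod_{i=1}^n t_i^{\la_i}$, I would group the factors into an $a$-block and a $c$-block. Since position $i$ and position $n+1-i$ both carry $a_i$ for $1 \leq i \leq m$, the $a$-contribution factors as $\prod_{i=1}^m a_i^{\la_i + \la_{n+1-i}}$, while the middle block contributes $\prod_{i=m+1}^{n-m} c_i^{\la_i}$. Because each $a_i$ ranges independently over the entire unit circle, the condition $t^\la = 1$ for all $t$ forces $\la_i + \la_{n+1-i} = 0$, that is $\la_{n+1-i} = -\la_i$, for every $i \leq m$.

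For the middle block I would use the defining constraint $\breve{c} = \conj{c}^{\,-1}$, i.e.\ $c_{n+1-i} = 1/\conj{c_i}$. Pairing index $i$ with $n+1-i$ and writing $c_i = r\,e^{\sqrt{-1}\,\phi}$ with $r > 0$ and $\phi$ free, the contribution of the pair is $r^{\la_i - \la_{n+1-i}}\,e^{\sqrt{-1}(\la_i + \la_{n+1-i})\phi}$; requiring this to equal $1$ for all $r > 0$ and all $\phi$ forces both the modulus exponent $\la_i - \la_{n+1-i}$ and the phase exponent $\la_i + \la_{n+1-i}$ to vanish, giving $\la_i = \la_{n+1-i} = 0$ (and when $n-2m$ is odd the unpaired central entry has modulus $1$, so its exponent must vanish directly). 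Hence every middle coordinate of $\la$ is zero, and combined with the previous paragraph this pins $\la$ to the shape $\la = (\la_1,\ldots,\la_m,0,\ldots,0,-\la_m,\ldots,-\la_1)$. Imposing the dominance condition $\la_1 \geq \cdots \geq \la_n$ then reduces exactly to $\la_1 \geq \cdots \geq \la_m \geq 0$, the interlacing with the central zeros and the negative tail being automatic once this holds.

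Finally, the assignment $\la \mapsto (\la_1,\ldots,\la_m)$ furnishes the claimed bijection: a tuple $\la_1 \geq \cdots \geq \la_m \geq 0$ of integers is precisely an integer partition with at most $m$ parts, and the symmetric shape recovers $\la$ uniquely from it. I expect the only delicate point to be the middle-block analysis: because of the reciprocal-conjugate pairing one must separate the modulus and the phase of each free $c_i$ (and handle the odd central coordinate) in order to conclude that the relevant exponents truly vanish rather than merely cancel against one another; the remainder is bookkeeping on the weight once the spherical criterion is available.
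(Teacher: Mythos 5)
Your argument is correct and is exactly the route the paper takes: Helgason's multiplicity-free decomposition into spherical representations, the Goodman--Wallach criterion $t^\la = 1$ on the torus $T$ of $K$, and then the explicit unwinding of that criterion against the paper's description of $T$ (the paper merely asserts the resulting form of the spherical weights, whereas you supply the modulus/phase computation for the middle block). No gaps; the bookkeeping on dominance at the end is also right.
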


For any partition $\mu$, we let $H_\mu(n)$, or simply $H_\mu$, denote the irreducible representation in $L^2(\gras{m}{n})$ isomorphic to $V_{(\mu,0,\ldots,0,-\breve{\mu})}$. The Weyl character formula (equation \eqref{eqn:weylchar}) now tells us the dimension of each $H_\mu$. The first few dimensions are:
\begin{align*}
\dim H_{(0)} = \dim V_{(0,\ldots,0)} & = 1 \\
\dim H_{(1)} = \dim V_{(1,0,\ldots,0,-1)}& = n^2 - 1 \\
\dim H_{(2)} & = \frac{n^2(n-1)(n+3)}{4} \\
\dim H_{(1,1)} & = \frac{n^2(n+1)(n-3)}{4} \\
\dim H_{(2,1)} & = \frac{(n^2-1)^2(n^2-9)}{9} \\
\dim H_{(k)} & = \binom{n+k-2}{k}^2\frac{n+2k-1}{n-1} \\
\dim H_{(\underbrace{\scriptstyle{1,\ldots,1}}_k)} & = \binom{n+1}{k}^2\frac{n-2k+1}{n+1} 
\end{align*}

If $m = 1$, then $\gras{m}{n}$ is the complex projective space $\cx P^{n-1}$, and only the spaces $H_{(k)}$ occur. In that case $H_{(k)}$ is isomorphic to the space $\Harm(k,k)$ of harmonic polynomials of homogeneous degree $k$ in both $z$ and $\bar{z}$, where $z = (z_1,\ldots,z_n)$ is a point on the unit sphere in $\cx^n$. Those harmonic polynomials were used by Delsarte, Goethals, and Seidel in their LP bounds for codes and designs the complex unit sphere \cite{dgs}.

We now record a few more representations of $U(n)$ we will need later. Given an nonincreasing sequence of nonnegative integers $\mu = (\mu_1,\mu_2,\ldots)$, we say $\mu$ has \defn{size} $k$ and write $\abs{\mu} = k$ if $\mu$ is a partition of $k$; that is, $\sum_i \mu_i = k$. We also say $\mu$ has \defn{length} $l$ and write $\len(\mu) = l$ if $\mu$ has $l$ nonzero entries. For example, $(2,1,0,\ldots)$ has size $3$ and length $2$. Then for fixed $\gras{m}{n}$, define $H_k = H_k(m,n)$ as follows:
\[
H_k(m,n) := \bigoplus_{\substack{\abs{\mu} \leq k \\ \len(\mu) \leq m}} H_\mu(n).
\]
For $k > 0$ this representation is reducible, and $H_{k-1}$ is contained in $H_k$. When $m=1$, $H_k$ is isomorphic to the space of homogeneous polynomials degree $k$ in both $z$ and $\bar{z}$ on the unit sphere in $\cx^n$. In the next section, we will see that $H_k$ is also the span of the
degree-$k$ symmetric polynomials on the principal angles between $a \in \gras{m}{n}$ and some fixed $b \in \gras{m}{n}$. Moreover, if $g$ and $h$ are polynomials in $H_k$ and $H_{k'}$ respectively, then $gh$ is in $H_{k+k'}$, and in fact $H_{k+k'}$ is spanned by polynomials of that form.

We also let $\Hom_k(n) \sbs L^2(\gras{m}{n})$ denote the space of polynomials which are homogeneous of degree $k$ in the entries of $P_a$, where $P_a$ is the projection matrix of $a \in \gras{m}{n}$. Since the constant function $P_a \mapsto \tr(P_a) = m$ is in $\Hom_1(n)$, it follows that $\Hom_{k-1}(n)$ can be embedded into $\Hom_k(n)$. Similarly for fixed $b$, the distance function $P_a \mapsto \tr(P_aP_b)$ is in $\Hom_1(n)$. The next section will also show that $H_k$ is a subspace of $\Hom_k$.

James and Constantine \cite{jc1} further investigated the irreducible subspaces of $L^2(\gras{m}{n})$, finding zonal polynomials for each irreducible representation. We describe those results in the next section.

\section{Zonal polynomials}
\label{sec:zonal}

A \defn{zonal polynomial} at a point $a \in \gras{m}{n}$ is a function on points $b \in \gras{m}{n}$ which depends only on the the principle angles between $a$ and $b$. Given any univariate polynomial $f(x)$ of degree $k$, we define the \defn{zonal polynomial of $f$ at $b$} as follows: if $f(x) = \sum_{i=0}^k f_i x^i$, then
\[
f_a(b) = \sum_{i=0}^k f_i \tr(P_aP_b)^i.
\]
Here $P_a$ and $P_b$ are the projection matrices for $a$ and $b$. As written, the zonal polynomial is not homogeneous, but by embedding the constant $1$ into $\Hom_1(n)$ in the form $\tr(P_b)/m$, the exponents in $f_a(b)$ may be ``pushed up'' and we may assume $f_a$ is in $\Hom_k(n)$. To see that $f_a(b)$ only depends on the principal angles between $a$ and $b$, note that $\tr(P_aP_b)$ is simply the sum of the principal angles. 

There is another set of zonal polynomials that play a particular role in the theory of Delsarte bounds. Let $H_\mu$ be an irreducible representation in $L^2(\gras{m}{n})$. Then for each $a \in \gras{m}{n}$, define the \defn{zonal orthogonal polynomial} $Z_{\mu,a}$ to be the unique element of $H_\mu$ such that for every $p \in H_\mu$,
\[
\ip{Z_{\mu,a}}{p} = p(a).
\]
Then zonal polynomials are invariant under the unitary group, in the following sense:
\[
Z_{\mu,b}(a) = \ip{U^*Z_{\mu,a}}{U^*Z_{\mu,b}} = \ip{Z_{\mu,Ua}}{Z_{\mu,Ub}} = Z_{\mu,Ub}(Ua).
\]
The value of $Z_{\mu,b}(a)$ depends on the $U(n)$-orbit of $(a,b)$ and therefore depends on the principle angles of $a$ and $b$. With this in mind we sometimes write $Z_{\mu,a}(b) = Z_\mu(a,b)$ or $Z_{\mu,a}(b) = Z_\mu(y_1,\ldots,y_m)$, where $(y_1,\ldots,y_m)$ are the principal angles of $a$ and $b$.

Schur orthogonality \cite[Theorem 3.3]{sep} for irreducible representations implies that $Z_{\mu,a}$ and $Z_{\nu,b}$ are orthogonal for $\mu \neq \nu$. So, we have
\[
\ip{Z_{\mu,a}}{Z_{\nu,b}} = \de_{\mu,\nu} Z_{\mu}(a,b). 
\]
Moreover, $Z_{\mu,a}(b) = Z_{\mu,b}(a)$ is in fact real and symmetric in $a$ and $b$. The zonal polynomials satisfy some other important properties, including the following positivity condition:

\begin{lemma}
\label{lem:possum}
For any subset $S \sbs \gras{m}{n}$,
\[
\sum_{a,b \in S} Z_\mu(a,b) \geq 0.
\]
Equality holds only when $\sum_{a \in S} Z_{\mu,a} = 0$.
\end{lemma}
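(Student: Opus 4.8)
The plan is to recognize the double sum as the squared length of a single vector in $H_\mu$. The key ingredient is already recorded just above the lemma: taking $\nu = \mu$ in the Schur orthogonality relation $\ip{Z_{\mu,a}}{Z_{\nu,b}} = \de_{\mu,\nu} Z_\mu(a,b)$ yields the identity $Z_\mu(a,b) = \ip{Z_{\mu,a}}{Z_{\mu,b}}$ for all $a,b \in \gras{m}{n}$. In other words, $Z_\mu(a,b)$ is literally a Gram entry for the family $\{Z_{\mu,a} : a \in S\}$ of elements of $H_\mu$.

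First I would substitute this identity into the sum and use the sesquilinearity of the $L^2$ inner product to pull both finite sums inside the two arguments:
\[
\sum_{a,b \in S} Z_\mu(a,b) = \sum_{a,b \in S} \ip{Z_{\mu,a}}{Z_{\mu,b}} = \ip{\sum_{a \in S} Z_{\mu,a}}{\sum_{b \in S} Z_{\mu,b}}.
\]
Since the indexing variables $a$ and $b$ both range over the same finite set $S$, the right-hand side is of the form $\ip{w}{w}$ with $w := \sum_{a \in S} Z_{\mu,a} \in H_\mu$. (There is no subtlety with the conjugation in the first slot of the inner product, since the paper has noted that each $Z_{\mu,a}$ is real; but even without that observation the collapse to a single vector $w$ is immediate.)

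The inner product of a vector with itself satisfies $\ip{w}{w} \geq 0$, which gives the claimed inequality. For the equality case, I would invoke the positive-definiteness of the $L^2$ inner product: $\ip{w}{w} = 0$ holds if and only if $w = 0$, that is, if and only if $\sum_{a \in S} Z_{\mu,a} = 0$. This is exactly the stated characterization of when equality occurs.

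I do not expect any genuine obstacle here: all the content is concentrated in the previously established identity $Z_\mu(a,b) = \ip{Z_{\mu,a}}{Z_{\mu,b}}$, and what remains is the standard fact that a Gram-type sum equals a squared norm and vanishes only when the summed vector is zero. The single point that merits a sentence of care is confirming that the double sum genuinely collapses to $\ip{w}{w}$ rather than to a mixed product $\ip{w}{w'}$ with distinct $w,w'$, and this is clear because both summation indices traverse the identical set $S$.
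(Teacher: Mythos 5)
Your proposal is correct and follows essentially the same route as the paper: rewrite $Z_\mu(a,b)$ as the inner product $\ip{Z_{\mu,a}}{Z_{\mu,b}}$, collapse the double sum to $\ip{w}{w}$ with $w = \sum_{a\in S} Z_{\mu,a}$, and conclude by positive-definiteness. Nothing further is needed.
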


\begin{proof}
We have
\begin{align*}
\sum_{a,b \in S} Z_\mu(a,b) & = \sum_{a,b \in S} \ip{Z_{\mu,a}}{Z_{\mu,b}} \\
& = \ip{\sum_{a \in S} Z_{\mu,a}}{\sum_{b \in S} Z_{\mu,b}} \\
& \geq 0.
\end{align*} 
Equality holds if and only if $\sum_{a \in S} Z_{\mu,a} = 0$. \qed
\end{proof}

The second important condition the zonal polynomials satisfy is called the \defn{addition formula}:

\begin{lemma}
\label{lem:addform}
Let $e_1,\ldots,e_N$ be an orthonormal basis for the irreducible subspace $H_\mu$. Then
\[
\sum_{i=1}^N \conj{e_i(a)}e_i(b) = Z_{\mu}(a,b).
\]
\end{lemma}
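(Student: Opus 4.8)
The plan is to recognize $\sum_{i=1}^N \conj{e_i(a)}e_i(b)$ as the value at $b$ of the reproducing kernel of the finite-dimensional space $H_\mu$, and then to identify that kernel with the zonal orthogonal polynomial $Z_{\mu,a}$ by invoking the uniqueness built into its definition. Concretely, I would fix $a$ and form the function $g_a := \sum_{i=1}^N \conj{e_i(a)}\,e_i$, which lies in $H_\mu$ because it is a finite linear combination of the basis elements. Since $\sum_i \conj{e_i(a)}e_i(b)$ is nothing but $g_a(b)$, the whole statement reduces to proving $g_a = Z_{\mu,a}$ and then evaluating at $b$, which yields $\sum_i \conj{e_i(a)}e_i(b) = Z_{\mu,a}(b) = Z_\mu(a,b)$.

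To prove $g_a = Z_{\mu,a}$, I would check that $g_a$ satisfies the defining reproducing property of $Z_{\mu,a}$, namely $\ip{g_a}{p} = p(a)$ for every $p \in H_\mu$. Writing $p$ in the orthonormal basis as $p = \sum_j \ip{e_j}{p}\,e_j$ and using that the inner product $\ip{f}{g} = \int \conj{f}g$ is conjugate-linear in its first slot, I would compute $\ip{g_a}{p} = \sum_i e_i(a)\,\ip{e_i}{p}$; the right-hand side is exactly the expansion of $p$ evaluated at $a$, so it equals $p(a)$. Because $Z_{\mu,a}$ was defined as the \emph{unique} element of $H_\mu$ with $\ip{Z_{\mu,a}}{p} = p(a)$ for all $p \in H_\mu$, uniqueness forces $g_a = Z_{\mu,a}$.

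The computation is routine, and the two points needing care are purely bookkeeping. First, the complex conjugates must be tracked against the convention $\ip{f}{g} = \int \conj{f}g$: it is precisely the coefficients $\conj{e_i(a)}$ that, under conjugate-linearity in the first argument, reproduce the values $e_i(a)$ and recover $p(a)$. Second, although each individual summand $\conj{e_i(a)}\,e_i$ depends on the chosen orthonormal basis, the sum $g_a$ does not; this basis-independence is automatic once $g_a$ is identified with the intrinsically defined object $Z_{\mu,a}$, so the addition formula holds for every orthonormal basis of $H_\mu$.

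I do not expect a genuine obstacle here. Because $H_\mu$ is finite-dimensional, evaluation at a point is a bounded linear functional, so the reproducing element $Z_{\mu,a}$ exists and is unique by the Riesz representation theorem (indeed this is how it was introduced), and the entire argument is a direct application of its defining property rather than anything requiring new input.
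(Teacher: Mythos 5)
Your proof is correct and is essentially the argument the paper gives: the paper expands $Z_{\mu,a}$ in the orthonormal basis and computes the coefficients $\ip{e_i}{Z_{\mu,a}} = \conj{e_i(a)}$ from the defining reproducing property, while you run the same computation in the other direction by verifying that $\sum_i \conj{e_i(a)}e_i$ reproduces and invoking uniqueness. The two are the same one-step argument, so there is nothing further to add.
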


\begin{proof}
Since $Z_{\mu,a}$ is in $H_\mu$, we may write it as a linear combination of $e_1,\ldots,e_N$:
\begin{align*}
Z_{\mu,a} & = \sum_{i=1}^N \ip{e_i}{Z_{\mu,a}}e_i \\
& = \sum_i \conj{e_i(a)} e_i.
\end{align*}
So, it follows that $Z_{\mu,a}(b) = \sum_i \conj{e_i(a)} e_i(b)$. \qed
\end{proof}

James and Constantine give an explicit formula for the zonal orthogonal polynomials of $\gras{m}{n}$ in terms of Schur polynomials, the irreducible characters of $SL(m,\cx)$. If $y = (y_1,\ldots,y_m)$ are variables and $\sg = (s_1,\ldots,s_m)$ is a partition into at most $m$ parts, then the (unnormalized) \defn{Schur polynomial} is defined as
\[
X_\sg(y) := \frac{\det(y_i^{s_j+m-j})_{i,j}}{\det(y_i^{k-j})_{i,j}}.
\]
Each Schur polynomial is a symmetric polynomial in $(y_1,\ldots,y_m)$. For more information about Schur polynomials, see Stanley \cite[Chapter 7]{stan}. The \defn{normalized Schur polynomial} $X^*_\sg$ is the multiple of $X_\sg$ such that $X^*_\sg(1,\ldots,1) = 1$.

To define the zonal orthogonal polynomials for $\gras{m}{n}$, first define the \defn{ascending product}
\[
(a)_s := a(a+1)\ldots(a+s-1),
\]
and given a partition $\sg = (s_1,\ldots,s_m)$, define \defn{complex hypergeometric coefficients}
\[
[a]_{\sg} := \prod_{i=1}^m (a-i+1)_{s_i}.
\]
Further assume we have a partial order $\leq$ on partitions defined such that $\nobreak{(s_1,\ldots,s_m) \leq (k_1,\ldots,k_l)}$ if and only if $s_i \leq k_i$ for all $i$. Letting $y+1 := (y_1+1,\ldots,y_m+1)$, the \defn{complex hypergeometric binomial coefficients} $\smallsbinom{\ka}{\sg}$ are given by the formula
\[
X^*_\ka(y+1) = \sum_{\sg \leq \ka} \sbinom{\ka}{\sg}X^*_{\sg}(y).
\]
We can now define the zonal orthogonal polynomials for $\gras{m}{n}$. The following result is due to James and Constantine \cite{jc1}.

\begin{theorem}
Let 
\[
\rho_\sg := \sum_{i=1}^m s_i(s_i-2i+1)
\]
and let $\sg$ and $\ka$ partition $s$ and $k$ respectively. Also let
\[
[c]_{(\ka,\sg)} := \sum_i\frac{\sbinom{\ka}{\sg_i}\sbinom{\sg_i}{\sg}}{(k-s)\sbinom{\ka}{\sg}}\frac{[c]_{(\ka,\sg_i)}}{\left(c + \frac{\rho_\ka - \rho_\sg}{k-s}\right)},
\]
where the summation is over partitions $\sg_i = (s_1,\ldots,s_{i-1},s_i+1,s_{i+1},\ldots)$ that are nonincreasing. Then up to normalization, the zonal orthogonal polynomial for $H_\ka$ is
\[
Z_{\ka}(y) := \sum_{\sg \leq \ka} \frac{(-1)^s \sbinom{\ka}{\sg}[c]_{(\ka,\sg)}}{[a]_\sg}X^*_\sg(y),
\]
where $y = (y_1,\ldots,y_m)$ is the set of principal angles.
\end{theorem}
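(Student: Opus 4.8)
The plan is to treat $Z_\ka$ as the reproducing kernel of the irreducible subspace $H_\ka$ and, by restricting to zonal functions, to realize the family $\{Z_\ka\}$ as the system of multivariate orthogonal polynomials attached to a Jacobi-type weight on the principal angles. First I would push the invariant measure on $\gras{m}{n}$ forward to the principal-angle coordinates $y=(y_1,\ldots,y_m)$. By the canonical form for pairs of subspaces established in Section \ref{sec:orbitals}, integrating a zonal function reduces to an integral against a weight of the shape $\prod_i y_i^{\al}(1-y_i)^{\be}\prod_{i<j}(y_i-y_j)^2$ on $[0,1]^m$, with exponents $\al,\be$ determined by $m$ and $n$; the exponent $2$ on the Vandermonde factor is the signature of the complex (as opposed to real) case. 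Restricting the $L^2(\gras{m}{n})$ inner product to zonal functions then becomes this weighted inner product on symmetric polynomials in $y$, and the defining reproducing property forces $Z_\ka$ to be orthogonal to every zonal element of $H_\mu$ for $\mu\neq\ka$. Because $(G,K)$ is a spherical pair, each $H_\mu$ carries a one-dimensional space of zonal functions ($\dim V_\mu^K\le 1$), so the $Z_\ka$ are exactly---up to normalization---the orthogonal polynomials for this weight, indexed by partitions $\ka$ with at most $m$ parts.

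Next I would fix the normalized Schur polynomials $\{X^*_\sg\}$ as a basis for symmetric polynomials and prove the triangular expansion $Z_\ka=\sum_{\sg\le\ka}c_{\ka\sg}\,X^*_\sg$ in the componentwise order. The degree filtration supplies the crude bound $\abs{\sg}\le\abs{\ka}$: since $H_k$ is spanned by the degree-$\le k$ symmetric polynomials in the principal angles while the sum $\bigoplus_{\abs{\mu}\le k}H_\mu$ is multiplicity-free, the $Z_\mu$ with $\abs{\mu}\le k$ form a graded basis, and the indexing of $H_\mu$ by partitions is arranged precisely so that the degree-$\abs{\ka}$ leading term of $Z_\ka$ is $X^*_\ka$. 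The sharper containment $\sg\le\ka$ is the combinatorial backbone of the whole computation, and it is governed by the binomial identity $X^*_\ka(y+1)=\sum_{\sg\le\ka}\smallsbinom{\ka}{\sg}X^*_\sg(y)$: the shift $y\mapsto y+1$ encodes the embedding of lower-degree zonal spaces used to ``push up'' degrees (via $\tr(P_b)/m=1$), so that the transition between shifted and unshifted Schur bases is triangular with the hypergeometric binomial coefficients as entries.

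To pin down the coefficients themselves I would use the Laplace--Beltrami, i.e. Casimir, operator $\Delta$ on the symmetric space $\gras{m}{n}$. Each $H_\ka$ is a Casimir eigenspace, so $Z_\ka$ is an eigenfunction of $\Delta$ whose eigenvalue is an affine function of $\rho_\ka$, the quantity $\rho_\sg=\sum_i s_i(s_i-2i+1)$ of the statement evaluated at $\ka$. Writing $\Delta$ as a second-order differential operator in the $y_i$ and applying it to the Schur expansion yields a recurrence expressing $c_{\ka\sg}$ through the coefficients $c_{\ka\sg_i}$, where $\sg_i$ runs over the partitions obtained from $\sg$ by adding a single box; the denominator $c+(\rho_\ka-\rho_\sg)/(k-s)$ is precisely the normalized eigenvalue gap produced by this operator (the affine shift cancels in the difference). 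Solving the recurrence downward from the base case $\sg=\ka$---which fixes the overall normalization, since no box can be added while staying $\le\ka$---and folding in the binomial coefficients of the previous step reproduces the complex hypergeometric coefficients $[c]_{(\ka,\sg)}$ and hence the stated closed form.

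The main obstacle will be this recurrence step: one must compute the action of $\Delta$ on normalized Schur polynomials explicitly and check that the resulting coefficients match the weights $\smallsbinom{\ka}{\sg_i}\smallsbinom{\sg_i}{\sg}$ and the denominators $c+(\rho_\ka-\rho_\sg)/(k-s)$ appearing in the definition of $[c]_{(\ka,\sg)}$. This is a self-contained representation-theoretic and combinatorial calculation rather than a conceptual difficulty, and it is exactly the content worked out by James and Constantine \cite{jc1}; I would either cite their derivation directly or reprove it by induction on $k-s$ along the partial order $\sg\le\ka$, with the base case $\sg=\ka$ supplied by the leading-term normalization of the second paragraph.
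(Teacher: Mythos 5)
The paper offers no proof of this theorem: it is quoted as a known result of James and Constantine \cite{jc1}, so there is no internal argument to compare against. Your outline --- pushing the invariant measure forward to a complex Jacobi-type weight on the principal angles, establishing triangularity of the Schur expansion via the generalized binomial coefficients, and extracting the denominators $c+(\rho_\ka-\rho_\sg)/(k-s)$ from a Laplace--Beltrami eigenvalue recurrence --- is a faithful sketch of the James--Constantine derivation itself, and since you explicitly defer the one decisive computation (the action of the operator on normalized Schur polynomials) to that same reference, your route is in substance the same as the paper's.
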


The first few normalized Schur polynomials are:
\begin{align*}
X^*_0(y) & = 1 \\
X^*_1(y) & = \frac{1}{m}\sum_{i=1}^m y_i \\
X^*_{1,1}(y) & = \frac{1}{\binom{m}{2}} \sum_{i<j} y_iy_j \\
X^*_2(y) & = \frac{1}{\binom{m+1}{2}}\Big(\sum_{i=1}^m y_i^2 + \sum_{i<j}y_iy_j \Big).
\end{align*}
Up to normalization by a constant, the first few zonal orthogonal polynomials are:
\begin{align*}
Z_0(y) & = 1 \\
Z_1(y) & = nX^*_1(y) - m \\
Z_{1,1}(y) & = m(m-1) - 2(n-1)(m-1)X^*_1(y) + (n-1)(n-2)X^*_{1,1}(y) \\
Z_2(y) & = m(m+1) - 2(n+1)(m+1)X^*_1(y) + (n+1)(n+2)X^*_{2}(y).
\end{align*}
The correct normalizations satisfy 
\[
\ip{Z_{\mu,a}}{Z_{\mu,a}} = Z_\mu(1,1,\ldots,1) = \dim H_\mu.
\]
With the exception of the case $\mu = 0$ (which is normalized correctly in the formula above), normalizations for $Z_\mu$ will not play a role in the results which follow.

\section{Bounds}
\label{sec:bounds}

Recall that an $\scr{A}$-code is a collection $S$ of subspaces in $\gras{m}{n}$ such that $\tr(P_aP_b) \in \scr{A}$ for every $a \neq b$ in $S$. In this section, we find upper bounds on the size of an $\scr{A}$-code in terms of either the cardinality of $\scr{A}$ or its elements. A summary of the results for $\abs{\scr{A}} \leq 2$ is given in Table \ref{tab:codebounds}.

\begin{table}
\renewcommand{\arraystretch}{2.2}
\setlength{\tabcolsep}{10pt}
\begin{center}
\begin{tabular}{|c||c|c|}
\hline
$\scr{A}$ & $\{\al\}$ & $\{\al,\be\}$ \\
\hline
\rule{0pt}{37pt} \parbox{0.7in}{Absolute bound\\} & \raisebox{8pt}{$n^2$} & \raisebox{7pt}{$\qquad \qquad  \quad \; \dbinom{n^2}{2} \qquad (m > 1)$} \\
\hline 
\rule{0pt}{30pt} \parbox{0.7in}{Relative bound} & \raisebox{-2pt}{$\dfrac{n(m-\al)}{m^2-n\al}$} &  \raisebox{1pt}{$\dfrac{n(m-\al)(m-\be)}{m^2\left[\frac{(m+1)^2}{2(n+1)}+\frac{(m-1)^2}{2(n-1)}-(\al+\be) + \frac{n\al\be}{m^2}\right]}$} \\
\rule{0pt}{40pt} \parbox{0.7in}{Relative bound\\conditions} & $\al < \dfrac{m^2}{n}$ & $\al+\be \leq \dfrac{2(m^2n-4m+n)}{n^2-4}$, \\
\rule{0pt}{30pt} & & \raisebox{10pt}{$\al + \be - \dfrac{n\al\be}{m^2} < \dfrac{m^2n-2m+n}{n^2-1}$} \\
\hline
\end{tabular}
\caption{Upper bounds on $|S|$, when $S \sbs \gras{m}{n}$ is an $\scr{A}$-code.}
\label{tab:codebounds}
\end{center}
\end{table}

If $\scr{A} = \{\al_1,\ldots,\al_k\}$, then the \defn{annihilator} of $\scr{A}$ is the function 
\[
\ann_{\scr{A}}(x) := \prod_{i=1}^k (x-\al_i),
\]
The significance of the annihilator is that $\ann_{\scr{A}}(\tr(P_aP_b)) = 0$ for any $a \neq b$ in $S$. More generally, for any polynomial $f$, an \defn{$f$-code} is a collection $S$ of subspaces such that $f(\tr(P_a)) \neq 0$ and $f(\tr(P_aP_b)) = 0$ for every $a \neq b$ in $S$. If $\scr{A}$ is any set of angles and $f$ is the annihilator of $\scr{A}$, then an $\scr{A}$-code is also an $f$-code.

\begin{theorem}
\label{thm:abscodebnd}
If $S \sbs \gras{m}{n}$ is an $\scr{A}$-code, with $|\scr{A}| = k$, then
\[
|S| \leq \dim(\Hom_k(n)) \leq \binom{n^2+k-1}{k}.
\]
\end{theorem}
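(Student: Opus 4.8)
The plan is to run the standard Delsarte absolute-bound argument: exhibit $\abs{S}$ linearly independent functions inside $\Hom_k(n)$, one attached to each codeword, and then bound $\dim \Hom_k(n)$ by a monomial count. Concretely, let $f := \ann_{\scr{A}}$, a univariate polynomial of degree $k = \abs{\scr{A}}$, and for each $a \in S$ take its zonal polynomial $f_a(b) = f(\tr(P_aP_b))$. As explained in Section \ref{sec:zonal}, after homogenizing with the identity $\tr(P_b)/m = 1$ we may regard each $f_a$ as an element of $\Hom_k(n)$.

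First I would record the two evaluation properties that make the $f_a$ behave like a dual basis. Because an $\scr{A}$-code is in particular an $f$-code, we have $f(\tr(P_aP_b)) = 0$ whenever $a \neq b$, so $f_a(b) = 0$ for $b \neq a$; and $f_a(a) = f(\tr(P_a)) = f(m) \neq 0$. Thus $f_a(b) = f(m)\,\de_{a,b}$. Linear independence is then immediate: if $\sum_{a \in S} c_a f_a = 0$, evaluating at any $b \in S$ collapses the sum to $c_b f(m)$, and since $f(m) \neq 0$ we get $c_b = 0$ for every $b$. Hence $\{f_a : a \in S\}$ is a linearly independent subset of $\Hom_k(n)$, giving $\abs{S} \leq \dim \Hom_k(n)$.

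The second inequality is a pure dimension count. The space $\Hom_k(n)$ is spanned by the restrictions to $\gras{m}{n}$ of monomials homogeneous of degree $k$ in the $n^2$ entries of the projection matrix $P_a$; the number of such monomials is $\binom{n^2+k-1}{k}$, and passing to functions on the Grassmannian can only impose relations, so $\dim \Hom_k(n) \leq \binom{n^2+k-1}{k}$.

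The main obstacle is not any single computation but pinning down the setup precisely: I must confirm that $m \notin \scr{A}$ so that $f(m) \neq 0$ (this is exactly the $f$-code condition, since $\scr{A}$ collects only the values $\tr(P_aP_b)$ for $a \neq b$), and I must justify that each $f_a$ genuinely lands in $\Hom_k(n)$ rather than merely in the inhomogeneous span of powers of $\tr(P_aP_b)$—which is precisely what the homogenization remark in Section \ref{sec:zonal} supplies. Once those two points are secured, the independence argument and the monomial count are routine.
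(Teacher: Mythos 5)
Your proposal is correct and follows essentially the same route as the paper: take the annihilator of $\scr{A}$, homogenize its zonal polynomials into $\Hom_k(n)$, and use the evaluation property $f_a(b) = f(m)\,\de_{a,b}$ (with $f(m)\neq 0$ since $m\notin\scr{A}$) to get linear independence, followed by the standard monomial count for the dimension bound. The only difference is cosmetic---the paper states the argument for general $f$-codes and specializes to the annihilator, while you work with the annihilator directly---and you are slightly more careful than the paper about why $f(m)\neq 0$ and about the second inequality.
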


\begin{proof}
We prove more generally that if $S$ is an $f$-code, with $\deg(f) = k$, then $|S| \leq \dim(\Hom_k(n))$. The result then follows by taking $f$ to be the annihilator of $\scr{A}$. 

Consider the zonal polynomials $f_a(b) := f(\tr(P_aP_b))$, for $a \in S$. Note that $f_a$ is in $\Hom_k(n)$, since $f_a(b)$ is a degree-$k$ polynomial in the entries of $P_b$. Since $f_a(b) = 0$ for every $b \in S$ except $a$, and $f_a(a) \neq 0$, the set $\{f_a: a \in S\}$ is linearly independent. Thus the number of functions $|S|$ is at most the dimension of the space $\Hom_k(n)$. \qed
\end{proof}

\begin{corollary}
\label{cor:abscodebnd1}
Let $S$ be a collection of subspaces in $\gras{m}{n}$ such that $\tr(P_aP_b) = \al$ for all $a \neq b$ in $S$. Then
\[
|S| \leq n^2.
\]
\end{corollary}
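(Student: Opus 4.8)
The plan is to read off this result as the special case $\abs{\scr{A}} = 1$ of Theorem \ref{thm:abscodebnd}. Setting $\scr{A} = \{\al\}$, the hypothesis says precisely that $S$ is an $\scr{A}$-code with $\abs{\scr{A}} = 1$, so the theorem immediately gives $\abs{S} \leq \dim(\Hom_1(n)) \leq \binom{n^2}{1} = n^2$.

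To make the argument self-contained I would spell out the degree-one instance of the theorem's proof. The annihilator of $\scr{A}$ is $\ann_{\scr{A}}(x) = x - \al$, which has degree $k=1$. For each $a \in S$ form the zonal polynomial $f_a(b) := \tr(P_aP_b) - \al$; this is an affine, hence (after pushing up the constant as in the theorem) homogeneous degree-one function of the entries of $P_b$, so $f_a \in \Hom_1(n)$. Since $\tr(P_aP_b) = \al$ for every $b \neq a$ in $S$ while $f_a(a) = \tr(P_a) - \al = m - \al$, and $m - \al > 0$ because distinct subspaces satisfy $\tr(P_aP_b) < m$, the functions $\{f_a : a \in S\}$ are linearly independent in $\Hom_1(n)$. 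Hence $\abs{S} \leq \dim \Hom_1(n)$.

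The one remaining point, and the only place any genuine (if tiny) verification is needed, is the bound $\dim \Hom_1(n) \leq n^2$. This holds because $\Hom_1(n)$ consists of linear combinations of the $n^2$ entries of the projection matrix $P_b$, so its dimension is at most $n^2 = \binom{n^2+1-1}{1}$, matching the binomial estimate in Theorem \ref{thm:abscodebnd}. I expect no real obstacle here: the entire content is the linear-independence argument already established for the theorem, specialized to $k = 1$, together with the elementary observation that $\al < m$ for a genuine pair of distinct subspaces guarantees $f_a(a) \neq 0$.
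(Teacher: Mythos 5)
Your proposal is correct and follows exactly the paper's route: the paper's proof of this corollary is the one-line observation that the degree-one annihilator of $\al$ induces zonal polynomials in $\Hom_1(n)$, so Theorem \ref{thm:abscodebnd} applies with $k=1$. Your expanded version (linear independence of the $f_a$, the check that $f_a(a)=m-\al\neq 0$, and $\dim\Hom_1(n)\leq n^2$) just spells out the same specialization in more detail.
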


\begin{proof}
Use Theorem \ref{thm:abscodebnd} with the degree-$1$ annihilator of $\al$, which induces zonal polynomials in $\Hom_1(n)$. \qed
\end{proof}

Since $f_a(b)$ is also a degree-$k$ symmetric polynomial in the principal angles of $a$ and $b$, it follows that $f_a$ is also in $H_k(m,n)$. Then by the same argument as in Theorem \ref{thm:abscodebnd}, we have 
\begin{corollary}
\label{cor:abscodebnd}
If $S \sbs \gras{m}{n}$ is an $\scr{A}$-code, with $|\scr{A}| = k$, then
\[
|S| \leq \dim(H_k(m,n)).
\]
\end{corollary}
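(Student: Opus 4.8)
The plan is to reproduce the proof of Theorem \ref{thm:abscodebnd} almost verbatim, replacing the ambient space $\Hom_k(n)$ by $H_k(m,n)$; the single new ingredient is to check that the zonal polynomials $f_a$ actually lie in $H_k(m,n)$. As in that theorem, I would first pass to the annihilator $f := \ann_{\scr{A}}$, a univariate polynomial of degree $k$, so that for distinct $a,b \in S$ we have $f(\tr(P_aP_b)) = 0$, while $f(\tr(P_a)) = f(m) \neq 0$ since each element of $\scr{A}$ arises as $\tr(P_aP_b)$ for $a \neq b$ and is therefore strictly less than $m$.

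For each $a \in S$ set $f_a(b) := f(\tr(P_aP_b)) = \sum_{i=0}^k f_i \tr(P_aP_b)^i$. The key step is the observation, recorded in Section \ref{sec:orbitals}, that $\tr(P_aP_b)$ is precisely the sum $y_1 + \cdots + y_m$ of the principal angles between $a$ and $b$. Hence $f_a(b)$ is a symmetric polynomial of degree at most $k$ in the principal angles $y_1,\ldots,y_m$. By the characterization of $H_k(m,n)$ stated in Section \ref{sec:symmetric}---that $H_k(m,n) = \bigoplus_{|\mu|\le k,\ \len(\mu)\le m} H_\mu(n)$ is the span of the degree-$\le k$ symmetric polynomials in the principal angles to a fixed subspace---it follows that $f_a \in H_k(m,n)$. (If a homogeneous statement is preferred, the exponents can be ``pushed up'' using $1 = \tr(P_b)/m$, exactly as in the discussion preceding the zonal polynomials.)

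Once the membership $f_a \in H_k(m,n)$ is established, the linear-independence argument of Theorem \ref{thm:abscodebnd} transfers without change. Since $f_a(b) = 0$ for every $b \in S$ with $b \neq a$ while $f_a(a) = f(m) \neq 0$, evaluating any relation $\sum_{a \in S} c_a f_a = 0$ at a fixed $b$ yields $c_b f_b(b) = 0$, hence $c_b = 0$; thus $\{f_a : a \in S\}$ is a linearly independent subset of $H_k(m,n)$, and therefore $|S| \le \dim H_k(m,n)$.

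I expect the only point requiring care to be the membership $f_a \in H_k(m,n)$, since it rests on the identification of $H_k(m,n)$ with the span of symmetric polynomials in the principal angles, a fact announced in Section \ref{sec:symmetric} but not fully proved at this stage. Granting that identification, the corollary is immediate; and because $H_k(m,n)$ is a subspace of $\Hom_k(n)$, this bound is never weaker than, and is generally sharper than, the one in Theorem \ref{thm:abscodebnd}.
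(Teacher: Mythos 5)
Your proposal matches the paper's argument: the paper also observes that $f_a(b)$ is a degree-$k$ symmetric polynomial in the principal angles of $a$ and $b$, hence lies in $H_k(m,n)$, and then reruns the linear-independence argument of Theorem \ref{thm:abscodebnd}. Your additional remarks (that $f(m)\neq 0$ because $\tr(P_aP_b)<m$ for $a\neq b$, and that the membership step rests on the identification of $H_k(m,n)$ with the span of symmetric polynomials in the principal angles) are correct and only make explicit what the paper leaves implicit.
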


If equality holds, then the functions $f_a$ form a basis for the space. Moreover, the space $H_k(m,n)$ is exactly the space of functions on $S$. 

Theorem \ref{thm:abscodebnd} and Corollary \ref{cor:abscodebnd} are called \defn{absolute bounds} for Grassmannian codes, because the bounds depend only on the number of different inner product values that occur in $S$. When $m = 1$ these bounds reduce to the absolute bounds of Delsarte, Goethals and Seidel \cite[Theorem 6.1]{dgs}. There is also a \defn{relative bound}, which depends on the actual values of the inner products and is sometimes tighter.

\begin{theorem}
\label{thm:relcodebnd}
Let $f(y_1,\ldots,y_m) \in \re[y_1,\ldots,y_m]$ be a symmetric polynomial such that $f = \sum_{\mu}c_{\mu} Z_{\mu}$, where $Z_{\mu}$ is a zonal orthogonal polynomial, and each $c_{\mu} \geq 0$. Further assume that $c_0$ is strictly positive. If $S$ is a set of subspaces in $\gras{m}{n}$ such that $f_a(b) := f(y_1(a,b),\ldots,y_m(a,b))$ is nonpositive for every $a \neq b$ in $S$, then 
\[
|S| \leq \frac{f(1,\ldots,1)}{c_0}.
\]
\end{theorem}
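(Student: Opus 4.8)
The plan is to exploit the two structural properties of zonal polynomials established earlier: the positivity of the full sum in Lemma \ref{lem:possum}, and the normalization $Z_0 \equiv 1$ together with $Z_\mu(1,\ldots,1) = \dim H_\mu$. The key idea of Delsarte LP theory is to estimate the double sum $\sum_{a,b \in S} f_a(b)$ in two different ways and compare. First I would split the double sum into its diagonal part (where $a=b$) and its off-diagonal part (where $a \neq b$), writing
\[
\sum_{a,b \in S} f_a(b) = \sum_{a \in S} f(1,\ldots,1) + \sum_{\substack{a,b \in S\\ a \neq b}} f_a(b) = |S|\,f(1,\ldots,1) + \sum_{\substack{a,b \in S\\ a \neq b}} f_a(b).
\]
Here I use that when $a=b$ the principal angles are all $1$ (since $P_aP_a = P_a$ has eigenvalues equal to $1$ on $a$), so $f_a(a) = f(1,\ldots,1)$. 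By the hypothesis that $f_a(b) \leq 0$ for every $a \neq b$, the off-diagonal sum is nonpositive, giving the upper estimate
\[
\sum_{a,b \in S} f_a(b) \leq |S|\,f(1,\ldots,1).
\]

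Next I would produce a lower estimate for the same double sum using the expansion $f = \sum_\mu c_\mu Z_\mu$. Summing over all pairs and interchanging the order of summation yields
\[
\sum_{a,b \in S} f_a(b) = \sum_\mu c_\mu \sum_{a,b \in S} Z_\mu(a,b).
\]
Now Lemma \ref{lem:possum} guarantees $\sum_{a,b \in S} Z_\mu(a,b) \geq 0$ for every $\mu$, and each coefficient $c_\mu \geq 0$ by hypothesis, so every term in the $\mu$-sum is nonnegative. I can therefore discard all terms except the one coming from the trivial representation $\mu = 0$. Since $Z_0 \equiv 1$, its contribution is $c_0 \sum_{a,b \in S} 1 = c_0 |S|^2$. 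This gives the lower estimate
\[
\sum_{a,b \in S} f_a(b) \geq c_0 |S|^2.
\]

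Combining the two estimates produces $c_0 |S|^2 \leq |S|\,f(1,\ldots,1)$, and since $c_0 > 0$ and $|S| > 0$ I may divide through by $c_0 |S|$ to obtain the claimed bound $|S| \leq f(1,\ldots,1)/c_0$. The routine points to verify are that $f_a$ does lie in a suitable space so the expansion $f = \sum_\mu c_\mu Z_\mu$ transfers correctly to the zonal functions $Z_{\mu,a}$ evaluated at $b$ (i.e.\ that summing the univariate/symmetric expansion commutes with the pairing used in Lemma \ref{lem:possum}), and that the $a=b$ diagonal genuinely contributes $f(1,\ldots,1)$. I do not expect a serious obstacle here: the whole argument is a clean two-sided estimate, and the only place demanding care is the bookkeeping that links the abstract expansion $f = \sum_\mu c_\mu Z_\mu$ to the concrete values $Z_\mu(a,b) = \ip{Z_{\mu,a}}{Z_{\mu,b}}$, so that the nonnegativity from Lemma \ref{lem:possum} applies term by term. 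The strict positivity of $c_0$ is exactly what is needed to rule out a degenerate division at the end.
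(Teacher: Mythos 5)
Your proposal is correct and follows essentially the same argument as the paper: bound the double sum $\sum_{a,b\in S} f_a(b)$ from above by its diagonal contribution $|S|\,f(1,\ldots,1)$ using the nonpositivity hypothesis, and from below by $c_0|S|^2$ using the expansion $f=\sum_\mu c_\mu Z_\mu$ together with Lemma \ref{lem:possum} and $Z_0\equiv 1$. The only cosmetic difference is that the paper first sums over $b$ for fixed $a$ and then averages over $a$, whereas you work with the full double sum throughout; the inequalities are identical.
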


\begin{proof}
Since $f_a(b) \leq 0$ for $b \neq a$, summing over all $b \in S$, we have
\[
\sum_{b \in S} f_a(b) \leq f_a(a) = f(1,\ldots,1).
\]
Then averaging over all $a \in S$,
\begin{align*}
f(1,\ldots,1) & \geq \frac{1}{|S|}\sum_{a,b \in S} f_a(b) \\
& = \frac{1}{|S|} \sum_\mu c_\mu \sum_{a,b \in S} Z_\mu(a,b). 
\end{align*}
By Lemma \ref{lem:possum}, the inner sum is non-negative for $\mu \neq 0$. If $\mu = 0$, then $Z_0(a,b) = 1$ for all $a$ and $b$, and hence,
\begin{align*}
f(1) & \geq \frac{1}{|S|} c_0 \sum_{a,b \in S} 1 \\
& = c_0 |S|. \eqed
\end{align*} 
\end{proof}

Equality holds if and only if $f_a(b) = 0$ for every $a \neq b \in S$ and for each $\mu \neq 0$, we have either $c_\mu = 0$ or $\sum_{a \in S} Z_{\mu,a} = 0$. (We will see in Section \ref{sec:designs} that when $c_\mu > 0$ for all $\abs{\mu} \leq \deg(f)$, this implies that we have a Grassmannian $t$-design.)

By way of example, we consider the case of an $\{\al\}$-code in detail.

\begin{corollary}
\label{cor:onebound}
Let $S$ be a subset of $\gras{m}{n}$ such that $\tr(P_aP_b) = \al$ for all $a \neq b$ in $S$, and $\al < m^2/n$. Then
\[
|S| \leq \frac{n(m-\al)}{m^2-n\al}.
\]
\end{corollary}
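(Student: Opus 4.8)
The plan is to derive this as a direct application of the relative bound, Theorem \ref{thm:relcodebnd}, using the degree-one annihilator of $\scr{A} = \{\al\}$, namely the symmetric polynomial $f := t - \al$, where $t = \tr(P_aP_b) = \sum_{i=1}^m y_i$ is the sum of the principal angles. To invoke that theorem I must exhibit $f$ as a combination $\sum_\mu c_\mu Z_\mu$ of zonal orthogonal polynomials with every $c_\mu \geq 0$ and $c_0 > 0$, and I must check that $f_a(b) \leq 0$ on the code. Since the code is a one-distance set, only $Z_0$ and $Z_1$ will be needed.

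First I would use the explicit forms $Z_0 = 1$ and $Z_1 = n X^*_1 - m$ together with $X^*_1(y) = \tfrac{1}{m}\sum_i y_i = \tfrac{1}{m}\tr(P_aP_b)$, so that as a function of the single inner-product value $t$ we have $Z_1 = (nt - m^2)/m$, and hence $t = \tfrac{m}{n} Z_1 + \tfrac{m^2}{n} Z_0$. Substituting gives the decomposition
\[
f = t - \al = \frac{m}{n}\, Z_1 + \left(\frac{m^2}{n} - \al\right) Z_0 .
\]
I would then set $c_1 := m/n > 0$ and $c_0 := m^2/n - \al$, observing that $c_0 > 0$ exactly because of the hypothesis $\al < m^2/n$; this is the only place the hypothesis enters. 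Since $\tr(P_aP_b) = \al$ for $a \neq b$, we have $f_a(b) = f(\al) = 0 \leq 0$, so the hypothesis of Theorem \ref{thm:relcodebnd} is met. The theorem then gives $|S| \leq f(1,\ldots,1)/c_0$, and evaluating the all-ones point (the principal angles of $a$ with itself, where $t = \tr(P_a^2) = \tr(P_a) = m$) yields $f(1,\ldots,1) = m - \al$, so
\[
|S| \leq \frac{m-\al}{\,m^2/n - \al\,} = \frac{n(m-\al)}{m^2 - n\al}.
\]

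The computation itself is routine; the only point requiring care—more bookkeeping than genuine obstacle—is reconciling the normalization of $Z_1$ used here with the inner-product normalization demanded by Lemma \ref{lem:possum}. The nonnegativity $\sum_{a,b\in S} Z_1(a,b) \geq 0$ that the proof of Theorem \ref{thm:relcodebnd} borrows from Lemma \ref{lem:possum} holds for the polynomial above because $Z_1(1,\ldots,1) = n - m > 0$ while $\dim H_1 = n^2 - 1 > 0$, so the stated $Z_1$ is a strictly positive scalar multiple of the correctly normalized zonal polynomial. I would likewise flag that the evaluation $f(1,\ldots,1)$ corresponds to $t = m$ rather than $t = 1$, since the principal angles of a subspace with itself are all equal to $1$.
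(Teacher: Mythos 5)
Your proof is correct and follows essentially the same route as the paper: decompose the degree-one annihilator $f = \sum_i y_i - \al$ as a nonnegative combination of $Z_0$ and $Z_1$ (your $c_1 = m/n$ times $nX^*_1 - m$ is exactly the paper's $Z_1(y) = \sum_i y_i - m^2/n$ up to the same positive rescaling), identify $c_0 = m^2/n - \al > 0$ from the hypothesis, and apply Theorem \ref{thm:relcodebnd} with $f(1,\ldots,1) = m - \al$. Your added remarks on the scale-invariance of the positivity condition in Lemma \ref{lem:possum} and on evaluating at the all-ones principal-angle vector are correct and only make explicit what the paper leaves implicit.
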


\begin{proof}
The first two zonal orthogonal polynomials are $Z_0(y) = 1$ and (up to normalization) $Z_1(y) = \sum_{i=1}^my_i - m^2/n$. The annihilator for $\al$ is the polynomial $f(x) = x - \al$, which induces the zonal polynomial at subspace $a$ given by 
\[
f_{a}(b) = \tr(P_aP_b) - \frac{\al\tr(P_b)}{m} = \sum_{i=1}^m y_i - \al,
\]
for principle angles $y_1,\ldots,y_m$ of $a$ and $b$. Thus we may write
\begin{align*}
f(y_1,\ldots,y_m) & = \sum_{i=1}^m y_i - \al \\
& = c_1Z_{1}(y) + \left(\frac{m^2}{n}-\al\right)Z_{0}(y).
\end{align*}
Applying Theorem \ref{thm:relcodebnd}, we find that
\[
|S| \leq \frac{f(1,\ldots,1)}{c_0} = \frac{m-\al}{m^2/n-\al}. \eqed
\]
\end{proof}
When $m = 1$, we recover Delsarte, Goethals and Seidel's bound for complex equiangular lines: 
\[
|S| \leq \frac{n(1-\al)}{1-n\al}.
\]

Similarly, using the zonal orthogonal polynomials $Z_0,Z_1,Z_{1,1}$ and $Z_2$, we get a bound on the size of a subset containing two inner products, say $\al$ and $\be$.

\begin{corollary}
\label{cor:twobound}
Let $S$ be a subset of $\gras{m}{n}$ such that $\tr(P_aP_b) \in {\al,\be}$ for all $a \neq b$ in $S$. Further assume that
\begin{align*}
\al+\be & \leq \frac{2(m^2n-4m+n)}{n^2-4}, \\
\al + \be - \frac{n\al\be}{m^2} & < \frac{m^2n-2m+n}{n^2-1}.
\end{align*}
Then
\[
|S| \leq \frac{n(m-\al)(m-\be)}{m^2\left[\frac{(m+1)^2}{2(n+1)}+\frac{(m-1)^2}{2(n-1)}-(\al+\be) + \frac{n\al\be}{m^2}\right]}.
\]
\end{corollary}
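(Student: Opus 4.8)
The plan is to mimic the proof of Corollary~\ref{cor:onebound}, now using the degree-$2$ annihilator and the four zonal orthogonal polynomials of degree at most $2$. First I would set $f(x) := (x-\al)(x-\be)$, the annihilator of $\{\al,\be\}$; since $S$ is a two-distance set, the induced zonal polynomial $f_a(b) = f(\tr(P_aP_b))$ vanishes (hence is nonpositive) for every $a \neq b$ in $S$. Rewriting $f_a$ as a symmetric polynomial in the principal angles $y_1,\ldots,y_m$ of $a$ and $b$, and homogenizing via $\tr(P_b) = m$ as in the definition of the zonal polynomials, I obtain
\[
f(y_1,\ldots,y_m) = \Big(\sum_i y_i\Big)^2 - (\al+\be)\sum_i y_i + \al\be = \sum_i y_i^2 + 2\sum_{i<j} y_iy_j - (\al+\be)\sum_i y_i + \al\be.
\]
The whole problem is then to express this in the zonal orthogonal basis $f = c_0 Z_0 + c_1 Z_1 + c_{1,1} Z_{1,1} + c_2 Z_2$ with all $c_\mu \geq 0$ and $c_0 > 0$, and to invoke Theorem~\ref{thm:relcodebnd}.

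Next I would perform the change of basis. Using the listed normalized Schur polynomials, $f$ rewrites as
\[
f = \binom{m+1}{2} X^*_2 + \binom{m}{2} X^*_{1,1} - (\al+\be)\,m\,X^*_1 + \al\be\,X^*_0,
\]
after substituting $\sum_i y_i^2 = \binom{m+1}{2}X^*_2 - \binom{m}{2}X^*_{1,1}$ and $\sum_{i<j}y_iy_j = \binom{m}{2}X^*_{1,1}$. Inverting the four displayed relations for $Z_0,Z_1,Z_{1,1},Z_2$ (i.e.\ solving for $X^*_0,X^*_1,X^*_{1,1},X^*_2$ in terms of the $Z$'s) and collecting coefficients yields the $c_\mu$ explicitly. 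The coefficients of $Z_2$ and $Z_{1,1}$ come only from the $X^*_2$ and $X^*_{1,1}$ terms, namely $c_2 = \binom{m+1}{2}/[(n+1)(n+2)]$ and $c_{1,1} = \binom{m}{2}/[(n-1)(n-2)]$, which are nonnegative automatically (with $c_{1,1}=0$ when $m=1$).

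Finally I would read off the two remaining coefficients and match them to the hypotheses. A short computation gives $c_1 = \frac{m}{n}\big[\frac{(m+1)^2}{n+2} + \frac{(m-1)^2}{n-2} - (\al+\be)\big]$, so $c_1 \geq 0$ is equivalent to the first stated condition once one simplifies $\frac{(m+1)^2}{n+2} + \frac{(m-1)^2}{n-2} = \frac{2(m^2n-4m+n)}{n^2-4}$. Likewise $c_0 = \frac{m^2}{n}\big[\frac{(m+1)^2}{2(n+1)} + \frac{(m-1)^2}{2(n-1)} - (\al+\be) + \frac{n\al\be}{m^2}\big]$, and $c_0 > 0$ is exactly the second (strict) condition after simplifying $\frac{(m+1)^2}{2(n+1)} + \frac{(m-1)^2}{2(n-1)} = \frac{m^2n-2m+n}{n^2-1}$. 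With these verified, Theorem~\ref{thm:relcodebnd} gives $|S| \leq f(1,\ldots,1)/c_0$; since $f(1,\ldots,1) = m^2 - (\al+\be)m + \al\be = (m-\al)(m-\be)$, dividing by the above $c_0$ produces precisely the claimed bound.

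The conceptual content is light---everything reduces to Theorem~\ref{thm:relcodebnd}---so the main obstacle is purely the bookkeeping of the basis change: correctly inverting the $Z$-to-$X^*$ relations and carrying the resulting rational expressions through to the two closed-form coefficients $c_1$ and $c_0$. The two algebraic identities above, which combine the $(n\pm2)$ and $(n\pm1)$ denominators, are what make the positivity conditions collapse to the compact inequalities stated, and getting those simplifications right is the only place an error is likely to creep in.
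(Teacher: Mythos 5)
Your proposal is correct and is exactly the argument the paper intends: the paper gives no written proof of this corollary, merely remarking that it follows ``similarly'' to Corollary~\ref{cor:onebound} using $Z_0, Z_1, Z_{1,1}, Z_2$, which is precisely the basis-change computation you carry out. Your explicit coefficients $c_2, c_{1,1}, c_1, c_0$ and the two simplifying identities check out, and they reproduce the stated hypotheses and bound via Theorem~\ref{thm:relcodebnd}.
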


When $m = 1$ this reduces to the Delsarte, Goethals and Seidel bound of 
\[
|S| \leq \frac{n(n+1)(1-\al)(1-\be)}{2-(n+1)(\al+\be) + n(n+1)\al\be}
\]
for lines in complex projective space $\cx P^{n-1}$.

\section{Other bounds}
\label{sec:otherbounds}

Certain cases of equality in Corollaries \ref{cor:onebound} and \ref{cor:twobound} also achieve equality for bounds on the size of the largest angle in a set of subspaces. For real Grassmannians, Conway, Hardin and Sloane \cite{chs} call these bounds the \defn{simplex} and \defn{orthoplex} bounds. Here we give their complex analogues. 

Recall that if $P_a$ be the $n \times n$ projection matrix for $a \in \gras{m}{n}$, then $P_a$ is Hermitian with trace $m$, so $P'_a = P_a - mI/n$ lies in a real space of dimension $n^2-1$. Moreover $||P'_a||^2 := \tr(P'_aP'_a) = m(1-m/n)$, so $P'_a$ is embedded onto a sphere of radius $\sqrt{m(1-m/n)}$ in $\re^{n^2-1}$. Further recall that the chordal distance on $\gras{m}{n}$ is defined by 
\begin{align*}
d_c(a,b)^2 & = m - \tr(P_aP_b) \\
& = \frac{1}{2}||P_a - P_b||^2 = \frac{1}{2}||P'_a - P'_b||^2.
\end{align*}
With this distance, the Grassmannians are isometrically embedded into $\re^{n^2-1}$. The ``Rankin bounds" given in Theorem \ref{thm:rankin} below (see \cite[Theorems 6.1.1 \& 6.1.2]{bor}) are bounds on the minimum distance between points on a real sphere as a function of the number of points and the dimension of the space. An \defn{equatorial simplex} refers to a set of $N$ points on the unit sphere that form a simplex in a hyperplane of dimension $N-1$.

\begin{theorem}
\label{thm:rankin}
Given $N$ points on a sphere of radius $r$ in $\re^D$, the minimum distance $d$ between any two points satisfies 
\[
d \leq r\sqrt{\frac{2N}{N-1}}.
\]
Equality requires $N \leq D+1$ and occurs if and only if the points form a regular equatorial simplex. For $N > D+1$, the minimum distance satisfies
\[
d \leq r\sqrt{2},
\]
and equality requires $N \leq 2D$. When $N = 2D$, equality occurs if and only if the points are the vertices of a regular orthoplex.
\end{theorem}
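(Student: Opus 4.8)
The plan is to view the $N$ points as vectors $x_1,\ldots,x_N$ from the center of the sphere, so that $\|x_i\| = r$, and to convert the distance constraints into constraints on inner products. Since
\[
\|x_i - x_j\|^2 = 2r^2 - 2\langle x_i,x_j\rangle,
\]
the hypothesis that every pairwise distance is at least $d$ is equivalent to $\langle x_i,x_j\rangle \leq r^2 - d^2/2$ for all $i \neq j$. Both bounds then follow from elementary facts about the Gram matrix of the $x_i$, so essentially no geometry beyond this translation is needed.

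For the first bound I would expand
\[
0 \leq \Big\|\sum_i x_i\Big\|^2 = N r^2 + \sum_{i \neq j}\langle x_i,x_j\rangle,
\]
and bound each of the $N(N-1)$ off-diagonal terms by $r^2 - d^2/2$. Rearranging yields $d^2 \leq 2Nr^2/(N-1)$, which is the claimed inequality. Equality forces simultaneously $\sum_i x_i = 0$ and $\langle x_i,x_j\rangle = r^2 - d^2/2$ for every $i \neq j$, i.e. all pairwise distances equal $d$. An equidistant set whose centroid is the origin is a regular simplex, which spans an affine subspace of dimension $N-1$; hence $N \leq D+1$ and the configuration is a regular equatorial simplex, exactly as stated.

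For the regime $N > D+1$ I would argue by contradiction using two classical lemmas on vectors with non-positive inner products. If $d > r\sqrt2$ then $\langle x_i,x_j\rangle < 0$ strictly for all $i \neq j$; I invoke Lemma~A, that at most $D+1$ vectors in $\re^D$ can have pairwise strictly negative inner products, contradicting $N > D+1$, so $d \leq r\sqrt2$. Lemma~A is a short induction on $D$: projecting $v_1,\ldots,v_{N-1}$ onto $v_N^\perp$ via $w_i = v_i - \frac{\langle v_i,v_N\rangle}{\|v_N\|^2}v_N$, one checks the projections are nonzero and retain strictly negative inner products, giving $N-1 \leq (D-1)+1$. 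If instead $d = r\sqrt2$, then $\langle x_i,x_j\rangle \leq 0$ for all $i \neq j$, and Lemma~B, that at most $2D$ vectors in $\re^D$ can have pairwise non-positive inner products, gives $N \leq 2D$.

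The cardinality bound $N \le 2D$ in Lemma~B is again an induction on $D$: projecting onto $v_1^\perp$, a vector projects to $0$ precisely when it is the antipode $-v_1$ (here using $\|v_i\| = \|v_1\|$ together with $\langle v_1,v_i\rangle \le 0$), and the nonzero projections once more have pairwise non-positive inner products, yielding $N - 1 \le 1 + 2(D-1)$. The genuinely delicate part, and the step I expect to be the main obstacle, is the rigidity statement that $N = 2D$ forces a \emph{regular orthoplex}: one must show that each point's antipode also lies in the set and that the $D$ lines so obtained are mutually orthogonal. This means tracking the equality conditions through the induction --- the projected configuration must itself be an orthoplex, and one must then verify that the components of the remaining vectors along $v_1$ all vanish, so that the lift is again an orthoplex. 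Establishing this rigidity cleanly, rather than merely recovering the count $N = 2D$, is where the real care is required; the rest of the argument is routine.
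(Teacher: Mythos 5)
The paper offers no proof of this statement: it is Rankin's classical bound, quoted with a citation to \cite[Theorems 6.1.1 \& 6.1.2]{bor}, so there is no internal argument to measure yours against. On its own terms, your proposal follows the standard route and most of it is correct. The expansion of $\bigl\|\sum_i x_i\bigr\|^2$ combined with $\langle x_i,x_j\rangle \le r^2 - d^2/2$ gives the simplex bound, and the equality analysis (vanishing centroid plus all pairwise inner products equal) does force a regular simplex whose affine hull is an $(N-1)$-dimensional subspace through the centre, hence $N\le D+1$ and the equatorial condition. Lemma~A (at most $D+1$ vectors in $\re^D$ with pairwise strictly negative inner products) and Lemma~B (at most $2D$ nonzero vectors with pairwise non-positive inner products) are exactly the right classical facts, and your inductive proofs of both cardinality bounds are sound.

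The one genuine gap is the one you flag yourself: the claim that $N=2D$ forces a regular orthoplex is announced as the delicate step but never carried out. It can be closed without tracking a ``lift'' through the induction. In the proof of Lemma~B, equality $N=2D$ forces exactly one vector to project to zero on $v_1^\perp$, and since all vectors have norm $r$ that vector must be $-v_1$; but then every remaining $v_j$ satisfies both $\langle v_j,v_1\rangle\le 0$ and $\langle v_j,-v_1\rangle\le 0$, hence $\langle v_j,v_1\rangle = 0$. So the other $2D-2$ vectors already lie in $v_1^\perp$, and induction gives $D$ mutually orthogonal antipodal pairs directly. (You also need, for the counting, that two distinct equal-norm vectors cannot have the same projection; this follows from $\langle v_i+v_j,\,v_i-v_j\rangle=0$ together with the sign constraints.) Finally, both ``if and only if'' clauses require the easy converse verifications that a regular equatorial simplex and a regular orthoplex actually attain the stated distances; you omit these, but they are one-line computations. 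With these additions your argument is a complete proof --- which is more than the paper itself supplies.
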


Conway, Hardin and Sloane \cite{chs} apply these bounds to get the simplex and orthoplex bounds for real Grassmannians: we can do the same for the complex Grassmannians.

\begin{corollary}
\label{cor:simortho}
Given a set $S$ points in $\gras{m}{n}$, the largest inner product value $\al = \ip{a}{b}$ between any two points satisfies 
\begin{equation}
\al \geq m\frac{m|S|-n}{n|S|-n}.
\label{eqn:albound}
\end{equation}
Equality requires $|S| \leq n^2$ and occurs if and only if the points form a regular equatorial simplex in $\re^{n^2-1}$. For $|S| > n^2$, the largest inner product $\be$ satisfies
\begin{equation}
\be \geq \frac{m^2}{n},
\label{eqn:albebound}
\end{equation}
and equality requires $|S| \leq 2(n^2-1)$. Equality occurs if the points are the $2(n^2-1)$ vertices of a regular orthoplex in $\re^{n^2-1}$.
\end{corollary}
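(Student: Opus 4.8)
The plan is to reduce the corollary directly to the Rankin bounds of Theorem~\ref{thm:rankin} via the isometric embedding $a \mapsto P'_a = P_a - mI/n$ into the sphere of radius $r = \sqrt{m(1-m/n)}$ in $\re^{n^2-1}$, using the distance identity $d_c(a,b)^2 = \tfrac12\|P'_a - P'_b\|^2 = m - \tr(P_aP_b)$ established just above the statement. Under this embedding, $D = n^2-1$ and the minimum Euclidean distance $d$ between the $N = |S|$ embedded points corresponds to the \emph{largest} inner product $\al = \max_{a\neq b}\tr(P_aP_b)$, since $d^2 = 2(m-\al)$. So the two Rankin regimes become the two claimed inequalities.

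First I would handle the simplex bound. Substituting $r^2 = m(1-m/n)$ and $d^2 = 2(m-\al)$ into $d \leq r\sqrt{2N/(N-1)}$, squaring both sides gives
\[
2(m-\al) \leq m\!\left(1-\frac{m}{n}\right)\frac{2N}{N-1},
\]
and solving this linear inequality for $\al$ (with $N = |S|$) should yield exactly $\al \geq m\,\frac{m|S|-n}{n|S|-n}$ after clearing denominators. The condition $N \leq D+1$ translates to $|S| \leq n^2$, and the equality case transfers verbatim: equality holds iff the $P'_a$ form a regular equatorial simplex in $\re^{n^2-1}$. Second, for the orthoplex regime I would apply $d \leq r\sqrt{2}$, i.e. $2(m-\al) \leq 2m(1-m/n)$, which simplifies immediately to $\al \geq m^2/n$; this is the bound \eqref{eqn:albebound}. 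The regime condition $N > D+1$ is $|S| > n^2$, and the Rankin equality constraint $N \leq 2D$ becomes $|S| \leq 2(n^2-1)$, with the orthoplex configuration giving equality.

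The calculations here are entirely routine substitution and rearrangement, so there is no real mathematical obstacle in the core argument; the work is just bookkeeping with $r$, $d$, $D$, and $N$. The one genuinely delicate point is the geometric realizability of equality: Theorem~\ref{thm:rankin} guarantees a regular equatorial simplex or orthoplex as a configuration \emph{in the abstract sphere} $\re^{n^2-1}$, but the embedded set $\{P'_a\}$ lives only in the image of $\gras{m}{n}$, a proper subvariety of that sphere. Thus the simplex equality statement is a genuine biconditional only because any equitable simplex arising from a code must automatically lie in this image, whereas for the orthoplex I would state equality as a sufficient condition (``occurs if''), exactly as the corollary does, since not every orthoplex in $\re^{n^2-1}$ need consist of images of $m$-dimensional projections. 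I would close the proof by noting that the existence of genuine Grassmannian simplices and orthoplexes attaining these bounds is deferred to the examples of Section~\ref{sec:examples}, so that the embedding argument establishes the inequalities and the stated equality conditions without requiring us to exhibit the configurations here. \qed
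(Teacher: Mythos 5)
Your proposal is correct and follows exactly the route the paper intends: the paper gives no explicit proof of this corollary, merely noting that the isometric embedding $a \mapsto P'_a = P_a - mI/n$ onto the sphere of radius $\sqrt{m(1-m/n)}$ in $\re^{n^2-1}$ reduces it to the Rankin bounds of Theorem~\ref{thm:rankin}, and your substitutions and algebra carry this out correctly. The only quibble is that your worry about realizability of the simplex inside the image of $\gras{m}{n}$ is unnecessary for the biconditional as stated --- both directions of the equality claim concern the embedded points themselves, so they transfer verbatim from Rankin's theorem.
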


If $S$ is an $\{\al\}$-code, then solving inequality \eqref{eqn:albound} for $|S|$ recovers the relative bound in Corollary \ref{cor:onebound}. Moreover, if $|S| = n^2$ (equality in the absolute bound of Corollary \ref{cor:abscodebnd1}), then
\[
\al = \frac{m(mn-1)}{n^2-1}.
\]
On the other hand, if $S$ is a $\{0,m^2/n\}$-code, and $m = n/2$, then the relative bound in Corollary \ref{cor:twobound} implies that 
\[
|S| \leq 2(n^2-1),
\]
which corresponds to equality in the orthoplex bound \eqref{eqn:albebound}.

\section{Examples}
\label{sec:examples}

In this section we give examples demonstrating the tightness of the bounds in the previous sections.

When the rank $m$ of the Grassmannian subspaces is $1$, we recover all the classical results of Delsarte, Goethals and Seidel \cite{dgs} for lines in complex projective space: their paper gives several examples of bounds with equality. In particular, the upper bound for $\{\al\}$-codes in $\cx P^{n-1}$ is $n^2$, and equality can only hold with a trace inner product value of $\al = 1/(n+1)$. Examples of tightness have been found for several small values of $n$ and are conjectured to exist for every $n$. These equiangular lines are sometimes called \defn{symmetric informationally complete POVMs} in the quantum information literature: see \cite{rbsc} for more details or \cite{kha} for recent results. Another important example in $\gras{1}{n}$ is the relative bound (Corollary \ref{cor:twobound}) with inner product values of $\al = 0$ and $\be = 1/n$. The upper bound for the size of an $\{0,1/n\}$-code is $n(n+1)$, and when equality is achieved we have what is known as a \defn{maximal set of mutually unbiased bases}. Constructions achieving the bound are known when $n$ is a prime power; see \cite{gr1} for some constructions and \cite{rs1} for applications to quantum information.

In the case $m = n/2$, if $a$ is in $\gras{m}{n}$, then its orthogonal complement $a^{\perp}$ is also in $\gras{m}{n}$, and $a$ and $a^{\perp}$ have a trace inner product of $0$. Here again, such subspaces have applications in quantum state tomography; more details will be found in \cite{grr}. If $S$ is a $\{0,n/4\}$-code in $\gras{n/2}{n}$, then by the relative bound (Corollary~\ref{cor:twobound}), $S$ has size at most $2(n^2-1)$. In these case we may assume that both $a$ and $a^{\perp}$ are in $S$, because if $a$ and $b$ have a trace inner product of $n/4$, then so do $a^{\perp}$ and $b$. The following construction, due to Martin R\"otteler, demonstrates that Corollary~\ref{cor:twobound} is tight when $n$ is a power of $2$.
\begin{theorem}
\label{thm:pauli}
Let $X_1,\ldots,X_{n^2-1}$ be the Pauli matrices of order $n = 2^k$, and let 
\[
M_i := \frac{1}{2}(I + X_i).
\]
Then $\cup_{i=1}^{n^2-1} \{M_i,I-M_i\}$ is the set of projection matrices for a $\{0,n/4\}$-code of size $2(n^2-1)$ in $\gras{n/2}{n}$. 
\end{theorem}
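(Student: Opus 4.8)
The plan is to verify the three assertions of the statement directly: that every listed matrix is an orthogonal projection onto an $(n/2)$-dimensional subspace, that the inner products $\tr(P_aP_b)$ between distinct members take only the values $0$ and $n/4$, and that there are exactly $2(n^2-1)$ of them. Everything rests on the standard algebra of the Pauli basis, which I would record first. Writing each $X_i$ as a tensor product of $k$ single-qubit Paulis and using $\tr(A\otimes B) = \tr(A)\tr(B)$ together with $\tr(\sigma_a\sigma_b) = 2\delta_{ab}$, one gets the three facts I need: each $X_i$ is Hermitian with $X_i^2 = I$; each is traceless, $\tr(X_i) = 0$; and they are pairwise orthogonal, $\tr(X_iX_j) = \prod_l \tr(\sigma_{a_l}\sigma_{b_l}) = n\,\delta_{ij}$, which vanishes as soon as the defining index words differ.

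Next I would check membership in $\gras{n/2}{n}$. From $X_i^* = X_i$ and $X_i^2 = I$ one computes $M_i^* = M_i$ and $M_i^2 = \tfrac14(I + 2X_i + X_i^2) = M_i$, so $M_i$ is an orthogonal projection; its rank equals its trace, $\tr(M_i) = \tfrac12(n + 0) = n/2$, so $M_i \in \gras{n/2}{n}$, and the complementary projection $I - M_i = \tfrac12(I - X_i)$ has rank $n/2$ as well.

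The heart of the argument is the inner-product computation, split into four cases according to which of $M_\bullet$ or $I - M_\bullet$ each member is and whether the indices agree. For a complementary pair $M_i$ and $I - M_i$ one has $\tr(M_i(I - M_i)) = \tr(M_i) - \tr(M_i) = 0$. For $i \neq j$, expanding $\tr(M_iM_j) = \tfrac14\tr(I + X_i + X_j + X_iX_j)$ and invoking $\tr(X_i) = \tr(X_j) = \tr(X_iX_j) = 0$ gives $n/4$; the case $\tr((I-M_i)(I-M_j))$ is identical by symmetry, and the mixed case follows immediately from $\tr(M_i(I - M_j)) = \tr(M_i) - \tr(M_iM_j) = n/2 - n/4 = n/4$. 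Hence every inner product between distinct members lies in $\{0, n/4\}$. For the count I would then argue distinctness: $M_i = M_j$ or $I - M_i = I - M_j$ forces $X_i = X_j$, hence $i = j$, while $M_i = I - M_j$ forces $X_i = -X_j$, which is impossible since no Pauli basis element is the negative of another; so the $2(n^2-1)$ matrices are pairwise distinct and $|S| = 2(n^2-1)$.

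There is no genuine obstacle here, only one relation that does the real work and one bookkeeping point to watch. The orthogonality relation $\tr(X_iX_j) = 0$ for $i \neq j$ is what collapses three of the four cases to the single value $n/4$, so it is worth stating cleanly before the case analysis. The bookkeeping point is to confirm that the value $0$ actually occurs (it arises precisely from the complementary pairs), so that $\scr{A} = \{0, n/4\}$ has two distances rather than one; this is exactly the configuration meeting the upper bound $2(n^2-1)$ of Corollary \ref{cor:twobound}, which is what makes the example interesting.
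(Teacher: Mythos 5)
Your proof is correct and complete. Note that the paper itself gives no proof of this theorem: the construction is credited to Martin R\"otteler and stated without verification, so there is no argument of the author's to compare against. Your verification is the natural one --- the single relation $\tr(X_iX_j)=n\,\de_{ij}$ for the (Hermitian, traceless, involutory) Pauli basis drives everything: idempotence and rank $n/2$ of the $M_i$, the four-case inner-product computation yielding only $0$ (complementary pairs) and $n/4$ ($i\neq j$), and pairwise distinctness via linear independence of the $X_i$. You also rightly confirm that both values actually occur, so the set is genuinely a $\{0,n/4\}$-code meeting the bound of Corollary \ref{cor:twobound}. No gaps.
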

More generally, the bound is tight when $n$ is the order of a Hadamard matrix: details of the following construction will appear in in \cite{grr}.

\begin{theorem}
\label{thm:hadspace}
Suppose there is a Hadamard matrix of order $n$. Then there exists a $\{0,n/4\}$-code of size $2(n^2-1)$ in $\gras{n/2}{n}$. 
\end{theorem}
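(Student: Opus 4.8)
The plan is to reduce the whole construction to a single algebraic object — an orthogonal basis of the real space of Hermitian $n\times n$ matrices consisting of the identity together with $n^2-1$ traceless involutions — and then to build that object from the Hadamard matrix, in direct parallel with the way Theorem \ref{thm:pauli} uses the Pauli matrices when $n = 2^k$. Concretely, suppose $X_1,\ldots,X_{n^2-1}$ are Hermitian matrices with $X_i^2 = I$, $\tr(X_i) = 0$, and $\tr(X_iX_j) = 0$ for $i \neq j$. Setting $M_i := \tfrac12(I + X_i)$, a direct computation shows each $M_i$ is a Hermitian idempotent of trace $n/2$, hence the projection of a subspace in $\gras{n/2}{n}$, with orthogonal complement $I - M_i = \tfrac12(I - X_i)$. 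Using $\tr(X_i) = 0$ and $\tr(X_iX_j) = 0$ one checks
\[
\tr(M_i(I - M_i)) = 0, \qquad \tr(M_iM_j) = \tr(M_i(I-M_j)) = \tr((I-M_i)(I-M_j)) = \tfrac{n}{4}
\]
for $i \neq j$, so the $2(n^2-1)$ projections $\{M_i, I - M_i\}$ form a $\{0,n/4\}$-code (orthogonality of the $X_i$ making all of them distinct), and Corollary \ref{cor:twobound}, equivalently the orthoplex bound in Corollary \ref{cor:simortho}, certifies that $2(n^2-1)$ is optimal. Thus everything reduces to exhibiting the involutions $X_i$.

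To construct the $X_i$ from the Hadamard matrix $H$, I would first dispose of the diagonal directions. Normalizing so that the first row of $H$ is all ones, the diagonal sign matrices $D_a := \diag(\text{row } a \text{ of } H)$ are Hermitian involutions with $D_1 = I$, and the Hadamard identity $HH^\top = nI$ says exactly that $\tr(D_aD_b) = n\de_{ab}$ while $\tr(D_a) = 0$ for $a \neq 1$; so $D_2,\ldots,D_n$ already supply $n-1$ of the required involutions. The remaining $n^2-n$ must have off-diagonal support. The natural idea is to generate them by conjugating diagonal involutions by the real orthogonal matrix $F := n^{-1/2}H$ — note that $FD_bF^\top$ is again a traceless Hermitian involution, since $F$ is orthogonal — and, more generally, by overlaying Hadamard sign patterns on the symmetric permutation matrices coming from a $1$-factorization of $K_n$, which exists because $n$ is even. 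The problem is then to select and combine these pieces so that the full family of $n^2-1$ involutions is pairwise trace-orthogonal and spans the traceless Hermitian matrices.

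I expect this final selection to be the main obstacle. When $n = 2^k$ it is trivialized by the group structure of the Paulis: they are closed under multiplication up to phase, so any two commute or anticommute and their products are again Hermitian involutions, which forces orthogonality and completeness automatically. A general Hadamard matrix of order $n$ carries no such group — and, for $n$ not a prime power, not even a complete set of mutually unbiased bases — so the involutions cannot be taken monomial, and the mutual orthogonality of the dense $F$-conjugated pieces must be verified directly against the combinatorics of the chosen $1$-factorization and sign patterns. Carrying out this bookkeeping, and checking that it succeeds for an \emph{arbitrary} Hadamard matrix rather than only the Sylvester type, is exactly the part deferred to \cite{grr}; once the orthogonal involution basis is in hand, the reduction of the first paragraph completes the proof.
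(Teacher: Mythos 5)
Your opening reduction is correct and is the right way to think about the statement: a family $X_1,\ldots,X_{n^2-1}$ of traceless Hermitian involutions with $\tr(X_iX_j)=0$ for $i\neq j$ yields, via $M_i=\tfrac12(I+X_i)$, exactly a $\{0,n/4\}$-code of size $2(n^2-1)$ closed under orthocomplementation, and your trace computations verifying this are all right. (The appeal to Corollary \ref{cor:twobound} for optimality is not needed for the statement, which only asserts existence, but it is harmless.) Note for context that the paper itself gives no proof of this theorem --- it explicitly defers the construction to \cite{grr} --- so there is no argument in the text to compare yours against.

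The genuine gap is that you never produce the involutions, and producing them \emph{is} the theorem. After the (correct) observation that the normalized rows of $H$ give $n-1$ pairwise orthogonal traceless diagonal involutions $D_2,\ldots,D_n$, the remaining $n^2-n$ involutions are only gestured at: conjugating by $F=n^{-1/2}H$ gives a second batch of $n-1$ (with zero diagonal, hence orthogonal to the first batch), but that is $2(n-1)$ out of $n^2-1$, and ``overlaying Hadamard sign patterns on the symmetric permutation matrices of a $1$-factorization of $K_n$'' is a search space, not a construction --- you give no candidate family, no count showing it has the right cardinality, and no verification of pairwise trace-orthogonality. Your own third paragraph concedes that this selection is ``exactly the part deferred to \cite{grr},'' which is to say the proof proposal defers its only substantive step. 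As written, what you have established is the (useful, but easy) equivalence between the theorem and the existence of an orthogonal basis of the Hermitian matrices consisting of $I$ together with $n^2-1$ traceless involutions; the implication from ``Hadamard matrix of order $n$ exists'' to ``such a basis exists'' remains unproved.
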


When the dimension of the complex space is an odd prime power, there is another construction which acheives the relative bound with equality. The following is the complex version of a set of real Grassmannian packings due to Calderbank, Hardin, Rains, Shor, and Sloane \cite{chrss}. For lack of another reference in the complex case, the details are included here.

Let $V := \ff{q}^n$, where $q = p^k$ and $p$ is an odd prime, and let $\{e_v: v \in V\}$ be the standard basis for $\cx^{q^n}$. Then define the $q^n \times q^n$ Pauli matrices
\begin{align*}
& X(a): e_v \mapsto e_{v+a}, \\
& Y(a): e_v \mapsto \om^{\tr(a^Tv)} e_v,
\end{align*}
where $\om$ is a $p$-th primitive root of unity. Note that $e_v$ is an eigenvalue for $Y(a)$ and $e_v^* := \sum_a \om^{\tr(a^Tv)} e_a$ is an eigenvalue for $X(a)$. Define the \defn{extraspecial Pauli group} $E$ to be generated by all $X(a), Y(a)$, and $\om I$; it has $pq^n$ elements, all of the form $\om^iX(a)Y(b)$, for $i \in \zz{p}$, $a,b \in V$. Its center is $Z(E) = \langle \om I \rangle$, and $\overline{E} := E/Z(E)$ is Abelian and therefore a vector space isomorphic to $V^2$ under the mapping 
\[
(a,b) \mapsto X(a)Y(b)/Z(E).
\]
The space $V^2$ has a nondegenerate alternating bilinear form (a \defn{symplectic} form), namely
\[
\ip{(a_1,b_1)}{(a_2,b_2)} := \tr(a_1^Tb_2 - a_2^Tb_1).
\]
It is not difficult to check that two elements in $E$, say $w^iX(a_1)Y(b_1)$ and $w^jX(a_2)Y(b_2)$, commute if and only if their images in $E/Z(E)$ satisfy 
\[
\ip{(a_1,b_1)}{(a_2,b_2)} = 0.
\]
Subspaces on which the symplectic form vanishes are called \defn{totally isotropic}. Therefore, a subspace $\overline{W}$ of $E/Z(E)$ is totally isotropic if and only if its preimage $W$ in $E$ is an Abelian subgroup.

We now use characters of subgroups of $E$ to define elements of $\gras{q^k}{q^n}$. Let $\overline{W}$ be a totally isotropic subspace of $E/Z(E)$ of dimension $n-k$, and let $W$ be the preimage of $\overline{W}$ in $E$. If $\chi: \overline{W} \rightarrow \cx$ is a character of $\overline{W}$, then $\chi': W \rightarrow \cx$ defined by
\[
\chi'(\om^iX(a)Y(b)) = \om^{-i}\chi(X(a)Y(b)/Z(E))
\]
is a character of $W$. Define a matrix 
\[
\Pi_\chi := \frac{1}{|W|} \sum_{g \in W} \chi'(g)g.
\]

\begin{lemma}
\label{lem:isosub}
If $\overline{W}$ is an $(n-k)$-dimensional totally isotropic subspace of $E/Z(E)$ and $\chi$ is a character of $\overline{W}$, then $\Pi_\chi$ is the projection matrix for a $q^k$-dimensional subspace of $\cx^{q^n}$ which is invariant under the action of $W$.
\end{lemma}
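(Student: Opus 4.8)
The plan is to verify three properties of $\Pi_\chi = \frac{1}{\abs{W}}\sum_{g \in W}\chi'(g)\,g$: that it is idempotent, that it is self-adjoint (so that jointly these exhibit it as the matrix of an orthogonal projection onto some subspace), and that the resulting subspace has dimension exactly $q^k$ and is preserved by $W$. Conceptually, $\Pi_\chi$ is the projector onto the $\conj{\chi'}$-isotypic component obtained when the defining representation of $E$ on $\cx^{q^n}$ is restricted to the abelian subgroup $W$; the real content of the lemma is that this component has the predicted dimension.

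First I would check idempotency. Because $\overline{W}$ is totally isotropic, $W$ is abelian and $\chi'$ is genuinely a character, hence a homomorphism, so $\chi'(g)\chi'(h) = \chi'(gh)$; reindexing the double sum by $u = gh$ gives
\[
\Pi_\chi^2 = \frac{1}{\abs{W}^2}\sum_{g,h \in W}\chi'(gh)\,gh = \frac{1}{\abs{W}^2}\sum_{u \in W}\abs{W}\,\chi'(u)\,u = \Pi_\chi.
\]
For self-adjointness, note that every element of $E$ is unitary (each $X(a)$ is a permutation matrix and each $Y(a)$ is diagonal with root-of-unity entries), so $g^* = g^{-1}$ and $\chi'(g^{-1}) = \conj{\chi'(g)}$; reindexing $g \mapsto g^{-1}$ then gives $\Pi_\chi^* = \Pi_\chi$. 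Thus $\Pi_\chi$ is an orthogonal projection, and its rank equals its trace.

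The main step, and the only place where the hypotheses on $\overline{W}$ really enter, is computing $\tr(\Pi_\chi) = \frac{1}{\abs{W}}\sum_{g \in W}\chi'(g)\tr(g)$. Writing $g = \om^iX(a)Y(b)$, one sees that $X(a)Y(b)$ sends $e_v \mapsto \om^{\tr(b^Tv)}e_{v+a}$, so its diagonal is supported on $\{v : v+a = v\}$; hence $\tr(g) = 0$ unless $a = 0$, and when $a = 0$ the orthogonality relation $\sum_{v \in V}\om^{\tr(b^Tv)} = 0$ for $b \neq 0$ forces $\tr(g) = 0$ unless also $b = 0$. Therefore only the central elements $\om^iI \in Z(E)$ contribute. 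Since $\overline{W}$ is a subspace of $E/Z(E)$, its preimage $W$ contains all of $Z(E)$, and $\chi'(\om^iI) = \om^{-i}\chi(1) = \om^{-i}$, so each central term contributes $\om^{-i}\cdot\om^i q^n = q^n$. Using $\abs{W} = p\,q^{n-k}$ this yields $\tr(\Pi_\chi) = p\,q^n/\abs{W} = q^k$, as claimed. I expect this trace computation---establishing the vanishing of the noncentral characters and then getting the normalization exactly right---to be the crux; the dimension count is morally the statement that restricting the unique irreducible faithful representation of $E$ to $W$ spreads evenly over the $q^{n-k}$ characters of $\overline{W}$, each with multiplicity $q^k$.

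Finally, invariance under $W$ follows from the same reindexing: for $h \in W$,
\[
h\,\Pi_\chi = \frac{1}{\abs{W}}\sum_{g \in W}\chi'(g)\,hg = \conj{\chi'(h)}\,\Pi_\chi,
\]
so any vector $v$ with $\Pi_\chi v = v$ satisfies $hv = \conj{\chi'(h)}v$. Hence the range of $\Pi_\chi$ is a $q^k$-dimensional subspace of $\cx^{q^n}$ on which $W$ acts by the character $\conj{\chi'}$, which is exactly the conclusion.
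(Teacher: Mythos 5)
Your proof is correct and follows essentially the same route as the paper's: check that $\Pi_\chi$ is Hermitian and idempotent, observe that only the central elements $\om^i I$ contribute to the trace so that $\rk(\Pi_\chi)=\tr(\Pi_\chi)=q^k$, and note that $h\Pi_\chi=\conj{\chi'(h)}\Pi_\chi$ gives $W$-invariance of the range. You simply fill in the details the paper leaves as ``not difficult to check,'' relying (as the paper does) on the stated fact that $\chi'$ is a character of $W$.
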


\begin{proof} 
It is not difficult to check that $\Pi_\chi$ is Hermitian and $\Pi_\chi^2 = \Pi_\chi$. It is also not difficult to check that $\Pi_\chi v$ is an eigenvector of $g \in W$ for any $v \in \cx^{p^n}$, so $\Pi_\chi$ is a projection matrix for an invariant subspace. The rank of $\Pi_\chi$ is the trace of $\Pi_\chi$, which can be computed as follows, after noting that the only elements of $E$ with non-zero trace are the multiples of the identity:
\[
\tr(\Pi_\chi) = \frac{1}{|W|} \sum_{g = \om^iI} \chi'(g)\tr(g) = \frac{1}{pq^{n-k}} \sum_{i = 1}^p \om^{-i} \tr(\om^iI) = q^k. \eqed
\]
\end{proof} 

In the construction that follows we require the $q$-binomial coefficients, defined as 
\[
\sbinom{n}{m}_q := \frac{(q^n-1)\ldots(q^{n-m+1}-1)}{(q^m-1)\ldots(q-1)}.
\]

\begin{theorem}
\label{thm:isosub}
For $0 \leq k \leq n-1$, let $S$ be the set of all $q^k$-dimensional invariant subspaces of the preimages $W$ of all $(n-k)$-dimensional totally isotropic subspaces $\overline{W}$ of $E/Z(E)$ (as described in Lemma \ref{lem:isosub}). Then $S$ is a $(n-k+1)$-distance set in $\gras{q^k}{q^n}$ of size 
\[
q^{n-k} \sbinom{n}{n-k}_q \; \prod_{i=k+1}^n(q^i + 1).
\]
\end{theorem}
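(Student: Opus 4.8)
The plan is to count two things: how many totally isotropic subspaces $\overline{W}$ of dimension $n-k$ exist in the symplectic space $E/Z(E) \cong V^2$, and how many distinct projections $\Pi_\chi$ each such $\overline{W}$ contributes; then to verify that the set $S$ of all these subspaces is an $(n-k+1)$-distance set. For the counting, I would first recall that $V^2 = \ff{q}^{2n}$ carries a nondegenerate symplectic form, so its maximal totally isotropic subspaces have dimension $n$. The number of totally isotropic subspaces of a given dimension $n-k$ in a $2n$-dimensional symplectic space over $\ff{q}$ is a standard quantity; I expect it to equal $\sbinom{n}{n-k}_q \prod_{i=k+1}^n (q^i+1)$, which would exactly account for the two Gaussian-binomial and product factors in the claimed size. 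The remaining factor $q^{n-k}$ should come from the number of characters $\chi$ of $\overline{W}$ that yield distinct projections: since $\overline{W}$ has dimension $n-k$ over $\ff{q}$ and its character group is $\widehat{\overline{W}} \cong \ff{q}^{\,n-k}$ of order $q^{n-k}$, and distinct characters give orthogonal (hence distinct) projections, each $\overline{W}$ contributes exactly $q^{n-k}$ subspaces to $S$. Multiplying gives the stated cardinality.

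Next I would turn to the distance computation, which is the heart of the theorem. Given two subspaces arising from pairs $(\overline{W},\chi)$ and $(\overline{W}',\chi')$, I need to compute $\tr(\Pi_\chi \Pi_{\chi'})$ and show it takes one of only $n-k+1$ values. The natural approach is to expand
\[
\tr(\Pi_\chi \Pi_{\chi'}) = \frac{1}{|W|\,|W'|} \sum_{g \in W,\; h \in W'} \chi'_{}(g)\,\overline{\chi'_{}(h)}\,\tr(gh),
\]
and then use the fact, already exploited in the proof of Lemma~\ref{lem:isosub}, that $\tr(gh) \neq 0$ only when $gh$ is a scalar multiple of the identity. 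The element $gh$ is a scalar precisely when the images of $g$ and $h$ in $E/Z(E)$ coincide, i.e.\ the surviving terms are indexed by $\overline{W} \cap \overline{W}'$. I expect the trace to reduce to a character sum over this intersection, which vanishes unless $\chi$ and $\chi'$ agree on $\overline{W}\cap\overline{W}'$, and otherwise equals $q^k \cdot |\overline{W}\cap\overline{W}'| / q^{n-k}$ up to normalization. Since $\dim(\overline{W}\cap\overline{W}')$ ranges over the integers from $0$ to $n-k$, this yields at most $n-k+1$ distinct inner product values, establishing the $(n-k+1)$-distance property.

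The main obstacle will be the careful bookkeeping in the trace computation: tracking the scalar phases $\om^i$ attached to each group element, ensuring the characters $\chi'$ on $W$ and $\chi''$ on $W'$ are compatibly normalized so that the cross terms collapse correctly, and confirming that the surviving character sum over $\overline{W}\cap\overline{W}'$ is either zero or a clean power of $q$. In particular I must verify that when $\overline{W}=\overline{W}'$ but $\chi \neq \chi'$, the two projections are genuinely orthogonal (inner product $0$), justifying the count of $q^{n-k}$ distinct subspaces per isotropic subspace; this is exactly the case $\dim(\overline{W}\cap\overline{W}') = n-k$ with disagreeing characters, where the character sum must vanish. A secondary subtlety is confirming that the symplectic counting formula for isotropic subspaces matches the stated product; I would derive it by the standard recursive argument (choosing isotropic vectors one at a time, each lying in the radical of the span of those already chosen) or cite the symplectic group order, and check the factors align with $\sbinom{n}{n-k}_q \prod_{i=k+1}^n (q^i+1)$.
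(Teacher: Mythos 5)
Your proposal follows essentially the same route as the paper: expand $\tr(\Pi_\chi\Pi_{\chi'})$ as a double sum over $W\times W'$, observe that only terms with $gh$ a scalar multiple of the identity survive so the sum collapses to a character sum over (the preimage of) $\overline{W}\cap\overline{W}'$ that is either zero or proportional to $|\overline{W}\cap\overline{W}'|$, and then multiply the standard count of $(n-k)$-dimensional totally isotropic subspaces of a $2n$-dimensional symplectic space by the $q^{n-k}$ characters per subspace. The paper's proof does exactly this (citing the isotropic-subspace count rather than rederiving it), so your plan is correct and matches it.
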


\begin{proof}
For $j \in \{1,2\}$, let $\overline{W}_j$ be an isotropic subspace of $E/Z(E)$, let $W_j$ be its Abelian preimage in $E$, let $\chi_j$ be a character of $\overline{W}_j$, and let $\Pi_j := \Pi_{\chi_j}$ as in Lemma \ref{lem:isosub}. Then 
\begin{align*}
\tr(\Pi_1\Pi_2) &= \frac{1}{|W_1||W_2|} \sum_{g_1 \in W_1}\sum_{W_2 \in S_2} \chi_1'(g_1)\chi'_2(g_2) \tr(g_1g_2) \\
& = \frac{1}{|W_1||W_2|} \sum_{g_1 \in W_1 \cap W_2}\sum_{g_2 = \om^ig_1^{-1}} \chi_1'(g_1)\chi'_2(g_2) \tr(\om^iI) \\
& = \frac{pq^n|W_1 \cap W_2|}{|W_1||W_2|} \text{ (or $0$, depending on $\chi'_1$ and $\chi'_2$}) \\
& = \frac{q^n|\overline{W_1 \cap W_2}|}{|\overline{W_1}||\overline{W_2}|} \text{ (or } 0). 
\end{align*}
Furthermore, any two distinct invariant subspaces from the same isotropic $\overline{W_j}$ are orthogonal. If $\overline{W_1} \neq \overline{W_2}$, then $\dim(\overline{W_1 \cap W_2}) \in \{0,1, \ldots, n-k-1\}$ and so $|\overline{W_1 \cap W_2}|$ takes $n-k$ possible values. It follows that $S$ is a $(n-k+1)$-distance set. To find the size of $S$, first note that the number of isotropic subspaces of dimension $n-k$ is (see \cite[Lemma 9.4.1]{bcn})
\[
\sbinom{n}{n-k}_q \; \prod_{i=k+1}^n(q^i + 1)
\]
and then note that each isotropic subspace produces $q^{n-k}$ invariant subspaces. \qed
\end{proof}

In the case $k = n-1$, Theorem \ref{thm:isosub} produces a $2$-distance set in $\gras{q^{n-1}}{q^n}$ of size $\frac{q(q^{2n}-1)}{q-1}$. The inner product values that occur are $\al = 0$ and $\be = q^{n-2}$: this construction acheives equality in the relative bound (Corollary \ref{cor:twobound}). In his thesis, Zauner \cite{zau} has a construction which has these same parameters (in fact, Zauner's construction is more general, as it also allows $q$ to be an even prime power). In the case $k = n-2$, we get a $3$-distance set in $\gras{q^{n-2}}{q^n}$ of size $\frac{q^2(q^{2n}-1)(q^{2n-2}-1)}{(q^2-1)(q-1)}$, with inner product values $\al = 0$, $\be = q^{n-4}$, and $\ga = q^{n-3}$.

There are many open questions regarding whether or not tightness in the bounds can be achieved; in particular, it is not known if there are any examples of subspaces achieving equality in the absolute bound (Corollary \ref{cor:abscodebnd1}) for $m > 1$. The smallest nontrivial case is a set of $16$ subspaces of dimension $2$ in $\cx^4$, with an inner product value of $\al = 14/15$. 

\section{Designs}
\label{sec:designs}

In this section, we introduce the concept of a \defn{complex Grassmannian $2$-design}. We give lower bounds for the size of a $t$-design and indicate the relationship between designs and codes.

Recall that $H_t(m,n)$ is the direct sum of the irreducible representations $H_\mu$ of $U(n)$ containing the zonal polynomials $Z_{\mu,a}$, where $\mu$ is an integer partition of size at most $t$ and length at most $m$. $H_t(m,n)$ may also be thought of as the symmetric polynomials of degree at most $t$ in the principle angles of pairs of subspaces in $\gras{m}{n}$. Since the zonal orthogonal polynomials $Z_{\mu,a}$ (with $\abs{\mu} \leq t$ and $\len(\mu) \leq m$) span $H_t(m,n)$ and are contained in $\Hom_t(n)$, it follows that $H_t(m,n)$ is a subspace of $\Hom_t(n)$. 

We call a finite subset $S \sbs \gras{m}{n}$ a \defn{$t$-design} if, for every polynomial $f$ in $H_t(m,n)$, 
\[
\frac{1}{|S|}\sum_{a \in S} f(a) = \int_{\gras{m}{n}} f(c) \; dc.
\]
In other words, the average of $f$ over $S$ is the same as the average of $f$ over the entire Grassmannian space. Recall that the average of $f$ over $\gras{m}{n}$ can be written as $\ip{1}{f}$: with this in mind we define an inner product for functions on $S$ as follows:
\[
\ip{f}{g}_S := \frac{1}{|S|} \sum_{a \in S} \conj{f(a)}g(a).
\]
Then $S$ is a $t$-design if $\ip{1}{f} = \ip{1}{f}_S$ for every $f \in H_t(m,n)$.
Equivalently, the zonal orthogonal polynomials $Z_{\mu,a}$ span $H_\mu$, so $S$ is a $t$-design if every $Z_{\mu,a}$ has the same averages over $S$ and $\gras{m}{n}$, where $\mu$ is a partition of at most $t$ into at most $m$ parts.

By way of example, consider Theorem \ref{thm:relcodebnd}. If $f = \sum_{\mu}c_{\mu} g_{\mu}$ and $c_{\mu} > 0$ for every $\abs{\mu} \leq t$, then equality in Theorem \ref{thm:relcodebnd} implies that $S$ is a $t$-design. 

For the purposes of quantum tomography applications, $1$- and $2$-designs play a special role (see \cite{grr}, as well as \cite{sco}). In those cases, there is a more explicit description of a $t$-design. 

\begin{lemma}
\label{lem:equivdes}
Let $S$ be a finite subset of $\gras{m}{n}$. Then $S$ is a $1$-design if and only if
\[
\frac{1}{|S|} \sum_{a\in S} P_a = \int_{\gras{m}{n}} P_a \; da = \frac{m}{n}I.
\]
Moreover, $S$ is a $2$-design if and only if 
\begin{equation}
\label{eqn:tswap}
\frac{1}{|S|} \sum_{a\in S} P_a \otimes P_a = \int_{\gras{m}{n}} P_a \otimes P_a \; da.
\end{equation}
\end{lemma}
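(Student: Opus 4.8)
The plan is to reduce each equivalence to an identification of $\Hom_t(n)$ with $H_t(m,n)$, together with the observation that being a $t$-design means matching $t$-th moments. I would begin with $t=1$. The entries $(P_a)_{ij}$ span $\Hom_1(n)$, and $(P_a)_{ij} = \tr(E_{ji}P_a)$ for the matrix units $E_{ji}$, so $\Hom_1(n)$ is the span of the functions $a\mapsto\tr(M P_a)$ as $M$ ranges over all $n\times n$ complex matrices. These functions are linearly independent because the rank-$m$ projections $P_a$ span the whole space of $n\times n$ matrices (they span the real space of Hermitian matrices, whose traceless part is an irreducible $U(n)$-module generated by any $P_a - (m/n)I$), so $\dim\Hom_1(n) = n^2 = \dim H_1(m,n)$; with the known inclusion $H_1(m,n)\subseteq\Hom_1(n)$ this yields $\Hom_1(n) = H_1(m,n)$.

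Granting this, $S$ is a $1$-design exactly when $\tr\!\big(M\,\tfrac1{\abs S}\sum_{a\in S}P_a\big) = \tr\!\big(M\int P_a\,da\big)$ for every $M$, which is equivalent to $\tfrac1{\abs S}\sum_{a\in S}P_a = \int P_a\,da$. To identify the integral I would use invariance of the Haar measure: $U\big(\int P_a\,da\big)U^* = \int UP_aU^*\,da = \int P_{Ua}\,da = \int P_a\,da$, so the integral commutes with all of $U(n)$ and is therefore scalar by Schur's lemma applied to the standard representation; taking traces forces the scalar to be $m/n$. This settles the first equivalence.

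For $t=2$ the same scheme applies with $P_a\otimes P_a$ in place of $P_a$. The functions $a\mapsto\tr\big(R(P_a\otimes P_a)\big)$, as $R$ ranges over operators on $\cx^n\otimes\cx^n$, are precisely the degree-$2$ polynomials in the entries of $P_a$ and thus span $\Hom_2(n)$. Hence $\tfrac1{\abs S}\sum_{a\in S}P_a\otimes P_a = \int P_a\otimes P_a\,da$ holds if and only if the $S$-average and the true average agree on all of $\Hom_2(n)$. The implication ($\Leftarrow$) is then immediate from $H_2(m,n)\subseteq\Hom_2(n)$, which is already known. For ($\Rightarrow$) I would need the reverse inclusion $\Hom_2(n)\subseteq H_2(m,n)$, so that matching averages on $H_2(m,n)$ forces matching on all of $\Hom_2(n)$.

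That reverse inclusion is the main obstacle, and it is where the representation theory enters. A priori $\Hom_2(n)$ could contain an irreducible $H_\nu$ with $\abs\nu>2$, which a $2$-design need not average to zero. To exclude this I would write $P_a = M_aM_a^*$ with $M_a$ an $n\times m$ matrix of orthonormal columns, so each entry of $P_a$ is bidegree $(1,1)$ in $(M_a,\conj{M_a})$ and every element of $\Hom_2(n)$ is bidegree $(2,2)$. Since $L^2(\gras{m}{n})$ is multiplicity-free and $\Hom_2(n)$ is $U(n)$-invariant, $\Hom_2(n)$ is a direct sum of distinct $H_\nu$; the bidegree bound forces $\abs\nu\le2$ for each, while length $\le m$ is automatic on $\gras{m}{n}$, so $\Hom_2(n)\subseteq H_{(0)}\oplus H_{(1)}\oplus H_{(2)}\oplus H_{(1,1)} = H_2(m,n)$ (equality then follows by comparing dimensions via Weyl's formula). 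With $\Hom_2(n)=H_2(m,n)$ in hand, the $2$-design condition and the second-moment condition coincide, completing the proof. Everything outside this bidegree/multiplicity-free step is linear algebra.
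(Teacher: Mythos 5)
Your proposal is correct, and its skeleton is the same as the paper's: both reduce the lemma to the identity $H_t(m,n)=\Hom_t(n)$ for $t\in\{1,2\}$ and then observe that the entries of $P_a$ (resp.\ $P_a\otimes P_a$) span $\Hom_1$ (resp.\ $\Hom_2$), so that matching averages on $H_t$ is the same as matching the $t$-th moment. The difference is in how the key identity $\Hom_2=H_2$ is established. The paper argues by dimension count in the opposite direction: starting from the known inclusion $H_2\subseteq\Hom_2$, it shows $\dim\Hom_2\leq\binom{n^2+1}{2}-n^2=\binom{n^2}{2}=\dim H_2$ by exhibiting $n^2$ linearly independent quadratic relations $m\tr(AP_a^2)-\tr(P_a)\tr(AP_a)=0$ that hold on projection matrices. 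You instead prove the reverse inclusion $\Hom_2\subseteq H_2$ directly: $\Hom_2$ is $U(n)$-invariant, hence (by multiplicity-freeness) a sum of distinct $H_\nu$'s, and the bidegree-$(2,2)$ bound in $(M_a,\conj{M_a})$ caps the positive part of any occurring highest weight $(\nu,0,\ldots,0,-\breve\nu)$ at $2$, forcing $\abs{\nu}\leq 2$. Both are valid; yours leans on representation-theoretic machinery the paper has already built (and scales transparently to $\Hom_k\subseteq H_k$ for all $k$, a fact the paper asserts elsewhere without proof), while the paper's is more elementary and hands you $\dim\Hom_2=\binom{n^2}{2}$ explicitly, at the cost of having to verify the linear independence of the $n^2$ relations. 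Two small remarks: once you have both inclusions, equality is immediate, so your parenthetical appeal to Weyl's formula is superfluous; and your displayed decomposition $H_{(0)}\oplus H_{(1)}\oplus H_{(2)}\oplus H_{(1,1)}$ tacitly assumes $m>1$ (as does the paper's dimension count), though your ``length $\leq m$ is automatic'' clause already covers the $m=1$ case correctly.
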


Before proving Lemma \ref{lem:equivdes}, we note the integral on the RHS of equation \eqref{eqn:tswap} can be evaluated explicitly. Writing $P_a = \sum_{i=1}^m a_ia_i^*$ for some orthonormal basis $\{a_i\}$ of $a$, and letting $T$ denote the ``swap" operator $T: e_i \otimes e_j \mapsto e_j \otimes e_i$, the integral is obtained from Lemma 5.3 of \cite{rs1}:
\[
\int_{\gras{m}{n}} P_a \otimes P_a \; da = \frac{m}{n(n^2-1)}\left[(nm-1)I + (n-m)T\right]. 
\]

\begin{proof}
We prove the lemma by showing that $H_t = \Hom_t$ for $t \in \{1,2\}$; the result then follows by considering the polynomials of the form $a \mapsto (P_a)_{ij}$ in $\Hom_1$ and $a \mapsto (P_a)_{ij}(P_a)_{kl}$ in $\Hom_2$.

Recall that $H_t$ is contained in $\Hom_t$, so it suffices to show that the dimensions of the spaces are equal. When $t = 1$, we have $\dim(H_1) = \dim(\Hom_1) = n^2$, so $H_1 = \Hom_1$. When $t = 2$, recall that $\dim(H_2) = \binom{n^2}{2}$ (assuming $m > 1$), and the space of homogeneous degree-$2$ polynomials on the coordinates of $n \times n$ matrices has dimension $\binom{n^2+1}{2}$. However, $\Hom_2$ is the space of degree-$2$ polynomials on projection matrices, not general matrices. If $P_a$ is a projection matrix, then the degree-$2$ polynomial
\[
P_a \mapsto m\tr(AP_a^2) - \tr(P_a)\tr(AP_a)
\]
is identically zero for every $A$. There are $n^2$ linearly independent polynomials of that form for general $n \times n$ matrices; therefore,
\[
\dim(\Hom_2) = \binom{n^2+1}{2} - n^2 = \binom{n^2}{2}.
\]
Thus $H_2 = \Hom_2$. \qed
\end{proof}

We now consider bounds for $t$-designs. The following is the \defn{absolute bound}.

\begin{lemma}
If $S$ is a $t$-design, then
\[
|S| \geq \dim(H_{\floor{t/2}}(m,n)).
\]
\end{lemma}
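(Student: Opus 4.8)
The plan is to mimic the structure of the absolute bound for codes (Theorem~\ref{thm:abscodebnd} and Corollary~\ref{cor:abscodebnd}), but to exploit the defining property of a $t$-design in reverse. The key idea is that the functions $f \in H_{\floor{t/2}}(m,n)$, when evaluated on a $t$-design $S$, are rich enough to be linearly independent as functions on $S$; this forces $\abs{S}$ to be at least $\dim(H_{\floor{t/2}})$. Let me set $s := \floor{t/2}$, so that products of two functions in $H_s$ land in $H_{2s} \subseteq H_t$, since the excerpt notes that if $g \in H_k$ and $h \in H_{k'}$ then $gh \in H_{k+k'}$.

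First I would consider the restriction map $\phi : H_s(m,n) \to \mathbb{C}^S$ that sends a polynomial $f$ to its vector of values $(f(a))_{a \in S}$. The conclusion $\abs{S} \geq \dim(H_s)$ is equivalent to showing that $\phi$ is injective, i.e.\ that no nonzero $f \in H_s$ vanishes identically on $S$. To prove injectivity, suppose $f \in H_s$ vanishes on all of $S$. The crucial step is to use the inner product $\ip{\cdot}{\cdot}_S$ on functions on $S$ defined in the excerpt and compare it with the Grassmannian inner product $\ip{\cdot}{\cdot}$. Since $\conj{f}f = \abs{f}^2$ is a product of a function in $H_s$ with its conjugate (also in $H_s$, as $H_s$ is closed under conjugation because the $Z_{\mu,a}$ span it and are real-symmetric), the product $\abs{f}^2$ lies in $H_{2s} \subseteq H_t$. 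Therefore the $t$-design property applies to $\abs{f}^2$, giving
\[
\ip{f}{f} = \int_{\gras{m}{n}} \abs{f(c)}^2 \, dc = \frac{1}{\abs{S}}\sum_{a\in S}\abs{f(a)}^2 = \ip{f}{f}_S.
\]
If $f$ vanishes on $S$, the right-hand side is $0$, forcing $\ip{f}{f} = 0$ and hence $f = 0$ in $L^2(\gras{m}{n})$.

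This shows $\phi$ is injective, so $\dim(H_s) = \dim(\phi(H_s)) \leq \dim(\mathbb{C}^S) = \abs{S}$, which is exactly the claimed bound. The main obstacle, and the step I would want to verify carefully, is the closure claim: that $\abs{f}^2$ genuinely lands in $H_t(m,n)$ and not merely in some larger space, so that the $t$-design averaging identity is actually applicable. Concretely, one must confirm both that $H_s$ is closed under complex conjugation (which follows since the zonal orthogonal polynomials $Z_{\mu,a}$ are real and span each $H_\mu$, and the partitions indexing $H_s$ are closed under the operation) and that the product $f \cdot \conj{f}$ of two degree-$s$ symmetric polynomials in the principal angles remains a symmetric polynomial of degree at most $2s \leq t$, landing in the span described by $H_t$. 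Both facts are asserted in Section~\ref{sec:symmetric}, so the argument is essentially a clean application of the design identity to $\abs{f}^2$, with the floor arising precisely because we need $2\floor{t/2} \leq t$.
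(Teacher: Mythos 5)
Your proposal is correct and is essentially the paper's own argument: the paper takes an orthonormal basis $\{e_i\}$ of $H_{\floor{t/2}}$, observes that $\conj{e_i}e_j$ lies in $H_{2\floor{t/2}}\subseteq H_t$, and uses the design identity to conclude the $e_i$ remain orthogonal (hence linearly independent) as functions on $S$; your injectivity-of-restriction formulation via $\ip{f}{f}=\ip{f}{f}_S$ is the same mechanism stated diagonally. The closure fact you flag ($\conj{f}g\in H_{2s}$ for $f,g\in H_s$) is exactly the step the paper also invokes, via the multiplicativity of the $H_k$ and the uniqueness of the decomposition of $L^2(\gras{m}{n})$.
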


\begin{proof}
Let $\{e_1,\ldots,e_N\}$ be an orthonormal basis for $H_{\floor{t/2}}$. Since $e_i$ is a symmetric polynomial in the eigenvalues, so is $\conj{e_i}e_j$. It follows from the unique decomposition of $L^2(\gras{m}{n})$ that $\conj{e_i}e_j$ is in $H_{2\floor{t/2}}$ and therefore in $H_t$. If $S$ is a $t$-design, and $\conj{e_i}e_j$ is in $H_t$, then
\[
\ip{e_i}{e_j} = \ip{1}{\conj{e_i}e_j} = \ip{1}{\conj{e_i}e_j}_S = \ip{e_i}{e_j}_S,
\]
whence it follows that $\{e_1,\ldots,e_l\}$ are orthogonal as functions of $S$ (a space of dimension $|S|$). \qed
\end{proof}

If equality holds, then the basis for $H_{t/2}(m,n)$ is also a basis for the functions on $S$. There is also a \defn{relative bound}.

\begin{theorem}
\label{thm:reldesbnd}
Let $f(x_1,\ldots,x_m) \in \re[x]$ be a symmetric polynomial such that $f = \sum_{\mu}c_{\mu} Z_{\mu}$, where $Z_{\mu}$ is a zonal polynomial for the Grassmanian space, and $c_0 > 0$. Furthermore, suppose $S$ is a $t$-design such that $f_a(b) = f(y_1(a,b),\ldots,y_m(a,b)) \geq 0$ for every $a \neq b$ in $S$, and $c_\mu \leq 0$ for every $\abs{\mu} > t$. Then
\[
|S| \geq \frac{f(1,\ldots,1)}{c_0}.
\]
\end{theorem}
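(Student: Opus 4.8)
The plan is to mirror the proof of the code bound in Theorem~\ref{thm:relcodebnd}, but reverse the direction of the inequalities and exploit the defining property of a $t$-design. First I would form the double sum $\sum_{a,b \in S} f_a(b)$ and split off the diagonal: since $f_a(a) = f(1,\ldots,1)$ and $f_a(b) \geq 0$ for $a \neq b$ by hypothesis, we obtain the lower bound
\[
\sum_{a,b \in S} f_a(b) \geq \sum_{a \in S} f_a(a) = |S|\, f(1,\ldots,1).
\]
This is where the sign assumption $f_a(b) \geq 0$ (rather than $\leq 0$ as in the code case) is used.

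Next I would expand the same double sum using the zonal decomposition $f = \sum_\mu c_\mu Z_\mu$, giving
\[
\sum_{a,b \in S} f_a(b) = \sum_\mu c_\mu \sum_{a,b \in S} Z_\mu(a,b).
\]
The task is then to bound each inner sum from above. For the term $\mu = 0$ we have $Z_0(a,b) = 1$, so $c_0 \sum_{a,b} Z_0(a,b) = c_0 |S|^2$. For the remaining terms I would distinguish two cases by the size of $\mu$. When $\abs{\mu} > t$, the coefficient $c_\mu \leq 0$ by hypothesis, while Lemma~\ref{lem:possum} gives $\sum_{a,b}Z_\mu(a,b) \geq 0$; hence each such term $c_\mu \sum_{a,b} Z_\mu(a,b) \leq 0$ and may simply be discarded from an upper bound. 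The genuinely new ingredient is the case $0 < \abs{\mu} \leq t$: here I would invoke the $t$-design property. Since $S$ is a $t$-design and $\mu$ is a nonzero partition of size at most $t$ and length at most $m$, the average of $Z_{\mu,a}$ over $S$ equals its average over $\gras{m}{n}$, which is $\ip{1}{Z_{\mu,a}} = 0$ by Schur orthogonality of $Z_{\mu,a}$ with the trivial representation. Consequently $\sum_{a \in S} Z_{\mu,a} = 0$ as a function, and therefore $\sum_{a,b \in S} Z_\mu(a,b) = \langle \sum_a Z_{\mu,a}, \sum_b Z_{\mu,b}\rangle = 0$ for each such $\mu$.

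Assembling these, every term with $\mu \neq 0$ contributes at most zero to the upper bound, so
\[
|S|\, f(1,\ldots,1) \leq \sum_{a,b \in S} f_a(b) \leq c_0 |S|^2,
\]
and dividing by $|S|$ yields $f(1,\ldots,1)/c_0 \leq |S|$, as claimed (using $c_0 > 0$).

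The main obstacle, or at least the step requiring the most care, is the treatment of the intermediate range $0 < \abs{\mu} \leq t$: one must be sure that the $t$-design hypothesis really forces $\sum_{a \in S} Z_{\mu,a} = 0$ for exactly these $\mu$, which hinges on $\mu$ having length at most $m$ so that $H_\mu$ genuinely occurs in $L^2(\gras{m}{n})$ and on the vanishing of $\ip{1}{Z_{\mu,a}}$ for nontrivial $\mu$. I would state this vanishing explicitly rather than leave it implicit, since it is the precise point where "$t$-design" does the work that "code" did in the dual argument. Everything else is a sign bookkeeping exercise paralleling Theorem~\ref{thm:relcodebnd}.
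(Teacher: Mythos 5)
Your proposal is correct and follows essentially the same route as the paper's proof: the sign hypothesis bounds the double sum from below by the diagonal, the $t$-design property annihilates the terms with $0 < \abs{\mu} \leq t$ (via $\ip{1}{Z_{\mu,a}}_S = \ip{1}{Z_{\mu,a}} = 0$), Lemma~\ref{lem:possum} together with $c_\mu \leq 0$ discards the terms with $\abs{\mu} > t$, and the $\mu = 0$ term yields $c_0\abs{S}^2$. The only cosmetic difference is that the paper phrases the computation using the averages $\ip{1}{\cdot}_S$ rather than the raw double sum.
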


\begin{proof}
Let $f_a$ be the zonal polynomial of $f$ at $a$, so that 
$f_a(b) \geq 0$ for $b \neq a$. Summing over all $b \in S$,
\[
|S|\ip{1}{f_a}_S \geq f_a(a) = f(1,\ldots,1).
\]
Again averaging over all $a \in S$,
\begin{align*}
f(1,\ldots,1) & \leq \sum_{a \in S} \ip{1}{f_a}_S \\
& = \sum_{a \in S} \sum_\mu c_\mu \ip{1}{Z_{\mu,a}}_S \\
& = \sum_\mu c_\mu \sum_{a \in S} \ip{1}{Z_{\mu,a}}_S.
\end{align*}
Since $S$ is a $t$-design, the inner sum is zero for $\abs{\mu} \leq t$ ($\mu \neq 0$). For $\abs{\mu} > t$, the inner sum is nonnegative (by Lemma \ref{lem:possum}) and $c_\mu \leq 0$. Therefore,
\begin{align*}
f(1) & \leq c_0 \sum_{a \in S} \ip{1}{Z_{0,a}}_S \\
& = c_0 |S|. \eqed
\end{align*} 
\end{proof}

If equality holds, then we have $f(a,b) = 0$ for every $a \neq b$ in $S$. That is, $S$ is an $f$-code. Furthermore, for every $\abs{\mu} > t$, we have either $c_\mu = 0$ or $\sum_{a \in S} Z_{\mu,a} = 0$.

As with classical codes and designs, the case where $S$ is both a $f$-code and a $t$-design is of particular interest, as the size of the set can be determined exactly. Combining Theorems \ref{thm:relcodebnd} and \ref{thm:reldesbnd} gives the following.

\begin{theorem}
\label{thm:absdescode}
Suppose $S$ is an $f$-code for $f = \sum_\mu c_\mu Z_\mu$, where $c_\mu \geq 0$, and $S$ is also a $t$-design for $t \geq \deg(f)$. Then
\[
|S| = \frac{f(1,1,\ldots,1)}{c_0}.
\]
\end{theorem}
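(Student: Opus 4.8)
The plan is to derive the equality by sandwiching $|S|$ between the upper bound of Theorem \ref{thm:relcodebnd} and the lower bound of Theorem \ref{thm:reldesbnd}, both applied to the same polynomial $f = \sum_\mu c_\mu Z_\mu$. The pivotal observation is that the single hypothesis that $S$ is an $f$-code supplies, all at once, the sign conditions on the zonal polynomial values $f_a(b)$ needed by both theorems. Indeed, $S$ being an $f$-code means $f_a(b) := f(y_1(a,b),\ldots,y_m(a,b)) = 0$ for every $a \neq b$ in $S$, and the value $0$ is simultaneously nonpositive (as Theorem \ref{thm:relcodebnd} wants) and nonnegative (as Theorem \ref{thm:reldesbnd} wants). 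Note also that dividing by $c_0$ in the statement implicitly forces $c_0 > 0$, which is the strict positivity both theorems require.

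First I would invoke Theorem \ref{thm:relcodebnd}. Its hypotheses hold directly: $c_\mu \geq 0$ for all $\mu$, $c_0 > 0$, and $f_a(b) = 0 \leq 0$ for every $a \neq b$ in $S$. We therefore obtain the upper bound $|S| \leq f(1,\ldots,1)/c_0$. Next I would invoke Theorem \ref{thm:reldesbnd}, where the only hypothesis needing verification is that $c_\mu \leq 0$ for every $\abs{\mu} > t$. This is exactly where the assumption $t \geq \deg(f)$ enters: by the definition of degree, $c_\mu = 0$ whenever $\abs{\mu} > \deg(f)$, so in particular $c_\mu = 0 \leq 0$ for all $\abs{\mu} > t$. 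Combined with $f_a(b) = 0 \geq 0$ for $a \neq b$, with $c_0 > 0$, and with the hypothesis that $S$ is a $t$-design, Theorem \ref{thm:reldesbnd} yields the lower bound $|S| \geq f(1,\ldots,1)/c_0$.

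Putting the two inequalities together gives $|S| = f(1,\ldots,1)/c_0$, completing the argument. The proof is essentially bookkeeping with no genuine obstacle; the one point meriting care is confirming that $t \geq \deg(f)$ is precisely what converts the global nonnegativity $c_\mu \geq 0$ into the one-sided condition $c_\mu \leq 0$ on the high-degree terms that the design bound demands. (One could additionally remark, though it is not needed for the equality itself, that this forces equality in both underlying bounds, so the conclusions of their equality cases apply.)
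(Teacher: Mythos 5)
Your proof is correct and is exactly the argument the paper intends: the paper states this theorem with only the remark that it follows by ``combining Theorems \ref{thm:relcodebnd} and \ref{thm:reldesbnd},'' and your write-up simply makes explicit the verification that the $f$-code condition $f_a(b)=0$ satisfies both sign hypotheses and that $t \geq \deg(f)$ kills the coefficients $c_\mu$ with $\abs{\mu} > t$. No further comment is needed.
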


Consider the following polynomial in $H_t(m,n)$: 
\[
Z_t := \sum_{\substack{\abs{\mu} \leq t \\ \len(\mu) \leq m}} Z_\mu.
\]
This polynomial satisfies $\ip{Z_{t,a}}{f} = f(a)$ for every $f \in H_t(m,n)$. Taking $f = Z_t$ in Theorem \ref{thm:absdescode}, we get:

\begin{corollary}
If $S$ is a $Z_t$-code and a $2t$-design, then 
\[
|S| = \dim(H_t(m,n)).
\] 
\end{corollary}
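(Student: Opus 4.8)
The plan is to invoke Theorem \ref{thm:absdescode} directly, taking the symmetric polynomial $f$ there to be $Z_t = \sum_{\abs{\mu} \le t,\, \len(\mu) \le m} Z_\mu$ itself. First I would read off the coefficients in the expansion $f = \sum_\mu c_\mu Z_\mu$: here $c_\mu = 1$ whenever $\abs{\mu} \le t$ and $\len(\mu) \le m$, and $c_\mu = 0$ otherwise. In particular every $c_\mu \ge 0$ and $c_0 = 1 > 0$, so the coefficient hypothesis of Theorem \ref{thm:absdescode} is met, and the hypothesis that $S$ is a $Z_t$-code is precisely the statement that $S$ is an $f$-code for this $f$.

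Next I would attend to the degree and design-strength bookkeeping. Since each $Z_\mu$ has degree $\abs{\mu}$ in the principal angles, the highest-degree summands of $Z_t$ are those with $\abs{\mu} = t$, so $\deg(Z_t) = t$. Because $H_{k-1}$ is contained in $H_k$, any $2t$-design is automatically a $t$-design; hence $S$ is a design of strength $2t \ge t = \deg(Z_t)$, which satisfies the design hypothesis of the theorem. With all hypotheses in place, Theorem \ref{thm:absdescode} yields
\[
|S| = \frac{Z_t(1,\ldots,1)}{c_0} = Z_t(1,\ldots,1).
\]

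It then remains to evaluate $Z_t(1,\ldots,1)$, for which I would use the normalization recorded earlier, $Z_\mu(1,\ldots,1) = \dim H_\mu$, and sum over the indexing partitions:
\[
Z_t(1,\ldots,1) = \sum_{\substack{\abs{\mu} \le t \\ \len(\mu) \le m}} Z_\mu(1,\ldots,1) = \sum_{\substack{\abs{\mu} \le t \\ \len(\mu) \le m}} \dim H_\mu = \dim H_t(m,n),
\]
the last equality being the definition $H_t(m,n) = \bigoplus_{\abs{\mu} \le t,\, \len(\mu) \le m} H_\mu$. Combining the two displays gives $|S| = \dim H_t(m,n)$.

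I do not anticipate a genuine obstacle here: once Theorem \ref{thm:absdescode} is available, the argument is pure bookkeeping. The two points that must be confirmed are the innocuous degree identity $\deg(Z_t) = t$ and the normalization $Z_\mu(1,\ldots,1) = \dim H_\mu$. The only conceptual subtlety is the appearance of $2t$ rather than $t$: it is natural because the corollary is really asserting tightness, namely that such an $S$ meets the absolute design bound $|S| \ge \dim H_t(m,n)$ (valid for $2t$-designs) with equality, realized by the meeting of the code bound of Theorem \ref{thm:relcodebnd} and the design bound of Theorem \ref{thm:reldesbnd}.
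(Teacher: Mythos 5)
Your proof is correct and follows exactly the paper's route: the paper likewise obtains this corollary by taking $f = Z_t$ in Theorem \ref{thm:absdescode}, with the coefficient, degree, and normalization checks ($c_\mu \in \{0,1\}$, $c_0 = 1$, $\deg Z_t = t \le 2t$, and $Z_\mu(1,\ldots,1) = \dim H_\mu$) left implicit there but spelled out by you. No discrepancies to report.
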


\begin{theorem}
\label{thm:twothree}
Any two of the following imply the third:
\begin{itemize}
\item $S$ is an $f$-code, where $\deg(f) = t$;
\item $S$ is a $2t$-design;
\item $|S| = \dim(H_t(m,n))$.
\end{itemize}
\end{theorem}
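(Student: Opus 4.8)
The plan is to route all three implications through a single linear-algebraic picture built on the reproducing kernel $Z_t := \sum_{\abs{\mu}\le t,\ \len(\mu)\le m} Z_\mu$ of $H_t(m,n)$, which satisfies $\ip{Z_{t,a}}{g}=g(a)$ for every $g\in H_t$ and $Z_t(1,\ldots,1)=\dim H_t=:N$. Fixing an orthonormal basis $e_1,\ldots,e_N$ of $H_t$ and summing the addition formula (Lemma~\ref{lem:addform}) over $\mu$ gives $Z_t(a,b)=\sum_{i=1}^N \conj{e_i(a)}e_i(b)$. First I would record two reformulations. Since products of functions from $H_t$ span $H_{2t}$ and $\ip{g}{h}=\ip{1}{\conj{g}h}$, the set $S$ is a $2t$-design if and only if $\ip{g}{h}=\ip{g}{h}_S$ for all $g,h\in H_t$; that is, the restriction map $\rho\colon H_t\to\cx^S$, $g\mapsto(g(a))_{a\in S}$, preserves inner products, and in particular is injective. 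On the other hand, an $f$-code with $\deg f=t$ makes the zonal polynomials $f_a$ satisfy $f_a(b)=f(1,\ldots,1)\,\de_{ab}$ on $S$, so $\rho(f_a)$ is a nonzero multiple of the indicator of $a$ and $\rho$ is onto. These are precisely the two absolute bounds: the code gives $\abs{S}\le N$ (Corollary~\ref{cor:abscodebnd}) and the $2t$-design gives $\abs{S}\ge N$ (the absolute bound for designs). Hence \textbf{code} plus \textbf{design} sandwiches $\abs{S}$ to yield \textbf{size}, which disposes of the first implication using nothing but the two bounds.

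The heart of the argument is an equivalence valid whenever $\abs{S}=N$. Form the square $N\times N$ matrix $\Phi=(e_i(a))_{i,a}$; then $(\Phi^*\Phi)_{ab}=Z_t(a,b)$ while $(\Phi\Phi^*)_{ij}=\abs{S}\,\ip{e_j}{e_i}_S$. Because $\Phi$ is square, $\Phi^*\Phi=NI$ holds if and only if $\Phi\Phi^*=NI$; the left-hand condition says $Z_t(a,b)=N\de_{ab}$ on $S$, i.e.\ $S$ is a $Z_t$-code, and the right-hand condition says $\ip{e_i}{e_j}_S=\de_{ij}$, i.e.\ $S$ is a $2t$-design. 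Thus, \emph{under the hypothesis $\abs{S}=N$, being a $Z_t$-code is equivalent to being a $2t$-design}. This single square-matrix equivalence delivers two implications with the distinguished choice $f=Z_t$: \textbf{design} plus \textbf{size} forces the $Z_t$-code property (exhibiting $S$ as an $f$-code with $\deg f=t$), and \textbf{code} plus \textbf{size} with $f=Z_t$ forces the $2t$-design property.

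The remaining subtlety, and the step I expect to be the main obstacle, is to upgrade \textbf{code} plus \textbf{size} from the distinguished kernel code to an \emph{arbitrary} $f$-code of degree $t$: I must show that a degree-$t$ $f$-code meeting the absolute bound $\abs{S}=N$ is automatically a $Z_t$-code, i.e.\ that its occurring inner-product values are exactly the zeros of the reproducing kernel. The mechanism is that maximality pins down the distances. For a one-distance code this is transparent: Corollary~\ref{cor:simortho} (the simplex bound) shows $\abs{S}=N$ forces $\al=m(mn-1)/(n^2-1)$, which is the root of $Z_t$, so an $\{\al\}$-code of maximal size is a $Z_t$-code and the square-matrix equivalence finishes. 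In general I would expand $f=\sum_\mu c_\mu Z_\mu$ and compute $\sum_{a,b\in S}f_a(b)$ two ways, using $f_a(b)=0$ off the diagonal together with Lemma~\ref{lem:possum} term by term, obtaining $\abs{S}f(1,\ldots,1)=c_0\abs{S}^2+\sum_{\mu\neq 0}c_\mu\bigl\lVert\sum_{a\in S} Z_{\mu,a}\bigr\rVert^2$; matching this against equality in the relative bound (Theorem~\ref{thm:relcodebnd}) forces $\sum_{a} Z_{\mu,a}=0$ for the relevant $\mu$, which is the identification of $S$ with the extremal $Z_t$-configuration. The delicate point is the sign and nonnegativity hypotheses needed to invoke the relative bound, so the crux is precisely verifying that tightness in the absolute bound coerces the code into the $Z_t$-code form; once that identification is secured, the square-matrix equivalence closes every remaining case.
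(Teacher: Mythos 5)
Your reductions of the first two implications are sound. Code plus design pins $\abs{S}$ between the two absolute bounds, and your square-matrix observation that $\Phi^*\Phi = NI$ if and only if $\Phi\Phi^* = NI$ for the $N\times N$ evaluation matrix $\Phi = (e_i(a))$ is a clean packaging of the step ``design $+$ size $\Rightarrow$ $Z_t$-code''; it is arguably tidier than the paper's version of that direction, which instead takes $f$ to be the univariate annihilator of the inner-product set of $S$ and argues that its degree is $t$. (Your $Z_t$ is a symmetric polynomial in the principal angles rather than a polynomial in $\tr(P_aP_b)$, but the paper itself uses ``$f$-code'' in this broader sense in Theorem~\ref{thm:absdescode}, so this is only a definitional quibble.)

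The gap is exactly where you predicted it: ``code $+$ size $\Rightarrow$ design'' for an \emph{arbitrary} $f$ of degree $t$. Your proposed mechanism---expanding $f=\sum_\mu c_\mu Z_\mu$, writing $\abs{S}f(1,\dots,1)=c_0\abs{S}^2+\sum_{\mu\neq 0}c_\mu\bigl\lVert\sum_{a\in S} Z_{\mu,a}\bigr\rVert^2$, and invoking equality in Theorem~\ref{thm:relcodebnd}---cannot be completed. For a general degree-$t$ annihilator nothing forces $c_\mu\ge 0$ or $c_0>0$; even granting positivity, the identity only yields $\abs{S}\le f(1,\dots,1)/c_0$, and this threshold equals $\dim H_t$ only when all the $c_\mu$ coincide, i.e.\ only when $f$ is already a multiple of $Z_t$; and even then the vanishing of $\sum_a Z_{\mu,a}$ is obtained only for those $\mu$ with $c_\mu\neq 0$, which is not the full $2t$-design condition. (Your fallback for $t=1$ via Corollary~\ref{cor:simortho} has the same problem in miniature: the simplex bound gives only the lower bound $\al\ge m(mn-1)/(n^2-1)$; the reverse inequality needs a separate argument, e.g.\ expanding the constant function $1$ in the basis $\{f_a\}$ and using $f_a(b)=f(1,\dots,1)\de_{ab}$ to see all coefficients are equal.) The paper closes this direction by a different, positivity-free device: equality in Corollary~\ref{cor:abscodebnd} makes $\{f_a\}_{a\in S}$ a basis of $H_t$, the reproducing property gives $\ip{Z_{t,a}}{f_b}=f_b(a)=f(1,\dots,1)\de_{ab}$, so $\{Z_{t,a}\}$ is the dual basis and each $Z_{t,a}$ is proportional to $f_a$, whence $Z_{t,a}(b)=0$ for $a\neq b$; then $\ip{1}{Z_{t,a}f_b}_S=\ip{1}{Z_{t,a}f_b}$ for all $a,b$, and since the products $Z_{t,a}f_b$ span $H_{2t}$, $S$ is a $2t$-design. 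You should replace your relative-bound step with this dual-basis argument (noting that the proportionality of $Z_{t,a}$ to $f_a$ is the one point the paper itself treats very briefly, and deserves justification).
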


\begin{proof}
Suppose $S$ is a $f$-code with $|S| = \dim(H_t)$. Since equality holds in Corollary \ref{cor:abscodebnd}, the polynomials $f_a$ are a basis for $H_t$. However, we have
\[
\ip{Z_{t,a}}{f_b} = f_b(a) = \begin{cases}
0, & b \neq a; \\
f(1,1,\ldots,1), & b = a. \\
                             \end{cases}
\]
Thus $\{Z_{t,a}\}$ is a dual basis for $H_t$ and each $Z_{t,a}$ is a multiple of $f_{t,a}$. Now consider the averages $\ip{Z_{t,a}}{f_b}_S$: since $f_a(b) = Z_{t,a}(b) = 0$ for $b \neq a$, we get
\[
\ip{Z_{t,a}}{f_b}_S = \begin{cases}
0, & b \neq a; \\
f(1,1,\ldots,1), & b = a. \\
                             \end{cases}
\]
Thus we have
\[
\ip{1}{Z_{t,a}f_b}_S = \ip{\conj{Z_{t,a}}}{f_b}_S = \ip{\conj{Z_{t,a}}}{f_b} = \ip{1}{Z_{t,a}f_b}
\]
for the bases $\{Z_{t,a}\}$ and $\{f_b\}$. But the set $\{Z_{t,a}f_b\}$ spans $H_{2t}(n)$, so $S$ is a $2t$-design. 

Conversely, suppose $S$ is a $2t$-design with $|S| = \dim(H_t)$, and let $f$ annihilate of the angle set of $A$. Since $H_t$ spans the functions on $|S|$, each $f_a$ is in $H_t$ and is therefore a polynomial of degree $t$. Thus $f$ has degree $t$. \qed
\end{proof}

The simplest case of Theorem \ref{thm:twothree} is when $t = 1$: in this case, $S$ is a $1$-distance set and a $2$-design of size $n^2$. Moreover, $S$ is a $Z_1$-code, and $Z_1$ is the annihilator of $\tfrac{m(mn-1)}{n^2-1}$. Thus the inner product between every two distinct subspaces is $\al = \tfrac{m(mn-1)}{n^2-1}$.

\section{Association schemes}
\label{sec:assoc}

As Theorem \ref{thm:twothree} indicates, sets of Grassmannian subspaces which reach equality in the Delsarte bounds have a great deal of structure. In this section, we show that---much like spherical codes and spherical designs---these sets are often endowed with the structure of an association scheme. 

Let $S$ be an $f$-code with a finite number of distinct sets of principal angles $y = (y_1,\ldots,y_m)$. Denote the set of $y$'s that occur by $\scr{Y}$. For each $y \in \scr{Y}$, define a $|S| \times |S|$ matrix as follows:
\[
A_y(a,b) := \begin{cases}
1, & a,b \mbox{ have principal angles } y; \\
0, & \mbox{otherwise.}
           \end{cases}
\]
Each $A_y$ is a symmetric $\{0,1\}$-matrix. Furthermore, each pair $(a,b)$ has some principal angle $y$, so $\sum_{y \in \scr{Y}} A_y = J$, where $J$ is the all-ones matrix. If $y_0 := (1,\ldots,1)$ denotes the trivial principal angles set, then $A_0 := A_{y_0}$ is the identity matrix. We will call the $A_y$ matrices \defn{Schur idempotents}, as they are idempotent under Schur multiplication, defined as follows:
\[
(A \circ B)_{ij} := A_{ij} B_{ij}.
\]
Under certain conditions, these Schur idempotents form an association scheme. 

For each integer partition $\mu$ and corresponding zonal polynomial $Z_\mu$, define an $|S| \times |S|$ matrix as follows:
\[
E_{\mu}(a,b) := \frac{1}{|S|}Z_\mu(a,b).
\]
Each $E_\mu$ is also symmetric and in the span of $\{A_y\}_{y \in \scr{Y}}$:
\[
E_\mu = \frac{1}{|S|}\sum_{y \in \scr{Y}} Z_\mu(y) A_y.
\]
In particular, $E_0$ is a scalar multiple of $J$. When $\{A_y\}_{y \in \scr{Y}}$ forms an association scheme, the matrices $E_\mu$ are the scheme's idempotents.

\begin{lemma}
If $S$ is a $2t$-design, then $\{E_\mu\}_{\abs{\mu} \leq t, \len(\mu) \leq m}$ are a set of orthogonal idempotents.
\label{lem:orthogidem}
\end{lemma}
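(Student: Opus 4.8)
The plan is to show that the matrices $E_\mu$ (for $\abs{\mu} \leq t$, $\len(\mu) \leq m$) are idempotent under ordinary matrix multiplication and mutually orthogonal, i.e.\ $E_\mu E_\nu = \de_{\mu\nu} E_\mu$. The natural route is to compute the matrix product $(E_\mu E_\nu)(a,c)$ directly from the definition $E_\mu(a,b) = \frac{1}{|S|} Z_\mu(a,b)$ and exploit the $2t$-design hypothesis to convert the sum over $b \in S$ into an integral over $\gras{m}{n}$, where Schur orthogonality of the zonal polynomials can be applied.

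First I would write
\[
(E_\mu E_\nu)(a,c) = \frac{1}{|S|^2} \sum_{b \in S} Z_\mu(a,b) Z_\nu(b,c).
\]
The key observation is that $Z_\mu(a,b) = \conj{Z_{\mu,a}(b)}\cdot(\text{real})$ and, since each $Z_{\mu,a}$ lies in $H_\mu$ with $\abs{\mu}\le t$, the product $\conj{Z_{\mu,a}}\,Z_{\nu,c}$ (as a function of $b$) is a product of two functions in $H_t$, hence lies in $H_{2t}(m,n)$. Because $S$ is a $2t$-design, the average of this product over $S$ equals its average over the whole Grassmannian:
\[
\frac{1}{|S|}\sum_{b \in S} \conj{Z_{\mu,a}(b)}\, Z_{\nu,c}(b) = \int_{\gras{m}{n}} \conj{Z_{\mu,a}(b)}\,Z_{\nu,c}(b)\; db = \ip{Z_{\mu,a}}{Z_{\nu,c}}.
\]
By Schur orthogonality (the identity $\ip{Z_{\mu,a}}{Z_{\nu,c}} = \de_{\mu\nu} Z_\mu(a,c)$ recorded before Lemma \ref{lem:possum}), this inner product vanishes unless $\mu = \nu$, in which case it equals $Z_\mu(a,c)$. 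Substituting back gives $(E_\mu E_\nu)(a,c) = \frac{1}{|S|}\de_{\mu\nu} Z_\mu(a,c) = \de_{\mu\nu} E_\mu(a,c)$, which is exactly the idempotent-and-orthogonal relation. Symmetry of each $E_\mu$ is immediate from the symmetry of $Z_\mu(a,b)$ noted earlier.

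The main obstacle I anticipate is the bookkeeping around the complex conjugate and the reality of $Z_\mu$: I must be careful to express $Z_\mu(a,b)$ in the form $\conj{Z_{\mu,a}(b)}$ or $Z_{\mu,a}(b)$ consistently so that the sum over $b$ genuinely matches the $L^2$ inner product $\ip{Z_{\mu,a}}{Z_{\nu,c}}$ rather than some bilinear variant. Since the paper states $Z_\mu(a,b)$ is real and symmetric, this is a routine matter, but it is the one place where a sign or conjugation slip could break the argument. The other point to verify is that $\conj{Z_{\mu,a}}\,Z_{\nu,c}$ really lands in $H_{2t}$: this follows from the remark in Section \ref{sec:symmetric} that a product of a polynomial in $H_k$ with one in $H_{k'}$ lies in $H_{k+k'}$, applied with $k,k' \leq t$, so the design property at level $2t$ is precisely what is needed and no more.
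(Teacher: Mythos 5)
Your proposal is correct and follows essentially the same route as the paper: compute $(E_\mu E_\nu)(a,c)$ as the $S$-average of $\conj{Z_{\mu,a}}Z_{\nu,c}$, note this product lies in $H_{2t}$, invoke the $2t$-design property to replace the average over $S$ by the integral, and finish with the orthogonality relation $\ip{Z_{\mu,a}}{Z_{\nu,c}} = \de_{\mu\nu}Z_\mu(a,c)$. Your extra care about conjugation is handled in the paper simply by the fact that $Z_\mu(a,b)$ is real and symmetric.
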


\begin{proof}
Suppose $\abs{\mu} = i$ and $\abs{\la} = j$, with $i,j \leq t$. Then
\begin{align*}
(E_\mu E_\la)_{a,b} & = \frac{1}{|S|^2}\sum_{c \in S} Z_\mu(a,c)Z_\la(c,b) \\
& = \frac{1}{|S|}\ip{Z_{\mu,a}}{Z_{\la,b}}_S.
\end{align*}
Since $Z_{\mu,a}$ and $Z_{\mu,b}$ are in $H_t$, their product is in $H_{2t}$. Now $S$ is a $2t$-design, so the average of $Z_{\mu,a}Z_{\la,b}$ over $S$ is the same as the average over $\gras{m}{n}$. But
\[
\ip{Z_{\mu,a}}{Z_{\la,b}} = \de_{\la,\mu}Z_\mu(a,b),
\]
and so we find that $E_\mu E_\la = \de_{\la,\mu}E_\mu$. \qed
\end{proof}

More generally, if $\abs{\mu} = i$ and $\abs{\la} = j$, and $S$ is a $(i+j)$-design, then $E_\mu$ and $E_\la$ are orthogonal. 

Now suppose $S$ is a $2t$-design. By the previous lemma $\{E_\mu\}_{\abs{\mu} \leq t}$ are linearly independent, and clearly the matrices $\{A_y\}_{y \in \scr{Y}}$ are also linearly independent. If $|\scr{Y}|$ equals the number of partitions of at most $t$ (into at most $m$ parts), then the span of $\{A_y\}_{y \in \scr{Y}}$ and $\{E_\mu\}_{\abs{\mu} \leq t}$ are the same. Since $\{E_\mu\}_{\abs{\mu} \leq t}$  is closed under multiplication, so too is the span of $\{A_y\}_{y \in \scr{Y}}$, and so we have an association scheme. 

\begin{corollary}
\label{cor:assoc}
Let $S$ be a $2t$-design in $\gras{m}{n}$ with principal angle set $\scr{Y}$. If $|\scr{Y}|$ is equal to the total number of partitions of $0,1,\ldots,t$ into at most $m$ parts, then $\{A_y\}_{y \in \scr{Y}}$ is an association scheme.
\end{corollary}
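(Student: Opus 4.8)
The plan is to show that the real span of $\{A_y\}_{y \in \scr{Y}}$ is closed under ordinary matrix multiplication. Combined with the facts established just before the statement---that $A_0 = I$, that $\sum_{y \in \scr{Y}} A_y = J$, and that each $A_y$ is a symmetric $\{0,1\}$-matrix---this is precisely the Bose--Mesner characterization of a symmetric association scheme (the closure gives the intersection numbers, which are nonnegative integers since each entry of a product $A_yA_{y'}$ counts the number of $c\in S$ with $(a,c)$ of type $y$ and $(c,b)$ of type $y'$). To obtain closure I would identify $\mathrm{span}\{A_y\}$ with $\mathrm{span}\{E_\mu\}_{\abs{\mu}\leq t,\len(\mu)\leq m}$, which by Lemma \ref{lem:orthogidem} is closed under matrix multiplication.

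First I would verify that both spanning sets are linearly independent. The $A_y$ are independent because distinct principal-angle values index matrices with disjoint supports. The $E_\mu$ are independent because Lemma \ref{lem:orthogidem} makes them pairwise-orthogonal idempotents, $E_\mu E_\la = \de_{\mu,\la}E_\mu$, and each is nonzero: its diagonal entries equal $\frac{1}{\abs{S}}Z_\mu(1,\ldots,1) = \frac{1}{\abs{S}}\dim H_\mu > 0$. Hence multiplying any relation $\sum c_\mu E_\mu = 0$ by a fixed $E_\nu$ forces $c_\nu = 0$.

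Next I would match dimensions. The idempotents $E_\mu$ are indexed exactly by the partitions with $\abs{\mu}\leq t$ and $\len(\mu)\leq m$, that is, by the partitions of $0,1,\ldots,t$ into at most $m$ parts, so $\dim\mathrm{span}\{E_\mu\}$ equals that number. By hypothesis this number is also $\abs{\scr{Y}} = \dim\mathrm{span}\{A_y\}$. Since each $E_\mu = \frac{1}{\abs{S}}\sum_{y\in\scr{Y}} Z_\mu(y)A_y$ lies in $\mathrm{span}\{A_y\}$, the inclusion $\mathrm{span}\{E_\mu\}\subseteq\mathrm{span}\{A_y\}$ together with equality of dimensions forces the two spans to coincide.

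Finally, because $\{E_\mu\}$ are orthogonal idempotents their span is closed under matrix multiplication (and commutative), so $\mathrm{span}\{A_y\}$ is too; with $A_0 = I$, $\sum_y A_y = J$, and the $A_y$ symmetric $\{0,1\}$-matrices, this yields a symmetric commutative association scheme. I expect the only delicate point to be the dimension-matching step: one must be careful that the hypothesis on $\abs{\scr{Y}}$ is exactly what promotes the inclusion of spans to an equality (rather than a one-sided bound), after which the closure property and the verification of the association-scheme axioms follow immediately.
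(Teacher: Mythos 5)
Your proposal is correct and follows essentially the same route as the paper: linear independence of the $\{A_y\}$ (disjoint supports) and of the $\{E_\mu\}$ (orthogonal nonzero idempotents via Lemma \ref{lem:orthogidem}), the inclusion $E_\mu \in \mathrm{span}\{A_y\}$, the counting hypothesis to force equality of spans, and then closure under multiplication inherited from the idempotents. The extra details you supply---that $E_\mu \neq 0$ because its diagonal entries are $\frac{1}{|S|}\dim H_\mu > 0$, and the explicit check of the Bose--Mesner axioms---are correct refinements of the argument the paper leaves implicit.
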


\begin{lemma}
Let $S$ be a $2t$-design in $\gras{m}{n}$ with principal angle set $\scr{Y}$ such that $|\scr{Y}|$ is the total number of partitions of $0,1,\ldots,t$ into at most $m$ parts. Then $\{E_\mu\}_{\abs{\mu} \leq t, \len(\mu) \leq m}$ are the idempotents of the scheme $\{A_y\}_{y \in \scr{Y}}$.
\end{lemma}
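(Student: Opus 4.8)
The plan is to combine the two facts already in hand---by Corollary~\ref{cor:assoc} the matrices $\{A_y\}_{y \in \scr{Y}}$ form an association scheme, and by Lemma~\ref{lem:orthogidem} the $\{E_\mu\}$ are orthogonal idempotents---and to identify the $E_\mu$ as precisely the primitive idempotents of that scheme by a dimension count inside the Bose--Mesner algebra. Let $\scr{A}$ be the $\re$-span of the $A_y$, a commutative algebra of dimension $|\scr{Y}|$. Since the scheme is symmetric, the $A_y$ are real symmetric commuting matrices and hence simultaneously diagonalizable over $\re$; thus $\scr{A}$ is semisimple with $\scr{A} \cong \re^{|\scr{Y}|}$, and its minimal (primitive) idempotents number exactly $|\scr{Y}|$.

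Next I would verify that each $E_\mu$ is a nonzero orthogonal idempotent lying in $\scr{A}$. Membership is already established, since $E_\mu = \frac{1}{|S|}\sum_{y \in \scr{Y}} Z_\mu(y) A_y$. Because $S$ is a $2t$-design, Lemma~\ref{lem:orthogidem} gives $E_\mu E_\la = \de_{\mu,\la} E_\mu$, and each $E_\mu$ is nonzero because its diagonal entries equal $Z_\mu(1,\ldots,1)/|S| = \dim H_\mu/|S| \neq 0$.

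Finally I would carry out the count. The index set $\{\mu : \abs{\mu} \leq t, \len(\mu) \leq m\}$ has cardinality equal to the number of partitions of $0,1,\ldots,t$ into at most $m$ parts, which by hypothesis equals $|\scr{Y}| = \dim \scr{A}$. Orthogonal idempotents are linearly independent, so the $E_\mu$ form a basis of $\scr{A}$; transporting along $\scr{A} \cong \re^{|\scr{Y}|}$, every idempotent is a $0$--$1$ vector and orthogonality means disjoint support, so $\dim\scr{A}$ nonzero ones must each be supported on a single coordinate. Hence the $E_\mu$ are exactly the standard basis idempotents of $\re^{|\scr{Y}|}$---the primitive idempotents of the scheme---and in particular $\sum_\mu E_\mu = I$. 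I expect the last step to be the crux: justifying that $\dim\scr{A}$ nonzero orthogonal idempotents in a commutative semisimple algebra must constitute the full set of primitive idempotents. This hinges on the semisimplicity supplied by the symmetric structure of the scheme, after which the support argument is routine.
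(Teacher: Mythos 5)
Your proof is correct, but it takes a genuinely different route from the paper's. You work abstractly: granting Corollary~\ref{cor:assoc} (the $A_y$ form a scheme) and Lemma~\ref{lem:orthogidem} (the $E_\mu$ are pairwise orthogonal idempotents), you note that the Bose--Mesner algebra spanned by the $A_y$ is a commutative unital algebra of real symmetric matrices, hence semisimple and isomorphic to $\re^{|\scr{Y}|}$, and that $|\scr{Y}|$ nonzero pairwise orthogonal idempotents in $\re^{|\scr{Y}|}$ must have singleton supports and so are exactly the primitive idempotents. This is sound; the one step you leave implicit---that a $d$-dimensional commutative unital algebra of simultaneously diagonalizable real symmetric matrices is isomorphic as an algebra to $\re^{d}$---is standard for Bose--Mesner algebras, and your check that each $E_\mu$ is nonzero (positive diagonal) is the right thing to verify. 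The paper argues concretely instead: it inverts the transition matrix $[Z_\mu(y)]$ to produce, for each $y_i \in \scr{Y}$, a polynomial $g_i = \sum_{\abs{\la}\le t} c_{i,\la}Z_\la$ with $g_i(y_j) = \de_{ij}$, writes $(A_iE_\mu)_{a,b} = \ip{g_{i,a}}{Z_{\mu,b}}_S$, invokes the $2t$-design property a second time to replace this average over $S$ by the integral over $\gras{m}{n}$, and then uses Schur orthogonality to conclude that $A_iE_\mu$ is a scalar multiple of $E_\mu$. The paper's route yields more information---the $E_\mu$ appear explicitly as common eigenprojections of the $A_i$, with the scalars $c_{i,\mu}\abs{S}$ giving the eigenvalues of the scheme---while yours is shorter and touches the design hypothesis only through the two results already proved.
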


\begin{proof}
Since $E_\mu = \frac{1}{|S|}\sum_{y \in \scr{Y}} Z_\mu(y) A_y$, we see that the matrix $[Z_\mu(y)]$ is the transition matrix between the two bases of the association scheme and is therefore invertible. It follows that for each $y_i$ in $\scr{Y}$, some linear combination of the rows $Z_\mu$ forms a homogeneous degree-$t$ polynomial $g_i$ such that $g_i(y_j) = \de_{ij}$. (Conversely, if such $g_i$ polynomials exist, then $[Z_\mu(y)]$ is invertible.) Then
\begin{align*}
(A_iE_\mu)_{a,b} & = \frac{1}{|S|}\sum_{c: y(a,c) = y_i} Z_\mu(c,b) \\
& = \ip{g_{i,a}}{Z_{\mu,b}}_S \\
& = \ip{g_{i,a}}{Z_{\mu,b}}.
\end{align*}
Now write $g_i = \sum_{\abs{\la} \leq t} c_{i,\la} Z_\la$, so that
\[
\ip{g_{i,a}}{Z_{\mu,b}} = \sum_{\abs{\la} \leq t} c_{i,\la} \ip{Z_{\la,a}}{Z_{\mu,b}} = c_{i,\mu} Z_\mu(a,b). 
\]
Thus $A_iE_\mu = c_{i,\mu} E_\mu$ for some $c_{i,\mu}$. \qed
\end{proof}

By way of example, let $t = 1$, and suppose $S$ is a $2$-design with only one nontrivial principal angle set (and one trivial one, for a total of two). The number of partitions of at most $1$ is also two ($\mu = 0$ and $\mu = (1)$), so by Corollary \ref{cor:assoc} we have an association scheme. In this case the scheme is the trivial one, namely $\{I, J-I\}$.

As another example of an association scheme obtained from principal angles, consider the collection of subspaces in $\gras{n/2}{n}$ from Theorem \ref{thm:pauli}. This collection has four distinct sets of principal angles: 
\begin{align*}
y & = (1,\ldots,1) \quad (\text{trivial principal angles}), \\
y & = (0,\ldots,0) \quad (\text{angles between $a$ and $a^\perp$}), \\
y & = (\underbrace{1,\ldots,1}_{n/4},\underbrace{0,\ldots,0}_{n/4}), \\ 
y & = (\tfrac{1}{2},\ldots,\tfrac{1}{2}).
\end{align*}
While $|\scr{Y}| = 4$ is the number of partitions of at most $2$ ($\mu = 0$, $\mu = (1)$, $\mu = (1,1)$ and $\mu = (2)$), the hypotheses of Corollary \ref{cor:assoc} are not satisfied because the subspaces do not form a $4$-design. Nevertheless, it is easy to verify computationally that this collection does give a $3$-class association scheme.

We may define a coarser set of relations on an $f$-code $S$ using the sums of principal angles---the inner products of the projection matrices---instead of the principal angles themselves. Let $\scr{A}$ denote the set of nontrivial inner product values that occur in $S$, so $S$ is an $\scr{A}$-code. For $\al \in \scr{A}$ let $A'_\al$ be the $|S| \times |S|$ matrix defined as follows:
\[
A'_\al(a,b) := \begin{cases}
1, & \tr(P_aP_b) = \al, \\
0, & \mbox{otherwise.}
           \end{cases}
\]
Also define $A'_m := I$ for the identity relation. Clearly each $A'_\al$ is in the span of $\{A_y: y \in \scr{Y}\}$; in fact
\[
A'_\al = \sum_{y \in \scr{Y}:\; \sum y_i = \al} A_y.
\]
In particular, $A'_m = A_0 = I$, and if $0$ is in $\scr{A}$, then $A'_0 = A_{(0,\ldots,0)}$. As before, the matrices are Schur idempotents and sum to $J$. Next we need the corresponding idempotents. For each $i \in \{0,\ldots,t\}$, define $E'_i$ as follows:
\[
E'_i := \sum_{\abs{\mu} = i} E_\mu.
\]
This implies that $E'_0 = J/|S|$ and $E'_i(a,b) = (Z_i(a,b) - Z_{i-1}(a,b))/|S|$ for $i > 0$. As in Lemma \ref{lem:orthogidem}, if $S$ is a $2t$-design, then $\{E'_i: i \leq t\}$ is a set of orthogonal idempotents, and if $S$ is a $(2t-1)$-design, then $\{E'_i: i \leq t\}$ are linearly independent.

Clearly $E'_i$ is in the span of $\{A_y: y \in \scr{Y}\}$, since each $E_\mu$ is in that span. But suppose $Z_i(y)$ is the annihilator polynomial of some $i$-distance set, so it is a only function of $\sum_i y_i$: then in fact $E'_i$ is in the span of $\{A'_\al: \al \in \scr{A}\}$. If $Z_i(y)$ is an annihilator for sufficiently many $i$, then $\{E'_i: 0 \leq i \leq t\}$ and $\{A'_\al: \al \in \scr{A}\cup\{m\}\}$ span the same set, and that set is closed under multiplication.

\begin{corollary}
\label{cor:sumassoc}
Let $S$ be a $2t$-design that is also an $\scr{A}$-code in $\gras{m}{n}$. If $|\scr{A}| \leq t$, and $Z_i(y)$ is an annihilator polynomial for each $i \leq t$, then $\{A'_\al: \al \in \scr{A}\cup\{m\}\}$ is an association scheme.
\end{corollary}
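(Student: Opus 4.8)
The plan is to recognize this as a Bose--Mesner algebra problem: I want to show that the span of the coarse Schur idempotents $\{A'_\al : \al \in \scr{A} \cup \{m\}\}$ is closed under ordinary matrix multiplication, since the remaining axioms of an association scheme have already been verified in the discussion preceding the corollary (the $A'_\al$ are symmetric $\{0,1\}$-matrices, partition the pairs so that $\sum_\al A'_\al = J$, and contain the identity as $A'_m$). The strategy is to match this span against the span of the idempotents $E'_i$ by a dimension count, and then to transport closure under multiplication from the idempotent side to the relation side.

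First I would pin down the idempotent side. Since $S$ is a $2t$-design, Lemma~\ref{lem:orthogidem} shows the $E_\mu$ with $\abs{\mu} \leq t$ are orthogonal idempotents, so summing over $\abs{\mu} = i$ gives $E'_i E'_j = \de_{ij}E'_i$ for $0 \le i,j \le t$. Each $E'_i$ is nonzero because its diagonal entries are $\tfrac{1}{\abs{S}}\sum_{\abs{\mu}=i}Z_\mu(1,\ldots,1) = \tfrac{1}{\abs{S}}\sum_{\abs{\mu}=i}\dim H_\mu > 0$. A nonzero family of mutually orthogonal idempotents is linearly independent (apply any vanishing combination $\sum c_iE'_i = 0$ against $E'_j$ to force $c_j = 0$), so $E'_0,\ldots,E'_t$ are $t+1$ linearly independent matrices.

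Next I would invoke the annihilator hypothesis to locate these idempotents inside the coarse span. By assumption each $Z_i$ with $i \le t$ is the annihilator polynomial of an $i$-distance set, hence a function of the single variable $\sum_j y_j = \tr(P_aP_b)$ rather than of the full principal-angle tuple. Therefore $E'_i = \tfrac{1}{\abs{S}}(Z_i - Z_{i-1})$ is constant on each relation and lies in the span of $\{A'_\al : \al \in \scr{A}\cup\{m\}\}$. Now comes the dimension count: these $t+1$ independent matrices sit inside a span generated by at most $\abs{\scr{A}}+1 \le t+1$ matrices, so both spans have dimension exactly $t+1$ (which incidentally forces $\abs{\scr{A}} = t$), and the span of $\{A'_\al\}$ coincides with the span of $\{E'_i\}$. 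Since the latter is closed under matrix multiplication---for $M = \sum a_iE'_i$ and $N = \sum b_iE'_i$ one has $MN = \sum a_ib_i E'_i$ by orthogonality---so is the former, and because $I = A'_m$ lies in the span, expanding $I = \sum_i c_iE'_i$ and multiplying by $E'_j$ yields $c_j = 1$, i.e.\ $\sum_i E'_i = I$, so the $E'_i$ form a complete resolution of the identity. Thus $\{A'_\al : \al \in \scr{A}\cup\{m\}\}$ spans a commutative matrix algebra that is also closed under Schur multiplication (automatic, as the matrices partition the index pairs), which is exactly an association scheme with primitive idempotents $E'_i$.

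I expect the main obstacle to be the bookkeeping around the dimension count and the identification of the two spans, rather than any deep difficulty: I must be careful that the $E'_i$ are genuinely nonzero and orthogonal (hence that all $t+1$ survive as independent matrices), that the annihilator hypothesis is precisely what collapses $Z_i$ to a function of the inner product, and that once the spans agree I have verified \emph{all} the axioms---symmetry, the identity relation, the partition of $J$, and closure under both products---so that the structure genuinely qualifies as an association scheme and not merely a closed subalgebra.
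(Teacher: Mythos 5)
Your proposal is correct and follows essentially the same route as the paper: the paper's argument (given in the paragraph immediately preceding the corollary) is precisely that the $t+1$ orthogonal idempotents $E'_i$ lie in the span of $\{A'_\al : \al \in \scr{A}\cup\{m\}\}$ because the annihilator hypothesis makes each $Z_i$ a function of $\tr(P_aP_b)$ alone, so the two spans coincide by dimension count and closure under multiplication transfers. Your additional checks (nonvanishing of the $E'_i$, the resolution of the identity, and the remaining scheme axioms) are correct fillings-in of details the paper leaves implicit.
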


In fact, these hypotheses can be weakened.

\begin{theorem}
\label{thm:weakscheme}
Let $S$ be a $(2t-2)$-design that is also an $\scr{A}$-code in $\gras{m}{n}$. If $|\scr{A}| = t$, and $Z_i(y)$ is an annihilator for each $0 \leq i \leq t-1$, then $\{A'_\al: \al \in \scr{A}\cup\{m\}\}$ is an association scheme.
\end{theorem}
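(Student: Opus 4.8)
The goal is to prove that $\{A'_\al : \al \in \scr{A} \cup \{m\}\}$ is an association scheme under the weaker hypotheses of a $(2t-2)$-design (rather than $2t$-design) with $|\scr{A}| = t$. Recall that to have an association scheme we need the span of the Schur idempotents $\{A'_\al\}$ to be a commutative algebra under ordinary matrix multiplication, containing $I$ and $J$, and closed under transposition. The transpose and Schur-idempotent conditions are immediate since each $A'_\al$ is a symmetric $\{0,1\}$-matrix and they sum to $J$ with $A'_m = I$. So the entire content is to show the span is closed under matrix multiplication. My plan is to do this by exhibiting a \emph{second} basis of common idempotents $\{E'_i : 0 \le i \le t\}$ for the same span, since the span of a complete set of orthogonal idempotents summing to the identity is automatically a commutative matrix algebra.

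\textbf{Key steps.}
First I would count: there are exactly $t+1$ matrices $A'_\al$ (the $t$ values in $\scr{A}$ plus the identity relation $A'_m$), and $t+1$ candidate idempotents $E'_0, \ldots, E'_t$. The strategy is to prove these two sets span the same $(t+1)$-dimensional space and that the $E'_i$ are genuine orthogonal idempotents. Second, by the hypothesis that $Z_i(y)$ is an annihilator polynomial for $0 \le i \le t-1$, each such $Z_i(y)$ depends only on $\sum_i y_i = \tr(P_aP_b)$, so $E'_i = (Z_i - Z_{i-1})/|S|$ lies in the span of $\{A'_\al\}$ for $i \le t-1$; and since both sets sum appropriately ($\sum_i E'_i$ and $\sum_\al A'_\al$ are each determined), $E'_t$ also lands in that span. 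This shows $\mathrm{span}\{E'_i\} \subseteq \mathrm{span}\{A'_\al\}$, and a dimension count (both have size $t+1$, provided the $E'_i$ are linearly independent) forces equality. Third, the linear independence and orthogonal-idempotent property of the $E'_i$ is where the design hypothesis enters: I would show that a $(2t-2)$-design suffices to prove $E'_i E'_j = \de_{ij} E'_i$ for all $i, j \le t-1$, and that a $(2t-2)$-design still makes $\{E'_0, \ldots, E'_t\}$ linearly independent even though orthogonality of $E'_t$ with itself is not directly available. The $f$-code hypothesis, with $f$ the annihilator of $\scr{A}$ of degree $t$, gives that the $f_a$ span $H_t$ and relates $E'_t$ to the "top" relation.

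\textbf{The main obstacle.}
The delicate point is that we only have a $(2t-2)$-design, not a $2t$-design, so the clean orthogonality argument of Lemma~\ref{lem:orthogidem} (which multiplied two degree-$t$ zonal polynomials and needed their product's average, hence a $2t$-design) is unavailable for the top idempotent $E'_t$. Products $E'_i E'_j$ with $i + j \le 2t-2$ are still controlled by the design, giving orthogonality for the lower idempotents; the issue is establishing the needed relations involving $E'_t = E'_t(a,b)$, which is a degree-$t$ object. The key idea to circumvent this is to use the $\scr{A}$-code structure: since $f_a(b) = 0$ for $a \ne b$ and $f = \sum_\mu c_\mu Z_\mu$ has degree exactly $t$, the matrix $E'_t$ can be re-expressed in the $\{A'_\al\}$ basis directly (because $f$, and hence its top-degree part, is a function of the inner products alone on the code), bypassing the missing design degrees. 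I expect the real work to be verifying that closure under multiplication holds for products that would naively require design degree up to $2t$: one shows that whenever such a product is multiplied out, the terms of degree exceeding $2t-2$ either vanish on the code (by the annihilator condition) or are forced into the known span, so that the $(2t-2)$-design hypothesis is exactly enough. Confirming this cancellation carefully is the heart of the proof.
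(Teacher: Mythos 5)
There is a genuine gap, and it sits exactly where you flag it: the top idempotent. Your plan is to use $\{E'_0,\ldots,E'_t\}$ as the alternative basis, and you correctly observe that the $(2t-2)$-design hypothesis only controls products of total degree at most $2t-2$, so nothing is known about $E'_t$. But the workaround you sketch does not hold up. First, your claim that ``since both sets sum appropriately, $E'_t$ also lands in that span'' is unjustified: $\sum_{i=0}^{t}E'_i$ is $\tfrac{1}{|S|}Z_t(a,b)$ (the reproducing kernel of $H_t$ restricted to $S$), which is not $I$, not $J$, and not otherwise known to lie in $\mathrm{span}\{A'_\al\}$. The hypothesis only makes $Z_i(y)$ an annihilator (hence a function of $\tr(P_aP_b)$ alone) for $i\le t-1$; the degree-$t$ part $Z_t-Z_{t-1}$ genuinely depends on the full multiset of principal angles, so two pairs in $S$ with the same trace inner product can give different values of $E'_t$, and $E'_t$ need not be in the span of the $A'_\al$ at all. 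Second, the proposed ``cancellation of terms of degree exceeding $2t-2$'' is never carried out --- you explicitly defer it as ``the heart of the proof'' --- and it is not clear it can be, precisely because the annihilator $f$ of $\scr{A}$ controls only the values of a degree-$t$ polynomial in $\tr(P_aP_b)$, not the behaviour of the individual $Z_\mu$ with $\abs{\mu}=t$.

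The paper's proof avoids $E'_t$ entirely. It takes the $t+1$ matrices $\{E'_0,\ldots,E'_{t-1}\}\cup\{I\}$: the $E'_i$ with $i\le t-1$ are orthogonal idempotents by the $(2t-2)$-design hypothesis and lie in $\mathrm{span}\{A'_\al\}$ because each $Z_i(y)$, $i\le t-1$, is an annihilator; and $I=A'_m$ is trivially in that span. Linear independence of $I$ from the $E'_i$ follows because a relation $I=\sum_{i<t}c_iE'_i$ would force a polynomial of degree at most $t-1$ in $\tr(P_aP_b)$ to vanish at all $t$ values of $\scr{A}$, a contradiction. A dimension count then gives equality of the two spans, and closure under multiplication is immediate since the $E'_i$ are orthogonal idempotents and $I$ is the identity. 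If you replace your $E'_t$ by $I-\sum_{i=0}^{t-1}E'_i$ you recover exactly this argument; as written, your route does not close.
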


\begin{proof}
Since $S$ is a $2(t-1)$-design, the idempotents $\{E'_i: 0 \leq i \leq t-1\}$ are linearly independent. We claim that $I$ is also linearly independent from $\{E'_i: 0 \leq i \leq t-1\}$. For, if $I = \sum_{i=0}^{t-1} c_iE'_i$, then the off-diagonal entries of $I$ are functions of a polynomial of degree at most $t-1$ in $\sum_j y_j$, namely 
\[
\frac{1}{|S|}\left(c_0 + \sum_{i=1}^{t-1}c_i(Z_i(y)-Z_{i-1}(y)) \right).
\] 
But all off-diagonal entries are $0$, implying that the polynomial has $t$ roots in $\sum_i y_i$, a contradition. So $\{E'_i: 0 \leq i \leq t-1\} \cup \{I\}$ is linearly independent and therefore spans $\{A'_\al: \al \in \scr{A}\cup\{m\}\}$. Since it is closed under multiplication, we have an association scheme. \qed
\end{proof}

By way of example, suppose $t=2$ in Theorem \ref{thm:weakscheme}. Note that $Z_0(y)$ and $Z_1(y)$ are always annihilators. It follows that if $S$ is a $2$-design, and the inner product set $\scr{A} = \{\tr(P_aP_b): a \neq b \in S\}$ contains exactly two distinct values, then $\{A'_\al: \al \in \scr{A}\cup\{m\}\}$ is a $2$-class association scheme. 

\begin{corollary}
Let $S$ be a $(2t-2)$-design and an $\scr{A}$-code in $\gras{m}{n}$ such that $|\scr{A}| = t$ and $Z_i(y)$ is an annihilator for $i \leq t-1$. Then the idempotents of the scheme $\{A'_\al: \al \in \scr{A}\cup\{m\}\}$ are $E'_0,\ldots,E'_{t-1}$, and $J - \sum_{i=0}^{t-1}E'_i$.
\end{corollary}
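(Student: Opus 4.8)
The plan is to realize the listed matrices as a complete orthogonal system of primitive idempotents of the Bose--Mesner algebra $\scr{B}$ of the scheme, and then appeal to uniqueness. By Theorem~\ref{thm:weakscheme} we already know $\{A'_\al : \al \in \scr{A}\cup\{m\}\}$ is an association scheme; its $t+1$ adjacency matrices have pairwise disjoint supports and are nonzero, so they are linearly independent and $\dim\scr{B}=t+1$. The proof of that theorem moreover shows that $\{E'_0,\ldots,E'_{t-1}\}\cup\{I\}$ is linearly independent and spans $\scr{B}$, a fact I will use twice below.

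First I would check that $E'_0,\ldots,E'_{t-1}$ are nonzero, pairwise orthogonal idempotents lying in $\scr{B}$. Since $E'_i=\sum_{\abs{\mu}=i}E_\mu$, and for $i,j\leq t-1$ we have $i+j\leq 2t-2$, the hypothesis that $S$ is a $(2t-2)$-design makes it in particular an $(i+j)$-design; the computation in the proof of Lemma~\ref{lem:orthogidem} then yields $E_\mu E_\la=\de_{\mu,\la}E_\mu$ for all $\abs{\mu}=i$, $\abs{\la}=j$ (the case $\mu=\la$, giving idempotency of $E_\mu$, uses the sharp value $2(t-1)=2t-2$). Summing over $\mu$ and $\la$ gives $E'_iE'_j=\de_{i,j}E'_i$. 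Each $E'_i$ is nonzero, since its diagonal entries equal $(\dim H_i-\dim H_{i-1})/|S|>0$, and because $Z_0,\ldots,Z_{t-1}$ are annihilators by hypothesis, $E'_i$ is a function of $\tr(P_aP_b)$ alone and hence lies in $\scr{B}$.

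Next I would set $F:=I-\sum_{i=0}^{t-1}E'_i$ and verify it completes the family. As the $E'_i$ are orthogonal idempotents, expanding shows $F^2=F$ and $FE'_j=0$ for every $j$, so $\{E'_0,\ldots,E'_{t-1},F\}$ is a system of pairwise orthogonal idempotents summing to $I$; moreover $F\in\scr{B}$, since $I=A'_m\in\scr{B}$. The crucial point is that $F\neq 0$: otherwise $I=\sum_{i=0}^{t-1}E'_i$, contradicting the linear independence of $\{E'_0,\ldots,E'_{t-1},I\}$ from Theorem~\ref{thm:weakscheme}.

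Finally I would close by a dimension count. The algebra $\scr{B}$ is commutative and closed under conjugate transpose, hence semisimple with $\scr{B}\cong\cx^{t+1}$, in which nonzero orthogonal idempotents correspond to disjoint nonempty subsets of the $t+1$ indices. I now possess $t+1$ nonzero pairwise orthogonal idempotents summing to $I$, so their supporting subsets partition the index set into $t+1$ nonempty parts; these must be the $t+1$ singletons, i.e. the primitive idempotents of the scheme. Hence the idempotents are precisely $E'_0,\ldots,E'_{t-1}$ together with $I-\sum_{i=0}^{t-1}E'_i$. The main obstacle is the bookkeeping in the second step---confirming that the reduced strength $(2t-2)$, rather than $2t$, still forces both orthogonality and idempotency---which rests entirely on the sharp inequality $i+j\leq 2t-2$ for $i,j\leq t-1$; everything else is formal algebra inside the semisimple algebra $\scr{B}$.
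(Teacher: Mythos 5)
Your proof is correct, but it takes a genuinely different route from the paper's. The paper argues directly that each $E'_i$ is a common eigenmatrix of the adjacency algebra: for each $\al\in\scr{A}$ it introduces the degree-$(t-1)$ annihilator polynomial $f_\al$ that is $1$ at $\al$ and $0$ on the rest of $\scr{A}$, writes $(A'_\al E'_i)_{a,b}$ as $\ip{f_{\al,a}}{P_{i,b}}_S$ minus a diagonal correction, uses the $(2t-2)$-design property to replace $\ip{\cdot}{\cdot}_S$ by $\ip{\cdot}{\cdot}$, and then invokes Schur orthogonality to conclude $A'_\al E'_i=(c_{\al,i}|S|-f_\al(m))E'_i$; this has the side benefit of producing the eigenvalues of the scheme explicitly. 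You instead bypass the eigenvalue computation entirely: you verify that $E'_0,\ldots,E'_{t-1}$ are nonzero pairwise orthogonal idempotents lying in the Bose--Mesner algebra (correctly locating where the strength $2t-2$ is needed, namely $i+j\le 2t-2$ for $i,j\le t-1$ in the remark following Lemma~\ref{lem:orthogidem}, and where the annihilator hypothesis is needed, namely to place $E'_i$ in the span of the $A'_\al$), adjoin $F=I-\sum_i E'_i$, use the linear independence from the proof of Theorem~\ref{thm:weakscheme} to see $F\neq 0$, and finish by the structure theory of the $(t+1)$-dimensional commutative semisimple algebra $\scr{B}\cong\cx^{t+1}$. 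Both arguments are sound; yours is more structural and arguably cleaner, the paper's is more computational and yields the eigenvalues $\la_{\al,i}$ as a byproduct. One incidental point: you have silently (and correctly) replaced the paper's $J-\sum_{i=0}^{t-1}E'_i$ by $I-\sum_{i=0}^{t-1}E'_i$; since the primitive idempotents must sum to $I$ and $E'_0=J/|S|$, the $J$ in the statement is evidently a typo for $I$.
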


\begin{proof}
Let $f_\al$ denote the annihilator polynomial of $\scr{A}\backslash\{\al_0,\al\}$, normalized so that $f_\al(\al) = 1$. Then $f_\al$ is a polynomial of degree $t-1$ in $\sum_i y_i$, and the corresponding zonal polynomial $f_{\al,a}$ is in $H_{t-1}(n)$. Writing $P_i := Z_i - Z_{i-1} = \sum_{\abs{\mu} = i} Z_\mu$, we have
\begin{align*}
(A'_\al E'_i)_{a,b} & = \frac{1}{|S|}\sum_{\tr(P_aP_c) = \al} P_i(\tr(P_cP_b)) \\
& = \ip{f_{\al,a}}{P_{i,b}}_S - \frac{f_\al(m)}{|S|}P_i(\tr(P_aP_b)) \\
& = \ip{f_{\al,a}}{P_{i,b}} - \frac{f_\al(m)}{|S|}P_i(\tr(P_aP_b)).
\end{align*}
Now decomposing into its degrees as $f_\al = \sum_i c_{\al,i} P_i$, we get
\begin{align*}
(A'_\al E'_i)_{a,b} & = c_{\al,i}\ip{P_{i,a}}{P_{i,b}} - \frac{f_\al(m)}{|S|}P_i(\tr(P_aP_b)) \\
& = c_{\al,i}P_i(\tr(P_aP_b)) - \frac{f_\al(m)}{|S|}P_i(\tr(P_aP_b)) \\
& = (c_{\al,i}|S| - f_\al(m))(E'_i)_{a,b}.
\end{align*}
Thus $A'_\al E'_i = \la_{\al,i}E'_i$ for some constant $\la_{\al,i}$. \qed
\end{proof}

\section{Acknowledgements}

The author would like to thank Martin R\"otteler, Chris Godsil, Bill Martin, and Barry Sanders for their helpful discussions. This work was funded by NSERC and MITACS.

\bibliographystyle{agbib}
\bibliography{quantum}

\end{document}